\numberwithin{equation}{section}
\definecolor{grey}{rgb}{.7,.7,.7}
\definecolor{refkey}{gray}{.45}
\definecolor{labelkey}{gray}{.45}
\newcommand{\xupref}[2]{\hspace{-0.3ex}\stackrel{\eqref{#1}}{#2}} 
\newtheorem{theorem}{Theorem}[section]
\newtheorem{proposition}[theorem]{Proposition}
\newtheorem{lemma}[theorem]{Lemma}
\theoremstyle{remark}
\newtheorem{remark}[theorem]{Remark}
\theoremstyle{definition}
\newtheorem{definition}[theorem]{Definition}
\newtheorem{example}[theorem]{Example}
\newcommand{\e}{\varepsilon}
\newcommand{\N}{\mathbb N}
\newcommand{\R}{\mathbb R}
\newcommand{\locto}{\stackrel{\mathrm{loc}}{\to}}
\newcommand{\dd}{\,\mathrm{d}}
\newcommand{\Hone}{\mathcal{H}^1}
\newcommand{\Hd}{\mathcal{H}^{d-1}}
\renewcommand{\setminus}{\backslash}
\newcommand{\defeq}{\coloneqq}
\newcommand{\per}{\mathcal{P}}
\newcommand{\defi}[2]{\delta(#1,#2)}
\newcommand{\asymm}[2]{#1\triangle#2}
\newcommand{\Asymm}[2]{\Delta(#1,#2)}
\newcommand{\dis}[2]{\mathrm{d}(#1,#2)}
\newcommand{\comp}{\mathfrak{M}}
\newcommand{\Rba}{\overline{R}}
\newcommand{\E}{\mathcal{E}}
\newcommand{\F}{\mathcal{F}}
\newcommand{\G}{\mathcal{G}}
\newcommand{\lens}{L_{m}}
\newcommand{\lensc}{\mathcal{L}_{m}}
\newcommand{\fr}{{\textstyle\frac{\sqrt{3}}{2}}}
\newcommand{\Fun}{\mathscr{F}}
\newcommand{\ba}{\begin{array}}
\newcommand{\ea}{\end{array}}
\newcommand{\bthm}{\begin{theorem}}
\newcommand{\ethm}{\end{theorem}}
\newcommand{\bprop}{\begin{proposition}}
\newcommand{\eprop}{\end{proposition}}
\newcommand{\blemma}{\begin{lemma}}
\newcommand{\elemma}{\end{lemma}}
\newcommand{\bexmpl}{\begin{example}}
\newcommand{\eexmpl}{\end{example}}
\newcommand{\beqn}{\begin{equation}}
\newcommand{\eeqn}{\end{equation}}
\newcommand{\beqns}{\begin{equation*}}
\newcommand{\eeqns}{\end{equation*}}
\newcommand{\supp}{\operatorname{supp}}
\newcommand{\pr}{\prime}
\newcommand{\pt}{\partial}
\renewcommand{\leq}{\leqslant}
\renewcommand{\geq}{\geqslant}
\definecolor{mygreen}{rgb}{0.1,0.75,0.2}
\newcommand{\ssubset}{\subset\!\subset}
\newcounter{myenumi}
\DeclareMathOperator{\id}{Id}
\DeclareMathOperator{\dive}{div}
\DeclareMathOperator*{\dist}{dist}
\newcommand{\etalchar}[1]{$^{#1}$}
\title[A stability inequality for the planar lens partition]{A stability inequality for the planar lens partition}
\author{Marco Bonacini}
\address[Marco Bonacini]{Department of Mathematics, University of Trento, Italy}
\email{marco.bonacini@unitn.it}
\author{Riccardo Cristoferi}
\address[Riccardo Cristoferi]{Department of Mathematics - IMAPP, Radboud University, Nijmegen, The Netherlands}
\email{riccardo.cristoferi@ru.nl}
\author{Ihsan Topaloglu}
\address[Ihsan Topaloglu]{Department of Mathematics and Applied Mathematics, Virginia Commonwealth University, Richmond, VA, USA}
\email{iatopaloglu@vcu.edu}
\date{\today}       
\thanks{This is a post-peer-review, pre-copyedit version of an article published in Proceedings of the Royal Society of Edinburgh: Section A Mathematics. The final authenticated version is available online at: \url{https://doi.org/10.1017/prm.2025.2}.}                                 
\subjclass[2020]{49Q20,49K40,49Q10}
\keywords{Improper partitioning problems, stability, lens partition, selection principle}
\begin{document}

\begin{abstract}
Recently it has been shown that the unique locally perimeter minimizing partitioning of the plane into three regions, where one region has finite area and the other two have infinite measure, is given by the so-called standard lens partition. Here we prove a sharp stability inequality for the standard lens; hence strengthening the local minimality of the lens partition in a quantitative form. As an application of this stability result we consider a nonlocal perturbation of an isoperimetric problem.
\end{abstract}

\maketitle



\section{Introduction}\label{sec:intro}

The goal of the classical cluster problem in $\R^d$ is to find a configuration of $N$ regions with fixed finite $d$-dimensional volumes and an exterior region of infinite volume so that the total surface measure of the interfaces between the regions is minimal (see, for example, \cite[Part~IV]{Maggi}). For $N \leq d + 1$ and for any given collection of $N$ finite volumes, the \textit{standard $N$-bubble} is conjectured to be the unique minimizer (up to isometries) of the cluster problem. If $N=1$, then the problem reduces to the isoperimetric problem. For any given pair of volumes, the double bubble problem (i.e.\ when $N$=2) has been studied extensively: Foisy, Alfaro, Brock, Hodges, and Zimba proved the minimality of the standard double bubble in $\R^2$ \cite{FoiAlfBroHodZim93}; this was extended to $\R^3$ by Hutchings, Morgan, Ritor\'{e}, and Ros \cite{HutMorRitRos02}; to $\R^4$ by Reichardt, Heilmann, Lai, and Spielman \cite{ReiHeiLaiSpi03}, and to any $\R^d$ by Reichardt \cite{Rei08}. For $N=3$ Wichiramala \cite{Wic04} proves that triple bubbles are the isoperimetric clusters in $\R^2$. Recently, Milman and Neeman confirmed the double bubble conjectures for $d \geq 2$, the triple bubble conjectures for $d \geq 3$ and the quadruple bubble conjectures for $d \geq 4$ in \cite{MilNee}.

A variant of the classical cluster problem is to characterize locally isoperimetric $N$-partitions with more than one region having infinite volume. Such partitions divide the space into $N$ regions with prescribed (finite or infinite) volume and locally minimize the surface measure of the interfaces with respect to all compactly supported variations that also preserve the volume of each region. When two (or more) regions have infinite volume the measure of their interface is also infinite. Therefore, for such partitions, one needs to consider locally minimizing configurations in the following sense: For every ball $B_R$ of radius $R$ the perimeter of the partition in the interior of $B_R$ is minimal among all partitions with regions of the same prescribed volumes as the original partition, but whose difference, in the set-theoretic sense, with the corresponding regions of the original partition is compactly contained in $B_R$.

The study of locally isoperimetric partitions with more than one infinite region has only recently been initiated by Alama, Bronsard, and Vriend \cite{AlaBroVri}. They characterize the unique locally isoperimetric partition of the plane into three regions with one region of given fixed area, and the other two having infinite area, as the \textit{standard lens partition}. Novaga, Paolini, and Tortorelli \cite{NovPaoTor} further this study by obtaining a general closure theorem for limits of sequences of locally isoperimetric partitions, showing that they are themselves local minimizers, provided that they have flat interfaces outside some compact set. This enables them to identify several locally isoperimetric partitions in $\R^d$. In two dimensions they prove that any planar locally isoperimetric partition has at most 3 chambers with infinite area. They also give a complete characterization of planar local minimizers in the case the total number of finite and infinite regions does not exceed 4 as either the \textit{lens partition} (1 finite, 2 infinite regions), the \textit{peanut} (2 finite, 2 infinite regions), or the \textit{Reuleaux triangle} (1 finite, 3 infinite regions), where the last two were conjectured to be local minimizers in \cite{AlaBroVri} (see Figure~\ref{fig:intro}). Finally, in \cite{BroNov}, Bronsard and Novack study a partitioning of $\R^d$ into 1 finite and 2 infinite regions where the surface measure between different pairs of regions is computed with respect to some given weights. After establishing that the \textit{standard weighted lens cluster} is locally minimizing under the standard positivity and triangularity conditions on the weights, they also prove its uniqueness under some additional symmetry and growth assumptions on the weights.

\begin{figure}[htbp]
\centering
\begin{subfigure}[c]{.4\linewidth}
    \centering
\begin{tikzpicture}[scale=1.6,line cap=round,line join=round]
    \fill[green!30!white] (0,{sin(30)}) -- ({0.15*cos(60)},{sin(30)+0.15*sin(60)}) arc (60:180:0.15);    
    \begin{scope}
        \fill[blue!15!white] ({cos(30)},0) circle (1);
    \end{scope}
    \begin{scope}
        \clip (0,0) circle (0.5);
        \fill[orange!50!white] ({-cos(30)},0) circle (1);
    \end{scope}    
    \draw[very thick] (0,{-sin(30)}) arc (-30:30:1);
    \draw[very thick] (0,{sin(30)}) arc (90:270:0.5);
    \draw[very thick] (0,{-sin(30)}) arc (-150:150:1);
    \draw [dashed] (-0.5,0) -- ({1+cos(30)},0);
    \draw ({0.15*cos(60)},{sin(30)+0.15*sin(60)}) arc (60:180:0.15);
	\draw [dashed] (0,{sin(30)}) -- (-0.5,{sin(30)});  
	\draw [dashed] (0,{sin(30)}) -- ({0.5*cos(60)},{sin(30)+0.5*sin(60)}); 
	\draw [dashed] (0,{sin(30)}) -- ({0.5*cos(300)},{sin(30)+0.5*sin(300)});
\end{tikzpicture}    
\end{subfigure}\hspace{0.3cm}
\begin{subfigure}[c]{.4\linewidth}
    \centering
\begin{tikzpicture}[scale=2,line cap=round,line join=round]
	\fill[blue!15!white] (-{sin(120)},0) arc (270-120:120-90:1) arc (450-120:90+120:1);
    \fill[green!30!white] ({sin(120)},0) -- ({sin(120)+0.1},0) arc (0:120:0.1) -- ({cos(120)/2+sin(120)},{sin(120)/2});    
	\draw[very thick] (-{sin(120)},0) arc (270-120:120-90:1);
	\draw[very thick] ({sin(120)},0) arc (450-120:90+120:1);	
	\draw[very thick] (-1.2,0) -- (-{sin(120)},0);
	\draw[very thick] ({sin(120)},0) -- (1.2,0);
	\draw[very thick,dashed] (-1.5,0) -- (-1.2,0); 
    \draw[very thick,dashed] (1.2,0) -- (1.5,0);   
	\draw[dashed] (-{sin(120)},0) -- ({sin(120)},0);
    \draw ({sin(120)+0.1},0) arc (0:120:0.1);
	\draw [dashed] ({sin(120)},0) -- ({cos(120)/2+sin(120)},{sin(120)/2}); 
    \draw [dashed] ({sin(120)},0) -- ({cos(120)/2+sin(120)},{-sin(120)/2}); 
\end{tikzpicture}    
\end{subfigure}

\hspace{.7cm}
\begin{subfigure}[b]{.4\linewidth}
    \centering
\begin{tikzpicture}[scale=2.5,line cap=round,line join=round]
    \begin{scope}
        \clip ({-1+sqrt(3)/(2*sqrt(2)*cos(15)},{-1/(2*sqrt(2)*cos(15)}) circle ({1/(sqrt(2)*cos(15)});
        \clip ({-1+sqrt(3)/(2*sqrt(2)*cos(15)},{1/(2*sqrt(2)*cos(15)}) circle ({1/(sqrt(2)*cos(15)});        
        \fill[blue!15!white] (-1,-1) rectangle (0,1);
    \end{scope}
    \begin{scope}
        \clip ({1-sqrt(3)/(2*sqrt(2)*cos(15)},{-1/(2*sqrt(2)*cos(15)}) circle ({1/(sqrt(2)*cos(15)});
        \clip ({1-sqrt(3)/(2*sqrt(2)*cos(15)},{1/(2*sqrt(2)*cos(15)}) circle ({1/(sqrt(2)*cos(15)});        
        \fill[orange!50!white] (0,-1) rectangle (1,1);
    \end{scope}    
    \fill[green!30!white] (1,0) -- (1.1,0) arc (0:120:0.1);
    \fill[green!30!white] (0,{tan(15)}) -- ({0.1*cos(30)},{tan(15)+0.1*sin(30)}) arc (30:150:0.1);
    \draw[very thick] (-1,0) arc (150:60:{1/(sqrt(2)*cos(15)});
    \draw[very thick] (-1,0) arc (210:300:{1/(sqrt(2)*cos(15)});    
    \draw[very thick] (1,0) arc (30:120:{1/(sqrt(2)*cos(15)});        
    \draw[very thick] (1,0) arc (330:240:{1/(sqrt(2)*cos(15)});        
    \draw[very thick] (0,{-tan(15)}) -- (0,{tan(15)});
	\draw[very thick] (-1.2,0) -- (-1,0);
	\draw[very thick] (1,0) -- (1.2,0);
	\draw[very thick, dashed] (-1.5,0) -- (-1.2,0);
    \draw[very thick, dashed] (1.2,0) -- (1.5,0);
    \draw[dashed] (-1,0) -- (1,0);
	\draw [dashed] (1,0) -- ({1+0.4*cos(120)},{0.4*sin(120)}); 
	\draw [dashed] (1,0) -- ({1+0.4*cos(120)},{-0.4*sin(120)}); 
    \draw (1.1,0) arc (0:120:0.1); 
	\draw [dashed] (0,{tan(15)}) -- ({0.4*cos(30)},{tan(15)+0.4*sin(30)}); 
    \draw [dashed] (0,{tan(15)}) -- ({0.4*cos(150)},{tan(15)+0.4*sin(150)}); 
    \draw ({0.1*cos(30)},{tan(15)+0.1*sin(30)}) arc (30:150:0.1);     
\end{tikzpicture}    
\end{subfigure}\hspace{2cm}
\begin{subfigure}[b]{.4\linewidth}
    \centering
\begin{tikzpicture}[scale=1.6,line cap=round,line join=round]
    \begin{scope}
        \clip (1,0) circle ({sqrt(3)});
        \clip ({cos(120)},{sin(120)}) circle ({sqrt(3)});
        \clip ({cos(240)},{sin(240)}) circle ({sqrt(3)});
        \fill[blue!15!white]  (0,0) circle (3);
    \end{scope}
    \fill[green!30!white] (1,0) -- (1.15,0) arc (0:120:0.15) -- ({1+0.7*cos(120)},{0.7*sin(120)});    
    \draw[dashed] (0,0) -- (1,0);
    \draw[very thick] (1,0) -- (1.4,0);
    \draw[very thick,dashed] (1.4,0) -- (1.8,0);
    \draw[dashed] (0,0) -- ({cos(120)},{sin(120)});
    \draw[very thick] ({cos(120)},{sin(120)}) -- ({1.4*cos(120)},{1.4*sin(120)});    
    \draw[very thick,dashed] ({1.4*cos(120)},{1.4*sin(120)}) -- ({1.8*cos(120)},{1.8*sin(120)});        
    \draw[dashed] (0,0) -- ({cos(240)},{sin(240)});             
    \draw[very thick] ({cos(240)},{sin(240)}) -- ({1.4*cos(240)},{1.4*sin(240)});        
    \draw[very thick,dashed] ({1.4*cos(240)},{1.4*sin(240)}) -- ({1.8*cos(240)},{1.8*sin(240)});
	\draw[very thick] (1,0) arc (30:90:{sqrt(3)});    
	\draw[very thick] ({cos(120)},{sin(120)}) arc (150:210:{sqrt(3)});    
	\draw[very thick] ({cos(240)},{sin(240)}) arc (270:330:{sqrt(3)});     
	\draw [dashed] (1,0) -- ({1+0.7*cos(120)},{0.7*sin(120)}); 
	\draw [dashed] (1,0) -- ({1+0.7*cos(120)},{-0.7*sin(120)}); 
    \draw (1.15,0) arc (0:120:0.15); 
\end{tikzpicture}
\end{subfigure}
\caption{Some planar locally isoperimetric partitions: the standard double bubble, the standard lens, the peanut, and the Reuleaux triangle. All the highlighted angles are 120 degree angles.}
\label{fig:intro}
\end{figure}

In this paper we are interested in the stability of locally isoperimetric partitions in the spirit of the quantitative isoperimetric inequality, where a suitable distance of a set from a ball of the same volume is controlled in terms of the difference of the perimeter of the set and the perimeter of the ball (see \cite{Fus15,Mag08} for two excellent reviews). For clusters, the only stability result that we are aware of is by Cicalese, Leonardi, and Maggi \cite{CicLeoMag17} where they obtain the stability of the planar standard double bubble. Closely related is the proof by Caroccia and Maggi in \cite{CarMag16} of a quantitative version of the minimality of the honeycomb tiling of the plane. The approach adopted in both results is based on the \emph{selection principle} devised by Cicalese and Leonardi \cite{CicLeo12}, and utilizes, as an essential tool, an \textit{improved convergence} theorem for bubble clusters by Cicalese, Leonardi and Maggi \cite{CicLeoMag16,LeoMag17}. Similar ideas were also developed independently by Fusco and Morini in \cite{FusMor12}.

We follow a similar strategy and prove in our main result (Theorem~\ref{thm:stability-lens}) the stability of the planar standard lens cluster when the interfaces between regions are weighted equally. The core idea behind the proof strategy is a contradiction argument, where one assumes the existence of a sequence of partitions which violate the stability inequality, and converge locally in $L^1$ to the lens partition. Then one proceeds in two steps. First, the selection principle allows one to replace the previous sequence by a sequence of \textit{quasi-minimizing} partitions (see Definition~\ref{def:quasi-min}), which still violate the stability inequality, and also have better regularity properties. Then the improved convergence theorem yields that these new partitions converge in a stronger sense, and in particular that they are small $C^1$-perturbations of the lens partition. Therefore this reduces the proof of the stability inequality to a class of smooth perturbations of the lens. This is precisely the content of the second step of the proof, which is obtained by a ``Fuglede-type argument''.

We highlight that only in the second step of this strategy we make essential use of the specific structure of the lens partition, whereas the first step is carried out for any planar locally isoperimetric partition (in the case of a single region with infinite volume, this was already established in \cite{CicLeoMag17}). This paves the way for the proof of the stability of other locally isoperimetric partitions such as the peanut and the Reuleaux triangle, which will be the object of future investigation.

Finally, we would like to mention that the standard lens cluster is directly related to the classical problem of finding the equilibrium shapes of liquid drops confined in a half-space in the absence of gravity (see \cite[Chapter 19]{Maggi} as well as \cite{AlaBroVri2}). The stability of minimizing shapes for liquid drops has only recently been established by Pascale and Pozzetta \cite{PasPoz}. Furthermore, in \cite{Pas}, Pascale studies classical capillarity problems with the inclusion of nonlocal repulsion and gravity terms, and obtains the existence and nonexistence of minimizers. In the final section of this paper, we also study a nonlocal perturbation of an isoperimetric problem where perimeter term is related to both capillarity problems and to the partitioning problem studied here. Exploiting the stability result for the standard lens cluster, we show that the minimizers of this nonlocal problem are close (in the $L^1$-sense) to the standard lens cluster in certain parameter regimes.

\medskip
The paper is organized as follows. Section~\ref{sec:main} contains the general formulation of the problem of locally isoperimetric partitions, and the statement of our main result on the stability of the lens partition, Theorem~\ref{thm:stability-lens}. The proof is carried out in two steps as described above in Section~\ref{sec:step2} and Section~\ref{sec:step1}, respectively. Eventually, in Section~\ref{sect:nonlocal-lens} we discuss an application of the stability to a partitioning problem perturbed by a nonlocal interaction.


\section{Definitions and main result}\label{sec:main}


\subsection{Locally isoperimetric partitions: definitions} \label{subsec:def}
We start by fixing the notation and by formulating in any dimension the notion of \emph{locally isoperimetric partitions} introduced in \cite{AlaBroVri}; we follow in particular the presentation in \cite{NovPaoTor}.

Given a measurable set $E\subset\R^d$, $d\geq2$, we denote by $|E|$ its $d$-dimensional Lebesgue measure. The open ball of $\R^d$ of radius $r>0$ centered at $x_0\in\R^d$ is denoted by $B_r(x_0)$, and we simply write $B_r$ when the center is at the origin. If $E$ is a set of locally finite perimeter (in the sense of Caccioppoli--De Giorgi), we denote by $\partial^*E$ its reduced boundary, by $\nu_E$ the measure-theoretic outer unit normal, and by $\per(E;\Omega)=\Hd(\partial^*E\cap\Omega)$ its relative perimeter in a Borel set $\Omega\subset\R^d$, with $\per(E)\defeq\per(E;\R^d)$. For a set of finite perimeter $E\subset\R^d$ we adopt the convention
\begin{equation} \label{eq:topboundary}
\partial E = \bigl\{ x\in\R^d \;:\; 0<|E\cap B_r(x)| < |B_r| \text{ for all }r>0 \bigr\},
\end{equation}
which can be always assumed up to modifying $E$ in a Lebesgue-negligible set, see for instance \cite[Proposition~12.19]{Maggi}. The symmetric difference of two sets $E,F\subset\R^d$ is denoted by $\asymm{E}{F}\defeq (E\setminus F)\cup(F\setminus E)$. Given an open set $\Omega\subset\R^d$, we say that a sequence of measurable sets $(E_n)_{n\in\N}$ converge to a set $E$ in $\Omega$ if $|(\asymm{E_n}{E})\cap\Omega|\to0$, that is, if the characteristic functions $\chi_{E_n}$ converge to $\chi_E$ in $L^1(\Omega)$. We say that $E_n\to E$ locally in $\R^d$ if $E_n\to E$ in $B_R$, for every $R>0$.

\begin{definition}[Partition] \label{def:partition}
A \emph{$N$-partition} of $\R^d$, $N\geq2$, is an $N$-tuple $\E=(\E(1),\ldots,\E(N))$ of sets of locally finite perimeter in $\R^d$ such that $0<|\E(i)|\leq\infty$, $|\E(i)\cap\E(j)|=0$ for all $i,j\in\{1,\ldots,N\}$, $i\neq j$, and $|\R^d\setminus\bigcup_{i=1}^N\E(i)|=0$. 
\end{definition}

We denote the interfaces between the different regions of a partition by
\begin{equation} \label{eq:interfaces}
\E(i,j)\defeq \partial^*\E(i)\cap\partial^*\E(j), \qquad i,j\in\{1,\ldots,N\},
\end{equation}
and the boundary, the reduced boundary, and the singular set of the partition respectively by
\begin{equation} \label{eq:boundary}
\partial\E\defeq \bigcup_{i=1}^N \partial\E(i), \qquad \partial^*\E\defeq \bigcup_{i=1}^N \partial^*\E(i), \qquad \Sigma(\E)\defeq \partial\E\setminus\partial^*\E.
\end{equation}
The \emph{perimeter} of a partition $\E$ relative to a Borel set $\Omega\subset\R^d$ is defined as
\begin{equation} \label{eq:perimeter}
\per(\E;\Omega) \defeq \frac12\sum_{i=1}^N \per(\E(i);\Omega) = \sum_{1\leq i < j \leq N} \Hd(\E(i,j)\cap\Omega).
\end{equation}
We say that a sequence of partitions $(\E_n)_{n\in\N}$ \emph{locally converge} to a partition $\E$, and we write $\E_n\locto\E$, if $\E_n(i)\to\E(i)$ locally in $\R^d$ for all $i\in\{1,\ldots,N\}$, that is if $|(\asymm{\E_n(i)}{\E(i)})\cap B_R|\to0$ for all $R>0$ and for all $i\in\{1,\ldots,N\}$.

\begin{definition}[Locally isoperimetric partition] \label{def:lip}
A partition $\E_0=(\E_0(1),\ldots,\E_0(N))$ of $\R^d$ is a \emph{locally isoperimetric partition} if for every $R>0$
\begin{equation} \label{eq:lip}
\per(\E_0;B_R) \leq \per(\E;B_R)
\end{equation}
whenever $\E=(\E(1),\ldots,\E(N))$ is a partition satisfying
\begin{equation}\label{eq:lip2}
|\E(i)|=|\E_0(i)| \qquad\text{and}\qquad
\asymm{\E(i)}{\E_0(i)} \ssubset B_R
\qquad\text{for all }i\in\{1,\ldots,N\}.
\end{equation}
A locally isoperimetric partition $\E_0$ is said to be \emph{uniquely-minimizing} if the following property holds: whenever $\E$ is a $N$-partition satisfying \eqref{eq:lip2} for some $R>0$, equality in \eqref{eq:lip} implies the existence of an isometry $T:\R^d\to\R^d$ such that $\E(i)=T(\E_0(i))$ for all $i\in\{1,\ldots,N\}$.
\end{definition}

Notice that at least one region of a partition $\E$ must have infinite Lebesgue measure. As a particular case, $(N+1)$-partitions such that all regions have finite measure except one (the \emph{exterior region}) are usually referred to as $N$-clusters; we refer to \cite[Part~IV]{Maggi} for a presentation of the key ideas about existence and regularity of minimizing clusters. In this paper we are mostly concerned with partitions with at least two regions with infinite measure (in short, \emph{infinite regions}). In this case the perimeter in the full space is necessarily infinite, so that the minimality condition \eqref{eq:lip} has to be formulated locally. It is convenient to introduce a notation for the indices of the regions with finite measure: for a $N$-partition $\E$ we set
\begin{equation} \label{eq:finite_regions}
I_{\E} \defeq \bigl\{ i\in\{1,\ldots,N\} \,:\, |\E(i)|<\infty\bigr\}.
\end{equation}

The basic regularity properties of locally isoperimetric partitions are given in \cite[Theorem~2.4]{NovPaoTor}. \emph{Planar} locally isoperimetric partitions have a rigid structure that we recall from \cite[Theorem~4.1 and Theorem~4.2]{NovPaoTor}.

\begin{theorem}[Structure of planar locally isoperimetric partitions] \label{thm:regularity}
Let $d=2$ and let $\E_0$ be a locally isoperimetric partition in $\R^2$. Then $\partial\E_0$ is connected and there exist a finite family of points $\{p_i\}_{i\in I}$ (vertices) and a finite family $\{\gamma_j\}_{j\in J}$ of closed curves with boundary such that
$$
\partial\E_0 = \bigcup_{j\in J}\gamma_j, \qquad \partial^*\E_0 = \bigcup_{j\in J}\mathrm{int}(\gamma_j), \qquad \Sigma(\E_0) = \bigcup_{j\in J}\mathrm{bd(\gamma_j)}=\bigcup_{i\in I}\{p_i\},
$$
where $\mathrm{int}(\gamma)$ and $\mathrm{bd}(\gamma)$ denote the interior and the boundary points of the curve $\gamma$ respectively. Moreover:
\begin{enumerate}
\item each vertex $p_i$ is a boundary point of exactly three of the curves $\{\gamma_j\}_{j\in J}$, forming 120 degree angles at $p_i$,
\item each curve $\gamma_j$ is either a circular arc, a segment, or a half-line,
\item the three signed curvatures of the arcs meeting in a vertex have zero sum,
\item all the regions with finite area are bounded,
\item there are at most three regions with infinite area. If there are two infinite regions, the interface between them coincides with a straight line outside a sufficiently large ball; if there are three infinite regions, the interfaces between them coincide, outside a sufficiently large ball, with three half-lines whose prolongations define angles of 120 degrees with each other (but not necessarily passing through a single point).
\end{enumerate}
\end{theorem}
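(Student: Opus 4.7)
The plan is to build on the general planar regularity statement (\cite[Theorem~2.4]{NovPaoTor} in dimension $2$), which gives that $\partial^*\E_0$ is locally a finite union of $C^{1,\alpha}$ arcs and that $\Sigma(\E_0)$ consists of isolated points. The remaining structural content is then extracted from a local first-variation analysis at interior points of the interfaces, a blow-up analysis at the singular points, and a blow-down analysis at infinity.

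For \emph{(ii)}, I would apply the first variation along compactly supported area-preserving normal perturbations of a single interface $\E_0(i,j)$ in the regular part: the associated Euler--Lagrange equation forces the signed curvature of each arc to be constant. A connected $C^2$ planar curve with constant signed curvature is a circular arc, a segment, or a half-line. For \emph{(i)}, I would blow up at each $p\in\Sigma(\E_0)$: the rescaled partitions $(\E_0-p)/r$ converge as $r\to 0^+$ to a perimeter-minimizing cone partition of $\R^2$, that is, to a partition into sectors. A direct first-variation computation (equivalently, Plateau's laws in the plane) rules out every configuration except the symmetric $Y$-cone of three sectors meeting at $2\pi/3$, proving that exactly three interfaces meet at $p$ at $120$ degrees. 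For \emph{(iii)}, I would compute the first variation at $p$ along a one-parameter family of compactly supported area-preserving deformations that slide the three arcs meeting at $p$ independently; the first-order term in the perimeter expansion involves the three signed curvatures, and the balance law compatible with the $120$-degree condition forces $\kappa_1+\kappa_2+\kappa_3=0$ with the natural orientations.

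The asymptotic statements \emph{(iv)} and \emph{(v)} would follow from the observation that a circular arc of nonzero curvature $\kappa$ has length at most $2\pi/|\kappa|$ and that a segment has finite length. After a preliminary step establishing that $\partial\E_0$ is connected (an isolated connected component of $\partial\E_0$ could be removed while strictly decreasing the perimeter in a large ball) and that the family of arcs is globally finite, I would deduce that all arcs and segments are contained in some $B_{R_0}$, so that $\partial\E_0\setminus B_{R_0}$ consists only of half-lines. Every connected component of $\R^2\setminus(\partial\E_0\cup B_{R_0})$ then contains a conical sector of positive opening angle, hence has infinite area, so every finite region is contained in $B_{R_0}$, yielding \emph{(iv)}. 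Finally, passing to the blow-down $\E_0/R$ as $R\to\infty$, the prolongations of the asymptotic half-lines produce a minimizing cone partition of $\R^2$ into sectors, and the $Y$-cone classification from the previous step limits the number of sectors to at most three, with the $120$-degree condition on their bounding rays. This gives \emph{(v)}; note that the prolongations may fail to pass through a single vertex, as the statement itself acknowledges.

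I expect the main technical obstacle to be the preliminary global finiteness and connectedness of $\{\gamma_j\}$. A priori, $\partial\E_0$ might contain infinitely many arcs accumulating somewhere, or disconnected components ``at infinity'' that escape the blow-down analysis; ruling these out requires combining the $C^{1,\alpha}$ regularity, the local finiteness of $\Sigma(\E_0)$, and a minimality comparison that removes spurious components.
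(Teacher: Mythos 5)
First, a point of comparison: the paper contains no proof of Theorem~\ref{thm:regularity} at all. It is quoted verbatim from \cite[Theorems~4.1 and 4.2]{NovPaoTor} (with the basic regularity from \cite[Theorem~2.4]{NovPaoTor}), so there is no internal argument to measure your sketch against; what you have written is an outline of the kind of proof carried out in that reference. Your overall strategy (first variation on the regular part for (ii)--(iii), blow-up at singular points for (i), blow-down at infinity for (iv)--(v)) is the standard and essentially correct route, but as a proof it has several genuine gaps beyond the finiteness/connectedness issue you flag yourself.

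Concretely: (a) both your blow-up and blow-down arguments need the limit partition to be again locally isoperimetric (or at least a minimizing cone). For partitions with several infinite regions this is not a soft compactness statement; it is precisely the closure theorem that constitutes one of the main technical contributions of \cite{NovPaoTor}, and it cannot be taken for granted in a two-line appeal to ``convergence to a minimizing cone partition.'' (b) Your count of infinite regions in (v) assumes every infinite region survives as a distinct sector of the blow-down. An infinite region whose end is an asymptotic half-strip (two parallel half-lines at finite distance) has infinite area but collapses to a ray under blow-down, so it is invisible in the limit; one must rule such ends out separately, e.g.\ by a compactly supported pinching competitor that reassigns a long rectangle of the strip to a neighbouring infinite region (cost $L+2w$ versus $2L$), which is admissible since infinite regions carry no volume constraint. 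Relatedly, your claim that every unbounded complementary component ``contains a conical sector'' is false as stated (half-strips), though harmless for (iv) since half-strips still have infinite area. (c) In the two-infinite-region case, statement (v) asserts that the interface \emph{coincides with a single straight line} outside a large ball; the blow-down only gives that the two outgoing half-lines are anti-parallel, not collinear, and excluding a parallel offset requires an additional comparison argument (replace the middle portion by the chord and restore the finite volumes by a volume-fixing variation). (d) Finally, the connectedness of $\partial\E_0$ is not obtained by simply ``removing an isolated component'': deleting a component that bounds a finite chamber changes prescribed volumes, so the competitor must again be corrected by volume-fixing variations, and one must specify how regions are relabelled; this is a real (if standard) argument, not an observation. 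None of these points invalidates the strategy, but as written the proposal is a plausible roadmap rather than a proof, and the heavy lifting it defers is exactly what \cite{NovPaoTor} supplies.
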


Given a uniquely-minimizing locally isoperimetric partition $\E_0=(\E_0(1),\ldots,\E_0(N))$, we define for $R>0$ the class of volume-constrained competitors obtained by perturbing $\E_0$ in a ball of radius $R$ (up to isometries):
\begin{multline}\label{eq:comp}
\comp_R(\E_0) \defeq
\Bigl\{ \E=(\E(1),\ldots,\E(N)) \,:\, |\E(i)|=|\E_0(i)| \,\text{ and }\, \asymm{T(\E(i))}{\E_0(i)}\ssubset B_R(x_0) \\
\text{ for all }i\in\{1,\ldots,N\}, \text{ for some }T:\R^d\to\R^d\text{ isometry and } x_0\in\R^d\Bigr\}.
\end{multline}
We set $\comp(\E_0)\defeq \bigcup_{R>0}\comp_R(\E_0)$. We measure the distance of a partition $\E\in\comp(\E_0)$ from $\E_0$ by the quantity
\begin{equation} \label{eq:asymm}
\Asymm{\E}{\E_0} \defeq \inf \Bigl\{ \dis{\E}{T(\E_0)} \,:\, T:\R^d\to\R^d \text{ isometry} \Bigr\},
\end{equation}
where $T(\E_0)$ is the partition defined by $T(\E_0)(i)\defeq T(\E_0(i))$, and for every two partitions $\E=(\E(1),\ldots,\E(N))$ and $\F=(\F(1),\ldots,\F(N))$ we have set
\begin{equation} \label{eq:distance}
\dis{\E}{\F}\defeq\frac12\sum_{i=1}^N |\asymm{\E(i)}{\F(i)}|.
\end{equation}
For a partition $\E\in\comp(\E_0)$ we introduce its \emph{perimeter deficit} (with respect to $\E_0$) as
\begin{equation} \label{eq:deficit}
\defi{\E}{\E_0} \defeq \per(T(\E);B_R(x_0)) - \per(\E_0;B_R(x_0)),
\end{equation}
where $R>0$ is any radius such that $\E\in\comp_R(\E_0)$, and $T$ and $x_0$ are as in \eqref{eq:comp}.

A natural question is, then, whether it is possible to strengthen the minimality inequality \eqref{eq:lip}, which can be rephrased as $\defi{\E}{\E_0}\geq0$ for every $\E\in\comp(\E_0)$, in a \emph{quantitative form}, namely whether it is possible to find a constant $\kappa>0$ such that
\begin{equation} \label{eq:quantitative}
\defi{\E}{\E_0} \geq \kappa \Asymm{\E}{\E_0}^2
\end{equation}
for all partitions $\E\in\comp(\E_0)$. In the case $N=2$ (i.e., one region with finite volume and one infinite region, with $\E_0=(B_r,\R^d\setminus B_r)$), the stability inequality \eqref{eq:quantitative} reduces to the celebrated \emph{quantitative isoperimetric inequality} \cite{FusMagPra08}. For minimizing clusters, to the best of our knowledge the only stability inequality available is for the planar \emph{standard double bubble} (two regions with finite area and one infinite region in $\R^2$), proved by Cicalese, Leonardi and Maggi in \cite{CicLeoMag17}.

For partitions with at least two infinite regions, however, one quickly realizes that an estimate of the form \eqref{eq:quantitative} cannot hold in the full class $\comp(\E_0)$: indeed, one can construct perturbations $\E_R$ of $\E_0$ in large balls $B_R$, $R\gg 1$, such that $\sup_R\defi{\E_R}{\E_0}<\infty$ and $\Asymm{\E_R}{\E_0}\to\infty$ as $R\to+\infty$. In dimension $d=2$ it is enough construct $\E_R$ by replacing a portion of length $R$ of one of the infinite boundaries of $\E_0$ (which are half-lines by Theorem~\ref{thm:regularity}) by another segment of the same length $R$, parallel to and at distance 1 from the first, and connect its endpoints to the rest of the boundary by two segments of length 1 each (see Figure~\ref{fig:new}). Then $\defi{\E_R}{\E_0}=2$, but $\Asymm{\E_R}{\E_0}=R$.

For this reason we restrict to the class $\comp_R(\E_0)$, that is, we impose an upper bound on the diameter of the symmetric difference of $\E$ and $\E_0$ (up to isometries), and we wish to prove the stability inequality \eqref{eq:quantitative} in $\comp_R(\E_0)$, with a constant $\kappa$ depending also on $R$.

\begin{figure}[h]
\begin{tikzpicture}[scale=1.2,line cap=round,line join=round]
	\draw[very thick] (-{sin(120)},0) arc (270-120:120-90:1);
	\draw[very thick] ({sin(120)},0) arc (450-120:90+120:1);	
	\draw[very thick] (-1.5,0) -- (-{sin(120)},0);
	\draw[very thick,dashed] (-2,0) -- (-1.5,0);	
	\draw[very thick] ({sin(120)},0) -- (1.2,0);
	\draw[thick, dashed] (1.2,0) -- (5,0);	
	\draw[very thick] (5,0) -- (6,0);
	\draw[very thick,dashed] (6,0) -- (6.5,0);	
	\draw[very thick] (1.2,0.2) -- (5,0.2);	
	\draw[very thick] (1.2,0) -- (1.2,0.2);		
	\draw[very thick] (5,0) -- (5,0.2);			
	\node[below] at (3,0) {\footnotesize $R$};
	\node[right] at (5,0.13) {\footnotesize $1$};	
\end{tikzpicture}    
\caption{Two-dimensional construction showing that the constant $\kappa$ in \eqref{eq:quantitative} should depend on the diameter $R$ of the perturbation.}\label{fig:new}
\end{figure}


\subsection{Main result: stability of the planar standard lens} \label{subsec:lens}

We now leave the general setting considered in the previous subsection and we consider \emph{planar} partitions ($d=2$) into $N=3$ regions, one of which with finite area and the remaining two with infinite measure; that is, we consider partitions $\E=(\E(1),\E(2),\E(3))$ of $\R^2$ such that $|\E(1)|=m$, where $m>0$ is a fixed parameter, and $|\E(2)|=|\E(3)|=+\infty$. In this case the perimeter of a partition $\E$ in a ball $B_R$ is simply given by
\begin{equation} \label{eq:perimeter-lens}
\per(\E;B_R) = \per(\E(1);B_R) + \Hone(\E(2,3)\cap B_R).
\end{equation} 
It has been proved in \cite{AlaBroVri} (see also \cite{NovPaoTor}) that in this case the only locally minimizing partition is given by the \emph{standard lens partition} defined below, see Figure~\ref{fig:lens}. This result has been extended to general dimension $d\geq2$ and to possibly weighted perimeter in \cite{BroNov}.

\begin{definition}[Standard lens partition] \label{def:lens}
Let $m>0$. The \emph{standard lens} with area $m$ is the set
\begin{equation} \label{eq:lens}
\lens \defeq \Bigl\{ (x,y)\in\R^2 \,:\, |x|<\fr r_{m}, \; |y| < \sqrt{r_{m}^2-x^2}- {\textstyle\frac12} r_{m} \Bigr\},
\end{equation}
where the radius $r_{m}>0$ is such that $|\lens|=m$, that is,
\begin{equation} \label{eq:lens-radius}
r_{m} = \sqrt{m}\biggl(\frac23\pi-\frac{\sqrt{3}}{2}\biggr)^{-1/2}.
\end{equation}
The \emph{standard lens partition} with area $m$ is the $3$-partition of $\R^2$ given by
\begin{equation}\label{eq:lens-cluster}
\lensc \defeq (\lens, H^+\setminus\lens, H^-\setminus\lens), 
\end{equation}
where $H^+\defeq\{ (x,y)\in\R^2 \,:\, y>0 \}$, $H^-\defeq\{ (x,y)\in\R^2 \,:\, y<0 \}$.
\end{definition}

\begin{figure}[t]
\begin{tikzpicture}[scale=3.5,line cap=round,line join=round]
	\fill[blue!15!white] (-{sin(120)},0) arc (270-120:120-90:1) arc (450-120:90+120:1);
    \fill[green!30!white] (0,{cos(120)}) -- (0,{cos(120)+0.15}) arc (90:120-90:0.15) -- (0,{cos(120)});
    \fill[green!30!white] ({sin(120)},0) -- ({sin(120)+0.1},0) arc (0:120:0.1) -- ({cos(120)/2+sin(120)},{sin(120)/2});    
	\draw[thick] (-{sin(120)},0) arc (270-120:120-90:1);
	\draw[thick] ({sin(120)},0) arc (450-120:90+120:1);	
	\draw[thick] (-1.5,0) -- (-{sin(120)},0);
	\draw[thick] ({sin(120)},0) -- (1.5,0);
	\draw [dashed] (-{sin(120)},0) -- ({sin(120)},0);
	\draw [fill=black] (-{sin(120)},0) circle (0.5pt);
	\draw [fill=black] ({sin(120)},0) circle (0.5pt);	
	\node [below left] at (-{sin(120)},0) {$p_1$};
	\node [below right] at ({sin(120)},0) {$p_2$};
	\draw [fill=black] (0,0) circle (0.5pt);
	\node [below right] at (0,0) {$O$};
	\draw [fill=black] (0,{cos(120)}) circle (0.5pt);
	\draw [dashed] (-{sin(120)},0) -- (0,{cos(120)});
	\draw [dashed] ({sin(120)},0) -- (0,{cos(120)});
	\draw [dashed] (0,0) -- (0,{cos(120)});
	\node at (0,0.2) {$\lensc(1)$};	
	\node at (-1,0.3) {$\lensc(2)$};
	\node at (-1,-0.3) {$\lensc(3)$};
	\node at (0.35,-0.2) {$r_{m}$};
    \draw (0,{cos(120)+0.15}) arc (90:120-90:0.15);
    \node at (0.1,{cos(120)+0.18}) {\small $\frac{\pi}{3}$};
    \draw ({sin(120)+0.1},0) arc (0:120:0.1);
	\draw [dashed] ({sin(120)},0) -- ({cos(120)/2+sin(120)},{sin(120)/2});    
    \node at ({sin(120)+0.12},0.13) {\small $\frac23\pi$}; 
\end{tikzpicture}
\caption{The standard lens partition $\lensc$ as in Definition~\ref{def:lens}.}\label{fig:lens}
\end{figure}

The boundary of $\lens$ is made of two symmetric circular arcs meeting at the two points $p_1=(-\fr r_{m},0)$, $p_2=(\fr r_{m},0)$ on the $x$-axis, and forming an angle $\frac23\pi$ with the $x$-axis. The two circular arc have radius $r_{m}$ and subtend an angle $\frac23\pi$ at the center. The interface between the regions $\lensc(2)$ and $\lensc(3)$ of the lens partition is flat and is given by
\begin{equation*}
\lensc(2,3) = \bigl\{ (x,0)\in\R^2 \,:\, |x| > \fr r_{m}\bigr\}.
\end{equation*}
By \cite[Theorem~1.9]{AlaBroVri} and \cite[Theorem~2.9]{BroNov}, the lens partition $\lensc$ is a uniquely-minimizing locally isoperimetric partition, in the sense of Definition~\ref{def:lip}: $\defi{\E}{\lensc}\geq0$ for any other partition $\E\in\comp(\lensc)$. Our main result is the following sharp stability inequality for the standard lens, which strengthens the local minimality of the lens partition in a quantitative form. The proof of the theorem is given at the end of Section~\ref{sec:step1}.

\begin{theorem}[Stability of the standard lens] \label{thm:stability-lens}
Let $m>0$. For every $R>0$ there exists a constant $\kappa_{m,R}>0$, depending on $m$ and $R$, such that
\begin{equation} \label{eq:stability-lens}
\defi{\E}{\lensc} \geq \kappa_{m,R}\Asymm{\E}{\lensc}^2 \qquad\text{for all }\E\in\comp_R(\lensc).
\end{equation}
\end{theorem}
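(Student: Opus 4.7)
The plan is to argue by contradiction along the selection-principle pattern pioneered by Cicalese--Leonardi \cite{CicLeo12} and developed for clusters in \cite{CicLeoMag17}, reducing the general inequality to a Fuglede-type second variation analysis on small $C^1$-perturbations of $\lensc$. Suppose, for some fixed $R>0$, there is a sequence $(\E_n)\subset\comp_R(\lensc)$ with $\defi{\E_n}{\lensc} < \frac{1}{n}\Asymm{\E_n}{\lensc}^2$. Setting $\alpha_n\defeq\Asymm{\E_n}{\lensc}$, after composing with the (quasi-)optimal isometry from \eqref{eq:asymm} and translating, I may assume $\asymm{\E_n(i)}{\lensc(i)}\ssubset B_R(x_n)$ for some $x_n\in\R^2$. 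The local minimality of $\lensc$ forces $\defi{\E_n}{\lensc}\ge0$, so $\defi{\E_n}{\lensc}\to0$; combined with the unique local minimality of the lens (Theorem~\ref{thm:regularity} and the cited result of \cite{AlaBroVri,BroNov}), a compactness and lower-semicontinuity argument gives $\E_n\locto\lensc$ and $\alpha_n\to0$.

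The next step is to upgrade regularity via the selection principle: replace $(\E_n)$ by minimizers $\F_n$ of a penalized functional of the form
\begin{equation*}
\F\mapsto \per(\F;B_R) + \sqrt{\bigl(\Asymm{\F}{\lensc}-\alpha_n\bigr)^2+\eta_n^2},
\end{equation*}
among $\F\in\comp_R(\lensc)$, with $\eta_n\to 0$ chosen so that $\F_n$ still violates the stability inequality and $\Asymm{\F_n}{\lensc}\to 0$. A direct Euler--Lagrange comparison shows that each $\F_n$ is a uniform $\Lambda$-quasi-minimizer for the partition perimeter in $B_R$. At this point I invoke the planar improved-convergence theorem for quasi-minimizing clusters (the adaptation to our setting of \cite{CicLeoMag16,LeoMag17}), which upgrades the local $L^1$ convergence $\F_n\locto\lensc$ to $C^1$-convergence of the regular parts of $\partial\F_n$ to $\partial\lensc$, together with convergence of the triple-point vertices of $\F_n$ to the vertices $p_1,p_2$ of $\lensc$. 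In particular, inside $B_R$ the partition $\F_n$ is a normal graph over $\lensc$ of small $C^1$-norm.

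I then parametrize $\partial^*\F_n\cap B_R$ as a normal graph over $\partial^*\lensc\cap B_R$ by scalar functions $\varphi_n^j$, one per arc or half-line $\gamma_j$, subject to continuity and $120$-degree angle conditions at the two vertices, to the linear volume constraints $|\F_n(i)|=|\lensc(i)|$, and to compactly supported cut-off at $\partial B_R$ on the two infinite rays. A standard second-variation expansion then gives
\begin{equation*}
\defi{\F_n}{\lensc} = Q(\varphi_n) + o\bigl(\|\varphi_n\|_{H^1}^2\bigr),
\end{equation*}
where $Q$ is the Jacobi quadratic form
\begin{equation*}
Q(\varphi) = \sum_j \int_{\gamma_j}\Bigl((\varphi^j)'{}^2 - c_j^2\,(\varphi^j)^2\Bigr)\dd\Hone + \text{(junction boundary terms at }p_1,p_2\text{)},
\end{equation*}
with $c_j$ the (signed) curvature of $\gamma_j$. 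Since $\Asymm{\F_n}{\lensc}^2\le C\|\varphi_n\|_{L^2}^2$, the stability inequality will follow once one proves the coercivity estimate $Q(\varphi)\ge c(R,m)\,\|\varphi\|_{L^2}^2$ on the subspace of admissible perturbations that are orthogonal to the infinitesimal isometries. Dividing by $\alpha_n^2$ and passing to the limit contradicts the choice of $\F_n$ and closes the proof.

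The hard part is this coercivity analysis. It requires, first, correctly imposing the Herring ($120$-degree) junction conditions at $p_1,p_2$, which couple the three functions $\varphi^j$ meeting at each vertex and produce the boundary terms appearing when integrating the Jacobi operator by parts; second, controlling the contributions from the two infinite rays of $\lensc(2,3)$, which after truncation at $\partial B_R$ yield boundary terms at $\partial B_R$ that account for the explicit $R$-dependence of $\kappa_{m,R}$; and third, identifying the kernel of $Q$ on the volume-preserving subspace as the three-dimensional space of infinitesimal planar isometries (two translations and one rotation around the origin), which is precisely what gets quotiented out by the infimum over isometries built into $\Asymm{\cdot}{\lensc}$. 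Once these three points are handled, the Poincaré-type inequality needed for coercivity follows by a standard compactness-contradiction argument on the finite-dimensional quotient space.
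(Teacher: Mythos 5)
Your overall architecture for the reduction --- contradiction sequence, penalized selection principle producing uniformly $(\Lambda,r_0)$-minimizing partitions, improved convergence \`a la Cicalese--Leonardi--Maggi to reduce to small $C^1$-perturbations of $\lensc$ --- is the same as the paper's (Theorems~\ref{thm:small-asymm}, \ref{thm:selection-principle}, \ref{thm:improved-conv}), though even there you pass over the difficulties specific to two infinite chambers: the paper must confine the penalized problem to the set $O_R$ of Definition~\ref{def:eye} (a Dirichlet condition outside a large ball), verify quasi-minimality also in balls far from the perturbation where $\partial\E_0$ is a segment, compare perimeters carefully across $\partial O_R$, use the localized volume-fixing variation of Lemma~\ref{lem:volume-fixing}, and ``copy and paste'' the perturbations when the centers $x_n$ escape to infinity. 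These are the points where the argument for partitions with several infinite regions genuinely differs from \cite{CicLeoMag17}.

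The genuine gap is in the final step, which is the quantitative core of the theorem and which you explicitly defer as ``the hard part''. The paper does not perform a Jacobi/second-variation coercivity analysis at all: in Theorem~\ref{thm:stability-smooth} it writes the perturbed interfaces as graphs $g_0,g_1,g_2$ (via Lemma~\ref{lem:riccardo-lemma}), normalizes by a horizontal translation so that the two triple points have opposite abscissae, compares with a dilated lens profile $\tilde u_i$, and obtains the quadratic lower bound for $\defi{\E}{\lensc}$ from elementary pointwise inequalities for $\sqrt{1+t^2}$ together with an exact integration by parts in which the area constraint produces the positive term $m\sigma^2/(1+\sigma)$; the asymmetry is then controlled by a Poincar\'e inequality. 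In your sketch this step is not carried out, and the structural claim you make about it is incorrect in this setting: the kernel of the quadratic form on admissible (compactly supported, area-preserving) normal fields is not the three-dimensional space of infinitesimal isometries. Vertical translations and rotations displace the flat interface between the two infinite chambers, so their normal components on the rays are constant, respectively linearly growing, hence not compactly supported, and after cut-off they carry strictly positive energy; the only genuine degeneracy is the horizontal translation along the line, which the paper removes precisely by its normalization of the triple points. Finally, to absorb an $o(\|\varphi\|_{H^1}^2)$ remainder in a Fuglede expansion you would need coercivity of the form in $H^1$, not merely the $L^2$ bound you state, and the $R$-dependence of $\kappa_{m,R}$ enters through the Poincar\'e constant on $(-R-1,R+1)$ rather than through boundary terms at $\partial B_R$. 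As it stands, the proposal is a plausible programme whose decisive estimate is missing and whose announced route to it contains a false step.
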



\section{Selection principle and improved convergence} \label{sec:step2}

In this section we set up the general strategy for the proof of the stability of planar locally isoperimetric partitions, following the approach via \emph{improved convergence} due to Cicalese, Leonardi and Maggi \cite{CicLeoMag16}, and in turn based on the \emph{selection principle} devised by Cicalese and Leonardi \cite{CicLeo12}.

We point out that in this section we do not make use of the specific geometry of the lens partition. The main results of this part (Theorem~\ref{thm:small-asymm}, Theorem~\ref{thm:selection-principle}, and Theorem~\ref{thm:improved-conv}) are indeed valid for any uniquely-minimizing locally isoperimetric partition in $\R^2$, and might be instrumental in proving the stability of different planar locally isoperimetric partitions, other than the standard lens. Notice, however, the restriction to two dimensions, which we require in order to exploit the structure of the boundaries among infinite regions in $\R^2$ (see Theorem~\ref{thm:regularity}): indeed we have to formulate the selection principle including a Dirichlet boundary condition outside of a large ball. For \emph{clusters} (i.e. partitions with a single infinite region) these results have been proved in \cite[Appendix~A]{CicLeoMag17} in any dimension, however we focus here on the case of at least two infinite regions.

For the rest of this section we work in the general setting introduced in Section~\ref{subsec:def} and in dimension $d=2$.


\subsection{Selection principle} \label{subsec:select-principle}
We let $\E_0=(\E_0(1),\ldots,\E_0(N))$ be a uniquely-minimizing locally isoperimetric partition in $\R^2$, according to Definition~\ref{def:lip}. For $R>0$ we define the quantity
\begin{multline} \label{eq:kappa}
    \kappa_R(\E_0) \defeq \inf\biggl\{ \liminf_{k\to\infty} \frac{\defi{\E_k}{\E_0}}{\Asymm{\E_k}{\E_0}^2} \;:\; (\E_k)_{k\in\N}\subset\comp_R(\E_0), \, \Asymm{\E_k}{\E_0}>0 \text{ for all }k\in\N,\\
    \lim_{k\to\infty}\dis{\E_k}{\E_0}=0 \biggr\}
\end{multline}
where the deficit $\defi{\cdot}{\E_0}$, the distance $\Asymm{\cdot}{\E_0}$, and the class of competitors $\comp_R(\E_0)$ are defined in \eqref{eq:deficit}, \eqref{eq:asymm}, and \eqref{eq:comp} respectively.

\begin{remark} \label{rmk:sharpness}
If $\E_0$ has at least two infinite regions, it is easy to see that $\kappa_R(\E_0)$ is finite: indeed, since there is no need to preserve the volume of the infinite regions, it is enough to slightly perturb one of the flat boundaries between two infinite regions (for instance, removing a segment and replacing it by two segments forming a sawtooth of small height $t>0$) and construct partitions $\E_t$ such that the ratio $\defi{\E_t}{\E_0}/\Asymm{\E_t}{\E_0}^2$ remains bounded as $t\to0$.

From this construction it also follows that the quadratic decay on the right-hand side of \eqref{eq:quantitative} is sharp, in the following sense (see also \cite[Remark~1.2]{CicLeoMag16}): if $\defi{\E}{\E_0} \geq \phi(\Asymm{\E}{\E_0})$ for some function $\phi:[0,\infty)\to[0,\infty)$ and for all $\E\in\comp_R(\E_0)$, then there exist $C>0$ and $t_0>0$ such that $\phi(t)\leq Ct^2$ for all $t\in(0,t_0)$.
\end{remark}

In the following theorem we show that the proof of the stability inequality for $\E_0$ can be reduced to the case of partitions $\E$ with small distance $\Asymm{\E}{\E_0}$ from $\E_0$.

\begin{theorem} \label{thm:small-asymm}
Let $\E_0=(\E_0(1),\ldots,\E_0(N))$ be a uniquely-minimizing locally isoperimetric partition in $\R^2$, and let $R>0$. Then for every $\e>0$ there exists $\delta>0$ such that for every $\E\in\comp_R(\E_0)$, if $\defi{\E}{\E_0}<\delta$ then $\Asymm{\E}{\E_0}<\e$.
Furthermore, the condition
\begin{equation} \label{eq:kappa2}
\kappa_R(\E_0)>0
\end{equation}
is equivalent to the existence of a constant $\kappa_{R,\E_0}>0$, depending on $R$ and $\E_0$, such that
\begin{equation} \label{eq:kappa3}
    \defi{\E}{\E_0} \geq \kappa_{R,\E_0}\Asymm{\E}{\E_0}^2 \qquad\text{for all }\E\in\comp_R(\E_0).
\end{equation}
\end{theorem}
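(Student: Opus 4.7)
The plan is a compactness-contradiction argument anchored in the unique-minimality of $\E_0$. For the first assertion, I suppose to the contrary that there exist $\e > 0$ and $(\E_k) \subset \comp_R(\E_0)$ with $\defi{\E_k}{\E_0} \to 0$ but $\Asymm{\E_k}{\E_0} \geq \e$. By the definition of $\comp_R(\E_0)$, I choose isometries $T_k$ and centers $x_{0,k}$ such that $\F_k := T_k(\E_k)$ satisfies $\asymm{\F_k(i)}{\E_0(i)} \ssubset B_R(x_{0,k})$, noting that $\defi{\F_k}{\E_0} = \defi{\E_k}{\E_0} \to 0$. Up to a subsequence, I split into two cases depending on whether $(x_{0,k})$ is bounded or divergent.

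If $(x_{0,k})$ is bounded, I work inside a fixed ball $B_{R'}$ containing all $B_R(x_{0,k})$. Since $\F_k$ agrees with $\E_0$ outside $B_R(x_{0,k})$ and the deficit is infinitesimal, the perimeters $\per(\F_k;B_{R'})$ are uniformly bounded; standard compactness for partitions of locally finite perimeter then produces a subsequential local $L^1$ limit $\F$ with $|\F(i)| = |\E_0(i)|$ and $\asymm{\F(i)}{\E_0(i)} \ssubset B_{R'}$. Combining lower semicontinuity of perimeter with the local minimality of $\E_0$ yields $\per(\F;B_{R'}) = \per(\E_0;B_{R'})$, so by the uniquely-minimizing hypothesis $\F = S(\E_0)$ for some isometry $S$; since $\F$ and $\E_0$ coincide off a compact set, $S$ must preserve the noncompact part of $\E_0$, forcing $\F = \E_0$. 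Therefore $\dis{\F_k}{\E_0} \to 0$ and $\Asymm{\E_k}{\E_0} \leq \dis{\F_k}{\E_0} \to 0$, contradicting $\Asymm{\E_k}{\E_0} \geq \e$. If instead $|x_{0,k}| \to \infty$, I translate by $-x_{0,k}$: by Theorem~\ref{thm:regularity}, $\E_0$ coincides with finitely many half-lines outside a large ball, so the translates $\tau_{-x_{0,k}}(\E_0)$ locally converge to a half-plane partition $\G$ of $\R^2$. Running the same compactness scheme on $\tau_{-x_{0,k}}(\F_k)$ (which equals $\tau_{-x_{0,k}}(\E_0)$ outside $B_R(0)$) and identifying the limit with $\G$ through the trivial unique minimality of half-plane configurations gives $|(\F_k(i) \triangle \E_0(i)) \cap B_R(x_{0,k})| \to 0$, hence $\dis{\F_k}{\E_0} \to 0$ and the same contradiction.

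For the second assertion, the implication \eqref{eq:kappa3} $\Rightarrow$ \eqref{eq:kappa2} is immediate from the definition of $\kappa_R(\E_0)$. For the converse, if \eqref{eq:kappa3} fails I pick $(\E_n) \subset \comp_R(\E_0)$ with $\Asymm{\E_n}{\E_0} > 0$ and $\defi{\E_n}{\E_0}/\Asymm{\E_n}{\E_0}^2 \to 0$. Since $\E_n \in \comp_R(\E_0)$ bounds $\Asymm{\E_n}{\E_0}$ by a constant depending only on $R$, this forces $\defi{\E_n}{\E_0} \to 0$, and the first assertion then yields $\Asymm{\E_n}{\E_0} \to 0$. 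Choosing isometries $T_n'$ nearly realizing the asymmetry infimum, I set $\F_n := (T_n')^{-1}(\E_n) \in \comp_R(\E_0)$ and obtain $\dis{\F_n}{\E_0} \to 0$ with the same deficit and asymmetry as $\E_n$. Hence $(\F_n)$ is admissible in the definition of $\kappa_R(\E_0)$, forcing $\kappa_R(\E_0) = 0$ and contradicting \eqref{eq:kappa2}. The main obstacle is the treatment of diverging centers in the first assertion, which requires the asymptotic half-line structure from Theorem~\ref{thm:regularity} and a blow-down reducing the uniqueness identification to the trivial case of half-plane configurations.
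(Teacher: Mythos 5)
Your proposal follows the same overall skeleton as the paper's proof: a contradiction--compactness argument for the first assertion, split according to whether the centers $x_{0,k}$ stay bounded or diverge, and essentially the identical argument for the equivalence of \eqref{eq:kappa2} and \eqref{eq:kappa3} (your use of near-optimal isometries is fine, and if anything slightly more careful than assuming the infimum in $\Asymm{\cdot}{\E_0}$ is attained). The one genuinely different step is the divergent-centers case. The paper ``copies and pastes'' the far-away perturbation $\E_k\cap B_R(x_k)$ into a bounded ball $B_R(y_k)$ in which $\partial\E_0$ is a translate of $\partial\E_0\cap B_R(x_k)$, and then re-runs the bounded-center compactness argument, so that only the uniquely-minimizing hypothesis on $\E_0$ is ever invoked. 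You instead blow down by translating to the origin and identify the limit with a half-plane configuration. This route is viable, but note what it costs: the blow-down limit is a degenerate object (some regions are empty), hence not a partition in the sense of Definition~\ref{def:partition}, so the ``unique minimality'' you invoke is not the hypothesis of the theorem but the classical uniqueness of the flat interface as a local perimeter minimizer (plus a volume-constraint observation showing the finite regions cannot reappear in the far ball, and the degenerate subcase in which $B_R(x_{0,k})$ eventually misses $\partial\E_0$, where the contradiction comes from the isoperimetric inequality). These are standard facts, but they are extra ingredients that the paper's copy-paste trick avoids.

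There is also a small slip in your bounded-centers case: from $\F=S(\E_0)$ and the fact that $\F$ and $\E_0$ agree outside a compact set you conclude $\F=\E_0$. That inference is not justified in general; for the lens partition, a translation $S$ along the symmetry axis yields $S(\lensc)\neq\lensc$ even though the two partitions coincide outside a large ball, so your intermediate claim $\dis{\F_k}{\E_0}\to0$ need not hold. The contradiction survives without it: since $\Asymm{\cdot}{\E_0}$ is an infimum over isometries and is $1$-Lipschitz with respect to $\dis{\cdot}{\cdot}$, the identity $\F=S(\E_0)$ already gives $\Asymm{\E_k}{\E_0}=\Asymm{\F_k}{\E_0}\leq \dis{\F_k}{\F}\to0$, which contradicts $\Asymm{\E_k}{\E_0}\geq\e$. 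The paper arranges the logic so as to avoid identifying $S$ at all: the limit partition has zero deficit but asymmetry at least $\e_*$, which directly contradicts the uniquely-minimizing property.
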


\begin{proof}
We argue by contradiction and we assume that, for some $R>0$, there exist $\e_*>0$ and a sequence of partitions $(\E_k)_{k\in\N}\subset\comp_{R}(\E_0)$ such that
\begin{equation} \label{proof_small-asymm_1}
\lim_{k\to\infty}\defi{\E_k}{\E_0}=0, \qquad \Asymm{\E_k}{\E_0}\geq \e_*.
\end{equation}
By definition of $\comp_{R}(\E_0)$ in \eqref{eq:comp}, and up to replacing each partition $\E_k$ by an isometric copy, we can find points $x_k\in\R^2$ such that $\asymm{\E_k(i)}{\E_0(i)}\ssubset B_R(x_k)$.

We consider first the case $\sup_k|x_k|<\infty$, so that $x_k\to x_0\in\R^2$ up to a (not relabeled) subsequence and $B_R(x_k)\subset B_{R+1}(x_0)$ for all $k$ large enough. For every $i\in\{1,\ldots,N\}$ the sets $\E_k(i)\cap B_{R+1}(x_0)$ have uniformly bounded perimeter in $B_{R+1}(x_0)$, and by standard compactness results (see \cite[Theorem~12.26]{Maggi}) we can assume that, up to further subsequences, $\E_k(i)\cap B_{R+1}(x_0)\to F_i$ as $k\to\infty$, for some set of finite perimeter $F_i\subset B_{R+1}(x_0)$. Since $\asymm{\E_k(i)}{\E_0(i)}\ssubset B_R(x_k)$, we also have that $F_i$ coincides with $\E_0(i)$ in a uniform neighbourhood of $\partial B_{R+1}(x_0)$. We then define the partition $\E_\infty$ with regions $\E_\infty(i)\defeq F_i\cup (\E_0(i)\setminus B_{R+1}(x_0))$, and by construction it is immediate to check that $\E_\infty\in\comp_{R+1}(\E_0)$. Furthermore by lower semicontinuity of the perimeter we have that
\begin{equation*}
\defi{\E_\infty}{\E_0} \leq\liminf_{k\to\infty} \defi{\E_k}{\E_0} =0, \qquad 
\Asymm{\E_\infty}{\E_0} = \lim_{k\to\infty}\Asymm{\E_k}{\E_0}\geq \e_*>0.
\end{equation*}
These two properties contradict the assumption that $\E_0$ is uniquely-minimizing.

If instead $\sup_k|x_k|=\infty$, recalling that the regions of $\E_0$ with finite area are bounded (Theorem~\ref{thm:regularity}), we have $\E_0(i)\cap B_R(x_k)=\emptyset$ for all $i\in I_{\E_0}$ and for all $k$ sufficiently large, where $I_{\E_0}$ is the set of the indices corresponding to the finite regions as in \eqref{eq:finite_regions}.
Hence $B_R(x_k)$ has nonempty intersection only with some of the infinite regions of $\E_0$. By the structure of the interfaces among the infinite regions in Theorem~\ref{thm:regularity}, we have that for all $k$ sufficiently large the interface $\partial\E_0\cap B_R(x_k)$ is either empty, or a segment. We can then find new centers $y_k\in\R^2$, with $\sup_k|y_k|<\infty$, so that $\partial\E_0\cap B_R(y_k)$ coincides with a translation of $\partial\E_0\cap B_R(x_k)$, and ``copy and paste'' $\E_k\cap B_R(x_k)$ into $B_R(y_k)$. This way we obtain a new sequence of partitions which satisfy the same properties as $\E_k$ and are perturbations of $\E_0$ inside $B_R(y_k)$. Since the new centers $y_k$ are uniformly bounded, the same compactness argument as in the previous case allows to conclude by contradiction. This completes the proof of the first part of the statement.

Concerning the equivalence between \eqref{eq:kappa2} and \eqref{eq:kappa3}, it is immediate to see that \eqref{eq:kappa3} implies \eqref{eq:kappa2}. Conversely, assume that \eqref{eq:kappa2} holds, and by contradiction that \eqref{eq:kappa3} fails, that is, there exists a sequence $(\E_k)_{k\in\N}\subset\comp_{R}(\E_0)$ with $\Asymm{\E_k}{\E_0}>0$ such that
\begin{equation} \label{proof_small-asymm_2}
    \frac{\defi{\E_k}{\E_0}}{\Asymm{\E_k}{\E_0}^2} \to 0 \qquad \text{as }k\to\infty.
\end{equation}
We first observe that $\Asymm{\E_k}{\E_0}\to0$ as $k\to\infty$. Indeed, if not then $\Asymm{\E_k}{\E_0}\geq\e$ for some $\e>0$ and in turn, by the first part of the statement, also $\defi{\E_k}{\E_0}\geq\delta>0$. Moreover, since $\E_k\in\comp_{R}(\E_0)$ we also have $\Asymm{\E_k}{\E_0}\leq|B_R|$. These inequalities contradict \eqref{proof_small-asymm_2}, showing that $\Asymm{\E_k}{\E_0}\to0$.

Let now $T:\R^2\to\R^2$ be an isometry such that $\Asymm{\E_k}{\E_0}=\dis{T(\E_k)}{\E_0}$. The sequence of partitions $\F_k\defeq T(\E_k)$ is such that $\F_k\in\comp_R(\E_0)$ and $\dis{\F_k}{\E_0}\to0$, and is therefore admissible in the definition of $\kappa_R(\E_0)$ in \eqref{eq:kappa}. It follows
\begin{equation*}
    0<\kappa_R(\E_0) \leq \liminf_{k\to\infty}\frac{\defi{\F_k}{\E_0}}{\Asymm{\F_k}{\E_0}^2} = \liminf_{k\to\infty}\frac{\defi{\E_k}{\E_0}}{\Asymm{\E_k}{\E_0}^2} = 0,
\end{equation*}
which is a contradiction.
\end{proof}

Thanks to Theorem~\ref{thm:small-asymm}, our final goal will be to show the strict inequality $\kappa_R(\lensc)>0$ for the standard lens partition. We next show in Theorem~\ref{thm:selection-principle} the existence of a recovery sequence for $\kappa_R(\E_0)$ made of \emph{$(\Lambda,r_0)$-minimizing partitions} (with uniform constants), according to the following definition.

\begin{definition} \label{def:quasi-min}
Given $\Lambda>0$ and $r_0>0$, a partition $\E=(\E(1),\ldots,\E(N))$ is said to be a \emph{$(\Lambda,r_0)$-minimizing partition} if
\begin{equation} \label{eq:quasi-min}
\per(\E;B_{r_0}(x)) \leq \per(\F;B_{r_0}(x)) + \Lambda \dis{\E}{\F}
\end{equation}
whenever $x\in\R^2$ and $\F$ is a partition such that $\asymm{\F(i)}{\E(i)}\ssubset B_{r_0}(x)$.
\end{definition}

In the proof of Theorem~\ref{thm:selection-principle} we need the following construction of a suitable neighbourhood $O_R$ of a ball $B_R$, depending on the structure of the infinite regions of $\E_0$ (see Figure~\ref{fig:eye}).

\begin{definition} \label{def:eye}
Let $\E_0=(\E_0(1),\ldots,\E_0(N))$ be a uniquely-minimizing locally isoperimetric partition in $\R^2$ with $k$ regions with infinite area, where $k$ is either 2 or 3.
\begin{enumerate}
    \item Let $R\gg1$ be such that $\partial\E_0\setminus B_R$ is the union of $k$ half-lines, according to Theorem~\ref{thm:regularity}.
    By possibly taking a larger $R$, we can assume that each half-line intersects $\partial B_R$ with an angle close to $\frac\pi2$ (say, between $\frac\pi3$ and $\frac{2\pi}{3}$).
    \item Let $L$ be any of these $k$ half-lines, meeting $\partial B_R$ at a point $p$. Let $\partial B_R \cap \partial B_{1/2}(p)=\{q_1,q_2\}$, and let $(L\cap\partial B_{1/2}(p))\setminus B_R=\{q_0\}$. For $i=1,2$, connect the point $q_i$ to $q_0$ by a smooth curve $\Gamma_i\subset \overline{B_{1/2}(p)}\setminus B_R$, meeting $\partial B_R$ at $q_i$ and $L$ at $q_0$ in a $C^2$-way, and intersecting $L$ only at $q_0$. Let $E_L$ be the region enclosed by the curves $\Gamma_1$, $\Gamma_2$, and $\partial B_{R}\cap B_{1/2}(p)$.
    \item Let $O_R$ be the open set obtained by the union of $B_R$ with each of the $k$ sets $E_L$ constructed in the previous point.
\end{enumerate}
Notice that $B_R\subset O_R \ssubset B_{R+1}$, and the boundary of $O_R$ is a curve of class $C^2$, except for $k$ cusp points at the intersection $\partial O_R\cap\partial\E_0$.
\end{definition}

\begin{figure}[ht]
\begin{tikzpicture}[scale=1.6,line cap=round,line join=round]
    \clip(-2,-1.7) rectangle (2,1.8);
    \draw [fill=black] (0,0) circle (0.5pt);
    \draw [fill=black] (1.5,0) circle (0.5pt);	
	\node [below] at (1.5,0) {$q_0$};
    \draw [fill=black] (1,0) circle (0.5pt);	
	\node [below left] at (1,0) {$p$};
    \draw [fill=black] ({cos(28.96)},{sin(28.96)}) circle (0.5pt);	
	\node [left] at ({cos(31.7)},{sin(31.7)}) {$q_1$};
    \draw [fill=black] ({cos(28.96)},{-sin(28.96)}) circle (0.5pt);	
	\node [left] at ({cos(31.7)},{-sin(31.7)}) {$q_2$};
    \draw[dashed] (0,0) circle (1);
    \draw[dashed] (1,0) circle (0.5);
    \draw[dashed] (-1.5,0) -- (1.5,0);
    \draw[thick] (-2,0) -- (-1.5,0);    
    \draw[thick] (1.5,0) -- (2,0);       
    \draw[thick] ({cos(31.7)},{sin(31.7)}) arc (31.7:148.3:1);
    \draw[thick] ({cos(211.7)},{sin(211.7)}) arc (211.7:328.3:1);
	\draw[scale=1,domain=0.8508:1.5,smooth,variable=\x,thick] plot ({\x},{1.2468*(\x-1.5)*(\x-1.5)});    
	\draw[scale=1,domain=0.8508:1.5,smooth,variable=\x,thick] plot ({\x},{-1.2468*(\x-1.5)*(\x-1.5)});    
	\draw[scale=1,domain=-1.5:-0.8508,smooth,variable=\x,thick] plot ({\x},{1.2468*(\x+1.5)*(\x+1.5)});    
	\draw[scale=1,domain=-1.5:-0.8508,smooth,variable=\x,thick] plot ({\x},{-1.2468*(\x+1.5)*(\x+1.5)});    			
\end{tikzpicture}
\qquad\qquad
\begin{tikzpicture}[scale=1.5,line cap=round,line join=round,declare function={myfun(\x)  = 1.2468*(\x-1.5)*(\x-1.5);}]
    \draw [fill=black] (0,0) circle (0.5pt);
    \draw [fill=black] (1.5,0) circle (0.5pt);	
	\node [below] at (1.5,0) {$q_0$};
    \draw [fill=black] (1,0) circle (0.5pt);	
	\node [below left] at (1,0) {$p$};
    \draw [fill=black] ({cos(28.96)},{sin(28.96)}) circle (0.5pt);	
	\node [left] at ({cos(31.7)},{sin(31.7)}) {$q_1$};
    \draw [fill=black] ({cos(28.96)},{-sin(28.96)}) circle (0.5pt);	
	\node [left] at ({cos(31.7)},{-sin(31.7)}) {$q_2$};
    \draw[dashed] (0,0) circle (1);
    \draw[dashed] (1,0) circle (0.5);
    \draw[dashed] (0,0) -- (1.5,0);
    \draw[thick] (1.5,0) -- (2,0);
    \draw[dashed] (0,0) -- ({1.5*cos(120)},{1.5*sin(120)});
    \draw[thick] ({1.5*cos(120)},{1.5*sin(120)}) -- ({2*cos(120)},{2*sin(120)});    
    \draw[dashed] (0,0) -- ({1.5*cos(240)},{1.5*sin(240)});             
    \draw[thick] ({1.5*cos(240)},{1.5*sin(240)}) -- ({2*cos(240)},{2*sin(240)});        
    \draw[thick] ({cos(31.7)},{sin(31.7)}) arc (31.7:88.3:1);
    \draw[thick] ({cos(151.7)},{sin(151.7)}) arc (151.7:208.3:1);    
    \draw[thick] ({cos(271.7)},{sin(271.7)}) arc (271.7:328.3:1); 
	\draw[scale=1,domain=0.8508:1.5,smooth,variable=\x,thick] plot ({\x},{myfun(\x)});    	  
	\draw[scale=1,domain=0.8508:1.5,smooth,variable=\x,thick] plot ({\x},{-myfun(\x)});    	  	
	\draw[scale=1,domain=0.8508:1.5,smooth,variable=\x,thick] plot ({cos(120)*\x-sin(120)*myfun(\x)},{sin(120)*\x + cos(120)*myfun(\x)});
	\draw[scale=1,domain=0.8508:1.5,smooth,variable=\x,thick] plot ({cos(120)*\x+sin(120)*myfun(\x)},{sin(120)*\x - cos(120)*myfun(\x)});  
	\draw[scale=1,domain=0.8508:1.5,smooth,variable=\x,thick] plot ({cos(240)*\x-sin(240)*myfun(\x)},{sin(240)*\x + cos(240)*myfun(\x)});
	\draw[scale=1,domain=0.8508:1.5,smooth,variable=\x,thick] plot ({cos(240)*\x+sin(240)*myfun(\x)},{sin(240)*\x - cos(240)*myfun(\x)});  
\end{tikzpicture}
\caption{The set $O_R$ constructed in Definition~\ref{def:eye}, depending whether $\E_0$ has two infinite regions (left) or three infinite regions (right).}\label{fig:eye}
\end{figure}

\begin{theorem} \label{thm:selection-principle}
Let $\E_0=(\E_0(1),\ldots,\E_0(N))$ be a uniquely-minimizing locally isoperimetric partition in $\R^2$ with at least two infinite regions, and let $\Rba>0$.

There exist a radius $R\geq \Rba$, positive constants $\Lambda>0$ and $r_0>0$ (all depending only on $\E_0$ and $\Rba$), and a sequence $(\E_k)_{k\in\N}\subset\comp_{R+1}(\E_0)$ of $(\Lambda,r_0)$-minimizing partitions, such that $\asymm{\E_k(i)}{\E_0(i)}\ssubset B_{R+1}$ for all $i$, and
\begin{equation} \label{eq:selection-principle}
    \Asymm{\E_k}{\E_0}>0 \quad\text{for all }k\in\N, \qquad \lim_{k\to\infty}\Asymm{\E_k}{\E_0}=0, \qquad \lim_{k\to\infty}\frac{\defi{\E_k}{\E_0}}{\Asymm{\E_k}{\E_0}^2} = \kappa_{\Rba}(\E_0).
\end{equation}
\end{theorem}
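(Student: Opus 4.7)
\medskip
\noindent\emph{Proof plan.}
The strategy follows the Cicalese–Leonardi selection principle \cite{CicLeo12,CicLeoMag16,CicLeoMag17}, adapted to the present setting. Start from a sequence $(\F_k)_{k\in\N}\subset\comp_{\Rba}(\E_0)$ attaining the infimum in \eqref{eq:kappa}, so that $\Asymm{\F_k}{\E_0}>0$, $\dis{\F_k}{\E_0}\to 0$, and $\defi{\F_k}{\E_0}/\Asymm{\F_k}{\E_0}^2\to\kappa_{\Rba}(\E_0)$. By Theorem~\ref{thm:regularity}, outside a sufficiently large ball $\partial\E_0$ is a union of half-lines. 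Applying the translation/copy-and-paste argument already used in the proof of Theorem~\ref{thm:small-asymm}, we may replace each $\F_k$ by an isometric copy that fits inside a fixed ball $B_R$ (with $R\geq\Rba$ depending only on $\E_0$ and $\Rba$), and moreover we may fix the isometry in \eqref{eq:asymm} so that $\Asymm{\F_k}{\E_0}=\dis{\F_k}{\E_0}=:\alpha_k$.

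The central idea is to replace $\F_k$ by minimizers of a penalized functional designed so that the symmetric-difference distance from $\E_0$ is essentially pinned to $\alpha_k$. Working in the rounded neighbourhood $O_{R+1/2}$ from Definition~\ref{def:eye} (whose boundary is $C^2$ except at the cusps on $\partial\E_0$), for constants $\Lambda_1,\Lambda_2>0$ to be chosen, consider
\begin{equation*}
\mathcal{J}_k(\E) \defeq \per(\E;O_{R+1/2})-\per(\E_0;O_{R+1/2}) + \Lambda_1\sum_{i=1}^N\bigl||\E(i)\cap O_{R+1/2}|-|\E_0(i)\cap O_{R+1/2}|\bigr| + \Lambda_2\bigl|\dis{\E}{\E_0}-\alpha_k\bigr|,
\end{equation*}
minimized among partitions $\E$ with $\asymm{\E(i)}{\E_0(i)}\ssubset O_{R+1/2}$. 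Existence of a minimizer $\E_k$ follows from the direct method. A standard volume-fixing variation argument (as in Esposito–Fusco) produces, for $\Lambda_1$ large depending only on $\E_0$ and $R$, an exact volume constraint $|\E_k(i)|=|\E_0(i)|$; thus $\E_k\in\comp_{R+1}(\E_0)$. Testing $\mathcal{J}_k(\E_k)\leq\mathcal{J}_k(\F_k)=\defi{\F_k}{\E_0}$ gives $\defi{\E_k}{\E_0}\leq\defi{\F_k}{\E_0}$ and $|\dis{\E_k}{\E_0}-\alpha_k|\leq\defi{\F_k}{\E_0}/\Lambda_2$. Since $\defi{\F_k}{\E_0}$ is of order $\alpha_k^2=o(\alpha_k)$, we obtain $\dis{\E_k}{\E_0}=\alpha_k(1+o(1))\to0$ with $\dis{\E_k}{\E_0}>0$ for large $k$, and therefore $\Asymm{\E_k}{\E_0}>0$ as well. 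Admissibility of $(\E_k)$ in \eqref{eq:kappa} then forces $\defi{\E_k}{\E_0}/\Asymm{\E_k}{\E_0}^2\to\kappa_{\Rba}(\E_0)$, giving \eqref{eq:selection-principle}.

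Finally, one checks $(\Lambda,r_0)$-minimality: if $\G$ is a partition with $\asymm{\G(i)}{\E_k(i)}\ssubset B_{r_0}(x)$ for some $x\in\R^2$ and $r_0$ small (chosen so that any such ball either lies in $O_{R+1/2}$ or lies outside, away from the cusps), then the volume and asymmetry penalties in $\mathcal{J}_k$ are Lipschitz in $\dis{\E_k}{\G}$; minimality of $\E_k$ then yields \eqref{eq:quasi-min} with $\Lambda\defeq N\Lambda_1+\Lambda_2$. Balls touching the cusps can be handled by enlarging $R$ slightly, or observing that there $\G$ must coincide with $\E_0$ so the inequality is trivial. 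The main technical difficulty is ensuring the volume-penalty constant $\Lambda_1$ can be chosen uniformly in $k$: this rests on the fact that $\E_0$ has positive-density regular boundary pieces in $O_{R+1/2}$, on which one can build uniformly-bounded volume-adjusting perturbations that remain valid for all partitions $L^1$-close to $\E_0$, which is guaranteed by $\dis{\E_k}{\E_0}\to 0$.
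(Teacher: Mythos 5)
Your overall architecture (localize the recovery sequence by the copy--and--paste argument, then replace it by minimizers of a penalized problem with Dirichlet condition outside the rounded set $O_R$ of Definition~\ref{def:eye}, then read off quasi-minimality from the penalization) is the same as the paper's. However, there is a genuine gap in the choice of the penalization: you penalize $\bigl|\dis{\E}{\E_0}-\alpha_k\bigr|$, i.e.\ the distance to the \emph{fixed representative} $\E_0$, while the quantity that must be pinned is the isometry-invariant asymmetry $\Asymm{\E}{\E_0}$. When $\E_0$ has two infinite regions (in particular for the lens, the case needed for Theorem~\ref{thm:stability-lens}), the interface between them is a straight line outside a compact set, so a small translation $T_t$ of $\E_0$ along that line is admissible in your minimization: $\asymm{T_t(\E_0)(i)}{\E_0(i)}\ssubset O_{R+1/2}$, all volumes are exact, the perimeter deficit vanishes, and $\dis{T_t(\E_0)}{\E_0}$ can be tuned to equal $\alpha_k$. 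Hence your functional $\mathcal{J}_k$ attains the value $0<\mathcal{J}_k(\F_k)$ at such a translate, and its minimizers can satisfy $\Asymm{\E_k}{\E_0}=0$ (or $\ll\alpha_k$), destroying both the requirement $\Asymm{\E_k}{\E_0}>0$ and the convergence of the ratio in \eqref{eq:selection-principle}. In particular the inference ``$\dis{\E_k}{\E_0}>0$, therefore $\Asymm{\E_k}{\E_0}>0$'' is false, since $\Asymm{\cdot}{\E_0}\leq\dis{\cdot}{\E_0}$ and $\Asymm{\cdot}{\E_0}$ vanishes on every isometric copy of $\E_0$. This is exactly why the paper's minimum problem \eqref{proof:selection-principle-4} penalizes $\bigl|\Asymm{\E}{\E_0}-\Asymm{\F_k}{\E_0}\bigr|^{3/2}$: comparison with $\F_k$ then pins the invariant asymmetry itself, giving $|\Asymm{\E_k}{\E_0}-\Asymm{\F_k}{\E_0}|=o(\Asymm{\F_k}{\E_0})$.

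Two further points are only sketched and are in fact the technical core. First, the $(\Lambda,r_0)$-minimality check is incomplete precisely where $B_{r_0}(x)$ meets $\partial O_{R+1/2}$: no choice of small $r_0$ makes every such ball lie entirely inside or entirely outside $O_{R+1/2}$, and the competitor there is arbitrary (it need not coincide with $\E_0$ near the cusps, which are interior points of the half-lines of $\partial\E_0$). Handling these balls --- restricting the competitor to $O_R$, estimating the perimeter created on $\partial O_R$ and outside via the $C^2$ structure of $\partial O_R$, the cusp construction of Definition~\ref{def:eye}, and Lemma~\ref{lem:riccardo-fav-lemma}, and then restoring the volumes by Lemma~\ref{lem:volume-fixing} --- occupies most of the paper's Step~3 and cannot be dismissed by ``enlarging $R$ slightly''. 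Second, the claim that a uniform constant $\Lambda_1$ turns the soft volume penalty into the exact constraint (needed even to assert $\E_k\in\comp_{R+1}(\E_0)$ and to drop the deficit term when deducing the pinning estimate) is stated but not proved; the paper avoids this issue altogether by imposing the exact volume constraint inside the minimum problem and invoking the volume-fixing variation only when testing quasi-minimality. Until the penalization is replaced by one in terms of $\Asymm{\cdot}{\E_0}$ and these two steps are carried out, the proposal does not prove the theorem.
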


\begin{proof}
We divide the proof into three steps.

\medskip\noindent \emph{Step 1: localization in a large ball.}
Let $(\F_k)_{k\in\N}\subset\comp_{\Rba}(\E_0)$ be a recovery sequence for $\kappa_{\Rba}(\E_0)$ in \eqref{eq:kappa}, that is,
\begin{equation} \label{proof:selection-principle-1}
    \Asymm{\F_k}{\E_0}>0 \quad\text{for all }k\in\N, \qquad \lim_{k\to\infty}\Asymm{\F_k}{\E_0}=0, \qquad \lim_{k\to\infty}\frac{\defi{\F_k}{\E_0}}{\Asymm{\F_k}{\E_0}^2} = \kappa_{\Rba}(\E_0).
\end{equation}
By definition of the class $\comp_{\Rba}(\E_0)$, up to replacing $\F_k$ by an isometric copy, we can find points $x_k\in\R^2$ such that $\asymm{\F_k(i)}{\E_0(i)}\ssubset B_{\Rba}(x_k)$ for all $i\in\{1,\ldots,N\}$. By finiteness of $\kappa_{\Rba}(\E_0)$, it follows from \eqref{proof:selection-principle-1} that as $k\to\infty$
\begin{equation} \label{proof:selection-principle-2}
    \per(\F_k;B_{\Rba}(x_k))-\per(\E_0;B_{\Rba}(x_k)) = \defi{\F_k}{\E_0} = \kappa_{\Rba}(\E_0)\Asymm{\F_k}{\E_0}^2 + o(\Asymm{\F_k}{\E_0}^2),
\end{equation}
so that by assuming $k$ large enough we can bound
\begin{equation} \label{proof:selection-principle-2b}
    \per(\F_k;B_{\Rba}(x_k))-\per(\E_0;B_{\Rba}(x_k)) \leq \bigl(\kappa_{\Rba}(\E_0)+1\bigr)\Asymm{\F_k}{\E_0}^2.
\end{equation}

We claim that there exists $R\geq{\Rba}$, depending only on ${\Rba}$ and $\E_0$, such that we can assume
\begin{equation} \label{proof:selection-principle-3}
B_{\Rba}(x_k)\subset B_{R} \quad\text{for all $k$,}
\qquad
\E_0(i)\ssubset B_{R} \quad\text{for all $i\in I_{\E_0}$.}
\end{equation}
Indeed, since by Theorem~\ref{thm:regularity} the regions of $\E_0$ with finite area are bounded, there exists a radius $R_0>0$ depending only on $\E_0$ such that $\E_0(i)\ssubset B_{R_0}$ for all $i\in I_{\E_0}$, so that the second condition in \eqref{proof:selection-principle-3} is satisfied for all $R>R_0$. Regarding the first condition, we have that either $B_{\Rba}(x_k)\cap \E_0(i)\neq\emptyset$ for some $i\in I_{\E_0}$ (and in this case it is enough to take $R>R_0+2{\Rba}$ to guarantee that $B_{\Rba}(x_k)\subset B_{R}$), or $B_{\Rba}(x_k)\cap \E_0(i)=\emptyset$ for all $i\in I_{\E_0}$. In this second case we can use the same argument as in the proof of Theorem~\ref{thm:small-asymm} to ``copy and paste'' $\F_k\cap B_{\Rba}(x_k)$ into balls $B_{\Rba}(y_k)$ that are contained in a uniform ball $B_{R}$, so that we can assume without loss of generality that also the first condition in \eqref{proof:selection-principle-3} holds.

\medskip\noindent \emph{Step 2: construction of $\E_k$.}
Let $O_{R}$ be the set introduced in Definition~\ref{def:eye}, depending on the structure of the infinite regions of $\E_0$, see in particular Figure~\ref{fig:eye} (notice that, up to taking a larger $R$, the assumption in Definition~\ref{def:eye} is satisfied). We define $\E_k$ as a solution to the following minimum problem:
\begin{multline} \label{proof:selection-principle-4}
    \min\biggl\{ \per(\E;B_{R+1}) + \big| \Asymm{\E}{\E_0}-\Asymm{\F_k}{\E_0}\big|^{3/2} \,:\, |\E(i)|=|\E_0(i)| \text{ and } \\
	|(\asymm{\E(i)}{\E_0(i)})\setminus O_{R}|=0 \text{ for all }i\in\{1,\ldots,N\} \biggr\}
\end{multline}
whose existence follows by the direct method of the calculus of variations. Notice that, since $O_{R}\ssubset B_{R+1}$, we have $\asymm{\E_k(i)}{\E_0(i)}\ssubset B_{R+1}$, and $\E_k$ satisfies the volume constraint as well; hence $\E_k\in\comp_{R+1}(\E_0)$ for all $k$.

Furthermore, $\F_k$ is admissible as competitor in \eqref{proof:selection-principle-4}, hence by minimality of $\E_0$ and by \eqref{proof:selection-principle-2b} we have
\begin{equation} \label{proof:selection-principle-5}
    \begin{split}
    \per(\E_k;B_{R+1}) + \big| \Asymm{\E_k}{\E_0}-\Asymm{\F_k}{\E_0}\big|^{3/2}
    & \leq \per(\F_k;B_{R+1}) \\
    & \leq \per(\E_0;B_{R+1}) + \bigl(\kappa_{\Rba}(\E_0)+1\bigr)\Asymm{\F_k}{\E_0}^2.
    \end{split}
\end{equation}
Since $\per(\E_k;B_{R+1})-\per(\E_0;B_{R+1})\geq 0$ by local minimality of $\E_0$, it follows from \eqref{proof:selection-principle-5}
\begin{equation} \label{proof:selection-principle-6}
   \big| \Asymm{\E_k}{\E_0}-\Asymm{\F_k}{\E_0}\big|^{3/2} \leq \bigl(\kappa_{\Rba}(\E_0)+1\bigr)\Asymm{\F_k}{\E_0}^2.
\end{equation}
In turn, since $0<\Asymm{\F_k}{\E_0}\to0$, dividing by $\Asymm{\F_k}{\E_0}^{3/2}$ in \eqref{proof:selection-principle-6} we find
\begin{equation} \label{proof:selection-principle-7}
    \lim_{k\to\infty}\frac{\Asymm{\E_k}{\E_0}}{\Asymm{\F_k}{\E_0}}=1,
    \qquad 
    \Asymm{\E_k}{\E_0}>0 \quad\text{for all }k\in\N,
    \qquad \lim_{k\to\infty}\Asymm{\E_k}{\E_0}=0.
\end{equation}
Again by \eqref{proof:selection-principle-5} and \eqref{proof:selection-principle-2} we have
\begin{equation}\label{proof:selection-principle-8}
    \begin{split}
    	\defi{\E_k}{\E_0}
        = \per(\E_k;B_{R+1})-\per(\E_0;B_{R+1})
        & \leq \per(\F_k;B_{R+1})-\per(\E_0;B_{R+1}) \\
        & = \kappa_{\Rba}(\E_0)\Asymm{\F_k}{\E_0}^2 + o(\Asymm{\F_k}{\E_0}^2)\\
        & = \kappa_{\Rba}(\E_0)\Asymm{\E_k}{\E_0}^2 + o(\Asymm{\E_k}{\E_0}^2),
    \end{split}
\end{equation}
where the last identity follows from \eqref{proof:selection-principle-7}. In view of \eqref{proof:selection-principle-7} and \eqref{proof:selection-principle-8}, the sequence $(\E_k)_{k\in\N}$ satisfies the conditions in \eqref{eq:selection-principle}.

We further notice for later use that we can find an isometry $T:\R^2\to\R^2$ such that
\begin{equation} \label{proof:selection-principle-9a}
\lim_{k\to\infty}\dis{\E_k}{T(\E_0)}=0,
\qquad\qquad
T(\E_0(i))\subset O_{R} \quad\text{for all }i\in I_{\E_0},
\end{equation}
and
\begin{equation} \label{proof:selection-principle-9b}
|(\asymm{T(\E_0(i))}{\E_0(i)})\setminus O_{R}| = 0 \quad\text{for all }i\in\{1,\ldots,N\}.
\end{equation}
Indeed, let $T_k:\R^2\to\R^2$ be isometries such that $\dis{\E_k}{T_k(\E_0)}=\Asymm{\E_k}{\E_0}\to0$. Notice that $\sup_k|T_k(0)|<\infty$ in view of the condition $\asymm{\E_k(i)}{\E_0(i)}\ssubset B_{R+1}$, so that, up to a subsequence, $T_k$ converge to some isometry $T$. By triangle inequality the first condition in \eqref{proof:selection-principle-9a} is satisfied. The second condition holds since, if by contradiction $|T(\E_0(i))\setminus O_{R}|>0$ for some $i\in I_{\E_0}$, then we would have $|\E_k(i)\setminus O_{R}|>0$ for $k$ sufficiently large, and in turn also $|\E_0(i)\setminus O_{R}|>0$ since $\E_k$ coincides with $\E_0$ outside $O_{R}$, contradicting \eqref{proof:selection-principle-3}. For the same reason, also \eqref{proof:selection-principle-9b} holds.

\medskip\noindent \emph{Step 3: $(\Lambda,r_0)$-minimality.}
We are left to prove that $\E_k$ is a $(\Lambda,r_0)$-minimizing partition for all $k$ according to Definition~\ref{def:quasi-min}, for suitable uniform constants $\Lambda>0$, $r_0>0$. We choose $r_0>0$ given by Lemma~\ref{lem:volume-fixing} corresponding to $T(\E_0)$ (notice that $r_0$ depends ultimately only on $\E_0$ and ${\Rba}$).

Fix $k\in\N$ and let $\F=(\F(1),\ldots,\F(N))$ be a partition such that $\asymm{\E_k(i)}{\F(i)}\ssubset B_{r_0}(x)$ for some $x\in\R^2$. We shall prove the inequality \eqref{eq:quasi-min} for a suitable $\Lambda>0$.

Assume first that $B_{r_0}(x)\cap O_{R}=\emptyset$. Recalling \eqref{proof:selection-principle-3}, $\partial\E_0\cap B_{r_0}(x)$ is either empty or a segment (the boundary among two infinite regions, say $\E_0(i_1)$ and $\E_0(i_2)$). Then, since $|(\asymm{\E_k(i)}{\E_0(i)})\setminus O_{R}|=0$ for all $i$, we have
\begin{align*}
    \per(\E_k; B_{r_0}(x))
    & = \per(\E_0; B_{r_0}(x))
    = \per(\E_0(i_1); B_{r_0}(x))
    \leq \per(\F(i_1); B_{r_0}(x))
    \leq \per(\F; B_{r_0}(x)),
\end{align*}
where the first inequality follows from the fact that $\partial\E_0(i_1)\cap B_{r_0}(x)$ is either empty or a segment, and $\asymm{\F(i_1)}{\E_0(i_1)}\ssubset B_{r_0}(x)$. The previous estimate proves \eqref{eq:quasi-min} in this case (with $\Lambda=0$).

Consider next the case $B_{r_0}(x)\cap O_{R}\neq\emptyset$ and let us prove \eqref{eq:quasi-min} also in this case. By taking $r_0<\frac12$ we can assume that $B_{r_0}(x)\ssubset B_{R+1}$. We modify the partition $\F$ in order to obtain an admissible competitor for the minimum problem \eqref{proof:selection-principle-4}. We have two constraints to satisfy, and we proceed in two steps:
\begin{itemize}
    \item Let $\F'=(\F'(1),\ldots,\F'(N))$ be obtained from $\F$ by setting
    $$
    \F'(i) \defeq \bigl( \F(i)\cap O_{R} \bigr) \cup \bigl( \E_0(i) \setminus O_{R} \bigr) \qquad\text{for all }i\in\{1,\ldots,N\}.
    $$
    Notice that $|(\asymm{\F'(i)}{\E_0(i)})\setminus O_{R}|=0$ and $\asymm{\F'(i)}{\E_k(i)}\ssubset B_{r_0}(x)$.
    \item Let $\F''=(\F''(1),\ldots,\F''(N))$ be obtained from $\F'$ by apply the volume-fixing variation Lemma~\ref{lem:volume-fixing} with $\E=\E_k$, and $\G=\F'$ (that is, we set $\F''=\widetilde{\G}$ obtained by the lemma with the previous choices). The lemma can be applied since $\dis{\E_k}{T(\E_0)}<\e_0$ for $k$ sufficiently large by \eqref{proof:selection-principle-9a}, and $\asymm{\F'(i)}{\E_k(i)}\ssubset B_{r_0}(x)$.
\end{itemize}

Let $C_0$ be the constant given by Lemma~\ref{lem:volume-fixing}, and let $C_1>0$ be such that $\per(\E_k;B_{R+1})\leq C_1$ for all $k$, which exists by \eqref{proof:selection-principle-5}. By the third property in Lemma~\ref{lem:volume-fixing} the partition $\F''$ satisfies the estimate
\begin{equation} \label{proof:qua-min-0a}
\begin{split}
    |\per(\F'';B_{R+1})-\per(\F';B_{R+1})|
    & \leq C_0\per(\E_k;B_{R+1})\sum_{i\in I_{\E_0}}\big| |\F'(i)|-|\E_k(i)| \big| \\
    & \leq C_0C_1\sum_{i\in I_{\E_0}}\big| \asymm{\F'(i)}{\E_k(i)} \big| \\
    & \leq C_0C_1\sum_{i\in I_{\E_0}}\big| \asymm{\F(i)}{\E_k(i)} \big|,   
\end{split}
\end{equation}
where the last passage follows since
\begin{align} \label{proof:qua-min-0}
    \big|\asymm{\F(i)}{\E_k(i)}\big|
    \geq \big| \bigl(\asymm{\F(i)}{\E_k(i)}\bigr)\cap O_{R} \big|
    = \big| \bigl(\asymm{\F'(i)}{\E_k(i)}\bigr)\cap O_{R} \big|
    = \big| \asymm{\F'(i)}{\E_k(i)} \big|.
\end{align}
Similar to \eqref{proof:qua-min-0a}, by the fourth property in Lemma~\ref{lem:volume-fixing} we also have
\begin{equation} \label{proof:qua-min-0b}
    |\dis{\F''}{\E_k}-\dis{\F'}{\E_k}| \leq C_0C_1\sum_{i\in I_{\E_0}}\big| \asymm{\F(i)}{\E_k(i)} \big|.
\end{equation}

Notice that $|(\asymm{\F''(i)}{\E_0(i)})\setminus O_{R}|=0$ for all $i$, since $\F'$ satisfies this property and $\F''$ is obtained by perturbing $\F'$ inside $O_{R}$ (first property in Lemma~\ref{lem:volume-fixing}). Moreover, $|\F''(i)|=|\E_k(i)|=|\E_0(i)|$ for all $i$ (by the second property in Lemma~\ref{lem:volume-fixing}). Hence $\F''$ obeys both constraints in the minimum problem \eqref{proof:selection-principle-4}. Defining
\begin{equation*}
    \Delta_k \defeq \big| \Asymm{\F''}{\E_0}-\Asymm{\F_k}{\E_0}\big|^{3/2} - \big| \Asymm{\E_k}{\E_0}-\Asymm{\F_k}{\E_0}\big|^{3/2},
\end{equation*}
we have, by minimality of $\E_k$ in \eqref{proof:selection-principle-4},
\begin{align} \label{proof:quamin-1}
\per(\E_k;B_{R+1}) 
&\leq \per(\F'';B_{R+1}) + \Delta_k \nonumber\\
& \xupref{proof:qua-min-0a}{\leq} \per(\F';B_{R+1}) + C_0C_1\sum_{i\in I_{\E_0}}\big| \asymm{\F(i)}{\E_k(i)} \big| + \Delta_k \\
& = \per(\F;B_{R+1}) + \Bigl( \per(\F';B_{R+1})-\per(\F;B_{R+1}) \Bigr) + C_0C_1\sum_{i\in I_{\E_0}}\big| \asymm{\F(i)}{\E_k(i)} \big| + \Delta_k. \nonumber
\end{align}
We now estimate $\Delta_k$. By using the elementary inequality $|a^{3/2}-b^{3/2}|\leq\frac32\sqrt{\max\{a,b\}}|a-b|$ for all $a,b\geq0$, and observing that $\Asymm{\F_k}{\E_0}\to0$, $\Asymm{\E_k}{\E_0}\to0$, and $\Asymm{\F''}{\E_0}\leq|O_{R}|$, we have, for a constant $C_2$ depending on $R$,
\begin{align*}
    |\Delta_k|
    & \leq C_2 \big| \Asymm{\F''}{\E_0} - \Asymm{\E_k}{\E_0} \big|
    \leq C_2 \, \dis{\F''}{\E_k} \\
    & \xupref{proof:qua-min-0b}{\leq} C_2\,\dis{\F'}{\E_k} + C_0C_1C_2\sum_{i\in I_{\E_0}}\big| \asymm{\F(i)}{\E_k(i)} \big| \\
    & \leq \frac{C_2}{2}\sum_{i=1}^N \big| \asymm{\F(i)}{\E_k(i)} \big| + C_0C_1C_2\sum_{i\in I_{\E_0}}\big| \asymm{\F(i)}{\E_k(i)} \big|,
\end{align*}
where the last passage follows from \eqref{proof:qua-min-0}.
By inserting this estimate into \eqref{proof:quamin-1} we get, for a constant $C_3$ depending on $\E_0$ and $R$,
\begin{equation} \label{proof:quamin-2}
  \per(\E_k;B_{R+1}) \leq \per(\F;B_{R+1}) + \Bigl( \per(\F';B_{R+1})-\per(\F;B_{R+1}) \Bigr) + C_3\sum_{i=1}^N |\asymm{\E_k(i)}{\F(i)}|.
\end{equation}

It remains to estimate the change in perimeter between $\F$ and $\F'$.
In the following computations, in view of the regularity of $\E_0$ given by Theorem~\ref{thm:regularity}, we assume that the regions $\E_0(i)$ are open sets. By definition of $\F'$ we have
\begin{equation} \label{proof:qua-min-3}
\per(\F';B_{R+1}) - \per(\F;B_{R+1})
= \per(\E_0; B_{R+1}\setminus\overline{O_{R}}) + \per(\F';\partial O_{R}) - \per(\F;B_{R+1}\setminus O_{R}).
\end{equation}
To compute the perimeter of $\F'$ on $\partial O_{R}$, we recall that by \eqref{proof:selection-principle-3} the finite regions of $\E_0(i)$ are compactly contained in $O_{R}$ and that $\per(\E_0;\partial O_{R})=0$, hence we can decompose $\partial O_{R}$, up to a negligible set, into the disjoint union of the sets $\partial O_{R}\cap\E_0(i)$ for $i\in I_{\E_0}^c\defeq \{1,\ldots,N\}\setminus I_{\E_0}$ (the indices corresponding to the infinite regions). Therefore
\begin{align*}
    \per(\F';\partial O_{R})
    & = \sum_{i\in I_{\E_0}^c} \per(\F';\partial O_{R}\cap \E_0(i)) \\
    & = \sum_{i\in I_{\E_0}^c} \Hone\bigl(\partial O_{R}\cap \E_0(i)\cap \F(i)^{(0)}\bigr)
    + \sum_{i\in I_{\E_0}^c} \Hone\bigl( \{\nu_{\F(i)}=-\nu_{O_{R}}\}\cap \E_0(i) \bigr).
\end{align*}
Here $A^{(\theta)}$ denotes the set of points where $A$ has Lebesgue density $\theta\in[0,1]$, and we write $\{\nu_A=\pm\nu_B\}\defeq\{x\in\partial^*A\cap\partial^*B \,:\, \nu_A(x)=\pm\nu_B(x)\}$ for any two sets of finite perimeter $A$ and $B$. Similarly,
\begin{align*}
    \per(\F;B_{R+1} & \setminus O_{R})
    = \sum_{i\in I_{\E_0}^c} \per(\F;\E_0(i)\cap B_{R+1}\setminus O_{R})
    + \frac12\sum_{i\in I_{\E_0}^c} \per(\F;\partial^*\E_0(i)\cap B_{R+1}\setminus O_{R}) \\
    & \geq \sum_{i\in I_{\E_0}^c} \per(\F(i);\E_0(i)\cap B_{R+1}\setminus O_{R})
    + \frac12\sum_{i\in I_{\E_0}^c} \Hone\bigl( \{\nu_{\F(i)}=\nu_{\E_0(i)}\}\cap B_{R+1}\setminus\overline{O_{R}} \bigr).
\end{align*}
By inserting the last two identities into \eqref{proof:qua-min-3} we obtain
\begin{equation} \label{proof:qua-min-4}
\begin{split}
    \per & (\F'; B_{R+1}) - \per(\F;B_{R+1})
    \leq \per(\E_0; B_{R+1}\setminus\overline{O_{R}}) \\
    & + \sum_{i\in I_{\E_0}^c} \Hone\bigl(\partial O_{R}\cap \E_0(i)\cap \F(i)^{(0)}\bigr)
    + \sum_{i\in I_{\E_0}^c} \Hone\bigl( \{\nu_{\F(i)}=-\nu_{O_{R}}\}\cap \E_0(i) \bigr) \\
    & - \sum_{i\in I_{\E_0}^c} \per(\F(i);\E_0(i)\cap B_{R+1}\setminus O_{R})
    - \frac12\sum_{i\in I_{\E_0}^c} \Hone\bigl( \{\nu_{\F(i)}=\nu_{\E_0(i)}\}\cap B_{R+1}\setminus\overline{O_{R}} \bigr).
\end{split} 
\end{equation}

Define now for $i\in I_{\E_0}^c$ the sets
\begin{equation*}
    \G(i) \defeq \bigl(\E_0(i) \cap \F(i)\bigr) \setminus O_{R} .
\end{equation*}
By \cite[Theorem~16.3]{Maggi} we have
\begin{align*}
    \per(\G(i);B_{R+1})
    & = \per(\F(i); \E_0(i)\cap B_{R+1}\setminus\overline{O_{R}})
    + \per(\E_0(i); \F(i)^{(1)}\cap B_{R+1}\setminus\overline{O_R}) \\
    & \qquad + \Hone\bigl( \partial O_R \cap \E_0(i) \cap \F(i)^{(1)} \bigr) \\
    & \qquad + \Hone\bigl( \{\nu_{\F(i)}=\nu_{\E_0(i)}\}\cap B_{R+1}\setminus\overline{O_{R}} \bigr)
    + \Hone\bigl( \{\nu_{\F(i)}=-\nu_{O_{R}}\}\cap\E_0(i)\bigr).
\end{align*}
Inserting this identity into \eqref{proof:qua-min-4} we find
\begin{align*}
  \per(\F';& B_{R+1}) - \per(\F;B_{R+1})  
  \leq \per(\E_0; B_{R+1}\setminus\overline{O_{R}}) \\
    & + \sum_{i\in I_{\E_0}^c} \Hone\bigl(\partial O_{R}\cap \E_0(i)\cap \F(i)^{(0)}\bigr)
    + \sum_{i\in I_{\E_0}^c} \Hone\bigl( \{\nu_{\F(i)}=-\nu_{O_{R}}\}\cap \E_0(i) \bigr) \\
    & - \sum_{i\in I_{\E_0}^c} \per(\F(i);\E_0(i)\cap\partial O_{R})
    + \frac12\sum_{i\in I_{\E_0}^c} \Hone\bigl( \{\nu_{\F(i)}=\nu_{\E_0(i)}\}\cap B_{R+1}\setminus\overline{O_{R}} \bigr) \\
    & + \sum_{i\in I_{\E_0}^c}\per(\E_0(i); \F(i)^{(1)}\cap B_{R+1}\setminus\overline{O_R})
    + \sum_{i\in I_{\E_0}^c} \Hone\bigl( \partial O_R \cap \E_0(i) \cap \F(i)^{(1)} \bigr) \\
    & + \sum_{i\in I_{\E_0}^c}\Hone\bigl( \{\nu_{\F(i)}=-\nu_{O_{R}}\}\cap\E_0(i)\bigr)
     - \sum_{i\in I_{\E_0}^c}\per(\G(i);B_{R+1}).
\end{align*}
We can group together all the terms on $\partial O_R$, whose combination is controlled by $\Hone(\partial O_R)$:
\begin{align*}
    \Hone\bigl( & \partial O_{R}\cap \E_0(i)\cap \F(i)^{(0)}\bigr)
    + 2\Hone\bigl( \{\nu_{\F(i)}=-\nu_{O_{R}}\}\cap \E_0(i) \bigr) \\
    & \qquad
    - \per(\F(i);\E_0(i)\cap\partial O_{R})
    + \Hone\bigl( \partial O_R \cap \E_0(i) \cap \F(i)^{(1)} \bigr) \\
    & \leq \Hone\bigl(\partial O_{R}\cap \E_0(i)\cap \F(i)^{(0)}\bigr)
    + \Hone\bigl( \partial O_R \cap \E_0(i) \cap \F(i)^{(1)} \bigr)
    + \Hone\bigl( \partial O_R \cap \E_0(i) \cap \partial^*\F(i) \bigr) \\
    & = \Hone(\partial O_R\cap\E_0(i)).
\end{align*}
Therefore
\begin{align*}
  \per(\F'; B_{R+1}) - \per(\F;B_{R+1})    
  & \leq \frac12\sum_{i\in I_{\E_0}^c}\per(\E_0(i);B_{R+1}\setminus O_R)
  + \sum_{i\in I_{\E_0}^c}\Hone\bigl(\partial O_R \cap \E_0(i)\bigr) \\
  & \qquad + \frac12\sum_{i\in I_{\E_0}^c} \Hone\bigl( \{\nu_{\F(i)}=\nu_{\E_0(i)}\}\cap B_{R+1}\setminus\overline{O_{R}} \bigr) \\
  & \qquad + \sum_{i\in I_{\E_0}^c}\per(\E_0(i); \F(i)^{(1)}\cap B_{R+1}\setminus\overline{O_R})
  - \sum_{i\in I_{\E_0}^c}\per(\G(i);B_{R+1}).
\end{align*}
Observe now that
\begin{multline*}
    \sum_{i\in I_{\E_0}^c}\biggl[ \frac12\Hone\bigl( \{\nu_{\F(i)}=\nu_{\E_0(i)}\}\cap B_{R+1}\setminus\overline{O_{R}} \bigr)
    + \per(\E_0(i); \F(i)^{(1)}\cap B_{R+1}\setminus\overline{O_R}) \biggr] \\
    = \sum_{i\in I_{\E_0}^c}\frac12\per(\E_0(i);B_{R+1}\setminus O_R),
\end{multline*}
which yields
\begin{align*}
    \per(\F'; B_{R+1}) - \per(\F;B_{R+1})  
    & \leq \sum_{i\in I_{\E_0}^c} \biggl( \per(\E_0(i); B_{R+1}\setminus O_R) + \Hone\bigl(\partial O_R \cap \E_0(i)\bigr) - \per(\G(i);B_{R+1})\biggr) \\
    & = \sum_{i\in I_{\E_0}^c} \biggl( \per(\E_0(i)\setminus O_R; B_{R+1}) - \per(\G(i);B_{R+1})\biggr).
\end{align*}
For each $i\in I_{\E_0}^c$ the set $\E_0(i)\setminus O_R$ has boundary of class $C^2$. By a standard result, that we recall in Lemma~\ref{lem:riccardo-fav-lemma} below, any set with boundary of class $C^2$ is a quasi-minimizer of the perimeter for a suitably large constant, depending on the set itself: that is, we can find a constant $C_4>0$, depending ultimately only on $\E_0$ and $\Rba$, such that
\begin{equation} \label{proof:qua-min-5}
\begin{split}
    \per(\F'; B_{R+1}) - \per(\F;B_{R+1})  
    & \leq C_4\sum_{i\in I_{\E_0}^c}\big| \asymm{(\E_0(i)\setminus O_R)}{\G(i)}\big| \\
    & = C_4\sum_{i\in I_{\E_0}^c}\big| |(\E_0(i)\setminus\F(i)) \cap (B_{R+1}\setminus O_R) \big| \\
    & \leq C_4 \sum_{i\in I_{\E_0}^c}\big| \asymm{\E_k(i)}{\F(i)} \big|,
\end{split}
\end{equation}
where we used the fact that $|(\asymm{\E_k(i)}{\E_0(i)})\setminus O_R|=0$ in the last inequality.

By inserting \eqref{proof:qua-min-5} into \eqref{proof:quamin-2} we finally obtain the quasi-minimality inequality \eqref{eq:quasi-min} with $\Lambda\defeq C_3+C_4$ depending only on $\E_0$ and $\Rba$.
\end{proof}

The following property, used in the proof of Theorem~\ref{thm:selection-principle}, is well-known to the experts (see for instance \cite[Lemma~4.1]{AceFusMor13}). Here we state and prove it in the setting of the relative perimeter for the convenience of the reader.

\begin{lemma} \label{lem:riccardo-fav-lemma}
Let $E\subset\R^d$ be an open set with boundary of class $C^2$ and let $\rho>0$. Then there exists a constant $C>0$, depending only on $E$ and $\rho$, such that for every set of finite perimeter $F\subset\R^d$ with $\asymm{E}{F}\ssubset B_\rho$ one has
\begin{equation*}
    \per(E;B_\rho) \leq \per(F;B_\rho) + C |\asymm{E}{F}|.
\end{equation*}
\end{lemma}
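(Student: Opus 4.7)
The lemma is a standard quasi-minimality property, and the natural strategy is via a \emph{calibration}: we construct a smooth vector field that extends the outer unit normal of $E$ and has bounded divergence, then compare $E$ and $F$ through the divergence theorem.

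The first step is to build the calibration. Since $\partial E$ is of class $C^2$, the signed distance function $d_E(x)\defeq\dist(x,E)-\dist(x,E^c)$ is of class $C^2$ in a one-sided tubular neighborhood $U_\delta\defeq\{|d_E|<\delta\}$ for some $\delta=\delta(E,\rho)>0$ (we only need this regularity in a neighborhood of $\overline{B_\rho}$, which is why the constant may a priori depend on $\rho$). On $U_\delta$ the vector field $\nabla d_E$ is $C^1$, has unit length, and coincides with $\nu_E$ on $\partial E$. Choose a cutoff $\eta\in C^\infty_c(\R)$ with $\eta\equiv 1$ near $0$ and $\mathrm{supp}(\eta)\ssubset(-\delta,\delta)$, and a spatial cutoff $\chi\in C^\infty_c(\R^d)$ with $\chi\equiv 1$ on $\overline{B_\rho}$. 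Define
\[
X(x)\defeq\chi(x)\,\eta(d_E(x))\,\nabla d_E(x)\qquad\text{on }U_\delta,\qquad X\equiv 0\text{ elsewhere}.
\]
Then $X\in C^1_c(\R^d;\R^d)$, $|X|\leq 1$ everywhere, $X=\nu_E$ on $\partial E\cap B_\rho$, and a direct computation using $|\nabla d_E|=1$ gives $\dive X=\chi\eta'(d_E)+\chi\eta(d_E)\Delta d_E+\eta(d_E)\nabla\chi\cdot\nabla d_E$, which is uniformly bounded by some constant $C=C(E,\rho)$ (here $\Delta d_E$ controls the principal curvatures of $\partial E$ in $U_\delta$).

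The second step is the comparison. Applying the divergence theorem to the compactly supported $X$ on the sets of finite perimeter $E\cap B_{\rho+1}$ and $F\cap B_{\rho+1}$ (or equivalently, integrating by parts against $\chi_E$ and $\chi_F$) gives
\[
\int_{\partial^*E}X\cdot\nu_E\,\dd\Hd=\int_E\dive X\,\dd x,\qquad \int_{\partial^*F}X\cdot\nu_F\,\dd\Hd=\int_F\dive X\,\dd x.
\]
Since $\asymm{E}{F}\ssubset B_\rho$, the sets $E$ and $F$ agree in an open neighborhood of $\partial B_\rho$ and outside $B_\rho$, so the contributions to the two boundary integrals coming from $\partial^*E\setminus B_\rho$ and $\partial^*F\setminus B_\rho$ coincide. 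Using $X=\nu_E$ on $\partial^*E\cap B_\rho$ we obtain $\int_{\partial^*E\cap B_\rho}X\cdot\nu_E\,\dd\Hd=\per(E;B_\rho)$, while $|X|\leq 1$ gives $\int_{\partial^*F\cap B_\rho}X\cdot\nu_F\,\dd\Hd\leq\per(F;B_\rho)$. Subtracting the two identities above and estimating the right-hand side yields
\[
\per(E;B_\rho)-\per(F;B_\rho)\leq\int_{E\setminus F}\dive X\,\dd x-\int_{F\setminus E}\dive X\,\dd x\leq\|\dive X\|_\infty|\asymm{E}{F}|,
\]
which is the desired inequality with $C\defeq\|\dive X\|_\infty$.

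The only mildly delicate point is verifying the size of the tubular neighborhood $\delta$ and bounding $\Delta d_E$ on $U_\delta$: these quantities depend on the reach of $\partial E$ and the maximum of its second fundamental form over a neighborhood of $\overline{B_\rho}$, which is where the dependence of $C$ on both $E$ and $\rho$ comes in. Everything else is a routine application of the divergence theorem for sets of finite perimeter.
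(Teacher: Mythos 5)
Your proposal is correct and follows essentially the same route as the paper: compare $E$ and $F$ via the divergence theorem applied to a $C^1$ vector field $X$ with $X=\nu_E$ on $\partial E$, $|X|\leq 1$, and bounded divergence, using the compact containment $\asymm{E}{F}\ssubset B_\rho$ to cancel the contributions outside $B_\rho$. The only difference is that you construct $X$ explicitly through the signed distance function and cutoffs, whereas the paper simply invokes the existence of such an extension; this is a harmless (indeed welcome) elaboration, not a different argument.
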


\begin{proof}
Let $X\in C^1(\R^d;\R^d)$ be a vector field such that $X=\nu_E$ on $\partial E$ and $\|X\|_\infty\leq1$. Then
\begin{align*}
    \per(E;B_\rho) & - \per(F;B_\rho)
    \leq \int_{\partial E\cap B_\rho} X\cdot\nu_E\dd\Hd - \int_{\partial^*F\cap B_\rho} X\cdot\nu_F\dd\Hd \\
    & = \int_{E\cap B_\rho}\dive X \dd x - \int_{F\cap B_\rho}\dive X \dd x
    \leq \|\dive X \|_{L^\infty(B_\rho)}|\asymm{E}{F}|,
\end{align*}
where we used the fact that, in applying the divergence theorem, all the terms on $\partial B_\rho$ cancel out due to the assumption $\asymm{E}{F}\ssubset B_{\rho}$.
\end{proof}


\subsection{Improved convergence} \label{subsec:improved-conv}
The next step of the strategy exploits the \emph{improved convergence theorem} for clusters by Cicalese, Leonardi and Maggi \cite{CicLeoMag16}, which allows to conclude that the partitions $\E_k$ of the recovery sequence for $\kappa_R(\E_0)$ constructed in Theorem~\ref{thm:selection-principle} are actually smooth perturbations of $\E_0$. Combined with Theorem~\ref{thm:small-asymm}, this reduces the proof of the stability inequality for $\E_0$ to a suitable class of smooth perturbations of $\E_0$. We premise some notation for smooth planar partitions, following \cite{CicLeoMag16}.

\begin{definition} \label{def:smooth-partitions}
A partition $\E=(\E(1),\ldots,\E(N))$ in $\R^2$ is a \emph{$C^{k,\alpha}$-partition} ($k\in\N$, $\alpha\in[0,1]$) if there exist a finite set of points $\{p_i\}_{i\in I}$ and a finite family $\{\gamma_j\}_{j\in J}$ of closed, connected $C^{k,\alpha}$-curves with boundary such that
$$
\partial\E = \bigcup_{j\in J}\gamma_j,
\qquad
\partial^*\E = \bigcup_{j\in J}\mathrm{int}(\gamma_j),
\qquad
\Sigma(\E) = \bigcup_{j\in J}\mathrm{bd(\gamma_j)}=\bigcup_{i\in I}\{p_i\}.
$$
\end{definition}

For a $C^{k,\alpha}$-partition $\E=(\E(1),\ldots,\E(N))$, with $\{p_i\}_{i\in I}$ and $\{\gamma_j\}_{j\in J}$ as in Definition~\ref{def:smooth-partitions}, we say that $f\in C^{k,\alpha}(\pt\E;\R^2)$ if $f\colon \pt\E \to \R^2$ is continuous, $f\in C^{k,\alpha}(\gamma_j;\R^2)$ for every $j\in J$, and
    \[
        \| f \|_{C^{k,\alpha}(\pt\E)} \defeq \sup_{j\in J} \| f \|_{C^{k,\alpha}(\gamma_j)} < \infty.
    \]
Moreover, given two $C^{k,\alpha}$-partitions $\E$ and $\F$, we say that a map $f\colon \pt\E \to \pt\F$ is a \emph{$C^{k,\alpha}$-diffeomorphism between $\partial\E$ and $\partial\F$} if $f$ is a homeomorphism such that $f \in C^{k,\alpha}(\pt \E;\R^2)$, $f^{-1}\in C^{k,\alpha}(\pt\F;\R^2)$, and $f(\Sigma(\E))=\Sigma(\F)$.

Given a partition $\E$ in $\R^2$ and a map $f:\R^2\to\R^2$, we also define the \emph{tangential component of $f$ with respect to $\E$} as the map $\tau_{\E}f:\partial^*\E\to\R^2$ given by
\begin{equation*}
    \tau_{\E}f (x) \defeq f(x) - \bigl(f(x)\cdot\nu_{\E}(x)\bigr)\nu_{\E}(x), \qquad x\in\partial^*\E,
\end{equation*}
where $\nu_{\E}:\partial^*\E\to\mathbb{S}^1$ is any Borel function such that either $\nu_{\E}(x)=\nu_{\E(i)}(x)$ or $\nu_{\E}(x)=\nu_{\E(j)}(x)$ for $x\in\E(i,j)$, $i\neq j$.

With these positions, we can state the main result of this section, which is a direct consequence of the improved convergence theorem in \cite{CicLeoMag16}.

\begin{theorem} \label{thm:improved-conv}
Let $\E_0=(\E_0(1),\ldots,\E_0(N))$ be a uniquely-minimizing locally isoperimetric partition in $\R^2$ with at least two infinite regions, and fix $\Rba>0$.

Then there exist constants $R>0$, $\mu_0>0$, $C_0>0$ (depending only on $\E_0$ and $\Rba$), and a sequence of $C^{1,1}$-partitions $(\F_k)_{k\in\N}\subset\comp_{R}(\E_0)$ such that
\begin{equation} \label{eq:improved-conv}
    \kappa_{\Rba}(\E_0) = \lim_{k\to\infty}\frac{\defi{\F_k}{\E_0}}{\Asymm{\F_k}{\E_0}^2}
    \qquad\text{and}\qquad
    \lim_{k\to\infty}\dis{\F_k}{\E_0}=0,
\end{equation}
and for every $\mu\in(0,\mu_0)$ there exists $k(\mu)\in\N$ and a sequence of $C^{1,1}$-diffeomorphisms $(f_k)_{k\geq k(\mu)}$ between $\partial\E_0$ and $\partial\F_k$ with the following properties:
\begin{enumerate} \setlength\itemsep{6pt}
    \item $\supp(f_k-\id) \ssubset B_R$,
    \item $\| f_k \|_{C^{1,1}(\pt \E_0)} \leq C_0$,
    \item $\| f_k - \id \|_{C^1(\pt \E_0)} \to 0$ as $k\to\infty$,
    \item $\tau_{\E_0}(f_k - \id) =0$ on $\pt \E_0 \setminus I_{\mu}(\Sigma(\E_0))$, where $I_{\mu}(\Sigma(\E_0))\defeq\{x \in \R^2 \colon  \dist(x,\Sigma(\E_0)) < \mu\}$,
    \item $\| \tau_{\E_0}(f_k-\id)\|_{C^1(\pt^*\E_0)} \leq \frac{C_0}{\mu}\| f-\id \|_{C^0(\Sigma(\E_0))}$.
\end{enumerate}
\end{theorem}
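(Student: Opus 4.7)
The strategy is to combine the selection principle just proved (Theorem~\ref{thm:selection-principle}) with the improved convergence theorem of Cicalese--Leonardi--Maggi \cite{CicLeoMag16,LeoMag17}, which applies to sequences of $(\Lambda, r_0)$-minimizing partitions with uniform constants.

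First, I would apply Theorem~\ref{thm:selection-principle} to the given $\E_0$ and $\Rba$, obtaining a radius $R \ge \Rba$, uniform constants $\Lambda, r_0 > 0$, and a sequence $(\E_k) \subset \comp_{R+1}(\E_0)$ of $(\Lambda, r_0)$-minimizing partitions satisfying $\asymm{\E_k(i)}{\E_0(i)} \ssubset B_{R+1}$ together with the recovery conditions \eqref{eq:selection-principle}. Up to composing each $\E_k$ with the isometry $T_k$ realizing the infimum in $\Asymm{\E_k}{\E_0}$ (as in \eqref{proof:selection-principle-9a}), we may also assume $\dis{\E_k}{\E_0}\to 0$, which gives the two limits in \eqref{eq:improved-conv} with $\F_k \defeq \E_k$.

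Second, I would invoke the improved convergence theorem on this sequence. In the cluster setting of \cite[Theorem~1.5]{CicLeoMag16} and \cite{LeoMag17}, if a sequence of uniformly $(\Lambda, r_0)$-minimizing partitions converges in $L^1$ to a smooth reference partition $\E_0$, then for $k$ sufficiently large each $\E_k$ is a $C^{1,1}$-small normal graph over $\partial \E_0$; precisely, there exist $C^{1,1}$-diffeomorphisms $f_k : \partial \E_0 \to \partial \F_k$ with $\|f_k\|_{C^{1,1}} \le C_0$, $\|f_k - \id\|_{C^1} \to 0$, and the displacement purely normal away from a $\mu$-tubular neighbourhood of the singular set $\Sigma(\E_0)$. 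This directly yields properties (ii)--(v) (property (v) is the standard $C^1$-bound on a cutoff-type tangential correction near the triple points, scaling like $1/\mu$ times the $C^0$-smallness of $f_k-\id$ on $\Sigma(\E_0)$).

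The only nontrivial point is that \cite{CicLeoMag16,LeoMag17} are written for bubble clusters, whereas here $\E_0$ has at least two infinite regions. This is dealt with exactly through the set $O_R$ of Definition~\ref{def:eye} used in the selection principle: by Theorem~\ref{thm:regularity} and the inclusion $\asymm{\E_k(i)}{\E_0(i)} \ssubset B_{R+1}$, the partitions $\E_k$ agree with $\E_0$ outside $O_R$, where $\partial \E_0$ consists of two or three straight half-lines. The improved convergence argument is then carried out inside a larger ball containing $O_R \ssubset B_{R+1}$, with the straight half-lines as fixed boundary data; the regularity theory near the triple junctions (Taylor's classification, infinitesimal minimality of $\mathbf{Y}$-configurations) is purely local and identical to the cluster case. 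This localization also yields property (i), namely $\supp(f_k - \id) \ssubset B_R$ (after possibly enlarging $R$ so that $O_R \ssubset B_R$). The main obstacle is precisely this adaptation of the improved convergence theorem to accommodate the non-compact straight-line interfaces, but because $(\Lambda, r_0)$-minimality is checked only in balls of radius $r_0$ and these straight portions are already extremal configurations for the perimeter, the adaptation is bookkeeping rather than a new argument.
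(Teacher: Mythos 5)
Your proposal is correct and follows essentially the same route as the paper: take the $(\Lambda,r_0)$-minimizing recovery sequence from Theorem~\ref{thm:selection-principle}, compose with isometries so that $\dis{\F_k}{\E_0}\to0$ while the deficit/asymmetry ratio is unchanged, and then invoke the improved convergence theorem \cite[Theorem~1.5]{CicLeoMag16}, justified by the locality of its arguments (the paper's proof is in fact terser on the adaptation to infinite regions than your sketch, which spells out the localization inside $O_R$ and the role of the straight half-lines).
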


\begin{proof}
Let $R>0$ and $(\E_k)_{k\in\N}\subset\comp_{R+1}(\E_0)$ be the sequence of $(\Lambda,r_0)$-minimizing partitions given by Theorem~\ref{thm:selection-principle}. We can find isometries $T_k:\R^2\to\R^2$ such that $\F_k\defeq T_k(\E_k)$ satisfy the conditions \eqref{eq:improved-conv}. Notice that, since $\asymm{\E_k(i)}{\E_0(i)}\ssubset B_{R+1}$ for all $i$, up to taking a larger $R$ we can assume that the same condition is satisfied by $\F_k$.

Since $\E_0$ is in particular a $C^{2,1}$-partition by Theorem~\ref{thm:regularity}, and $(\F_k)_k$ are $(\Lambda,r_0)$-minimizing partitions such that $\dis{\F_k}{\E_0}\to0$, the conclusion follows from \cite[Theorem~1.5]{CicLeoMag16} by noticing that all of the arguments in there are local and can be adapted to our case.
\end{proof}


\section{Stability of the lens among smooth perturbations} \label{sec:step1}

In this section we prove Theorem~\ref{thm:stability-lens} among smooth perturbations of the standard lens partition. In view of Theorem~\ref{thm:improved-conv}, we introduce the following class of smooth perturbations of a locally isoperimetric partition $\E_0$.

\begin{definition} \label{def:epsmuC-perturbation}
    Let $\E_0$ be a locally isoperimetric partition in $\R^2$. Given constants $\e_0>0$ and $R>0$, a $C^1$-partition $\E$ is said to be an \emph{$\e_0$-perturbation of $\E_0$ in $B_R$} if $|\E(i)|=|\E_0(i)|$ for all $i$, and there exists a $C^{1}$-diffeomorphism $\Psi$ between $\pt\E_0$ and $\pt \E$ such that
    \begin{equation*}
        \supp(\Psi-\id) \ssubset B_R
        \qquad\text{and}\qquad
        \| \Psi - \id \|_{C^1(\pt \E_0)} \leq \e_0.
    \end{equation*}
\end{definition}

Taking $\E_0=\lensc$ to be lens partition, its stability among these smooth perturbations is given by the following result.

\begin{theorem}[Stability among $\e_0$-perturbations] \label{thm:stability-smooth}
Let $m>0$ and $R>0$ be fixed. There exist $\e_0>0$ and $\kappa_0>0$ (depending on $m$ and $R$) such that if $\E$ is an $\e_0$-perturbation of $\lensc$ in $B_R$, then
    \[
        \defi{\E}{\lensc} \geq \kappa_0\Asymm{\E}{\lensc}^2.
    \]
\end{theorem}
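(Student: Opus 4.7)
The plan is to carry out a Fuglede-type second-variation analysis around $\lensc$, reducing the stability inequality to a spectral coercivity statement for the Jacobi quadratic form on a suitably constrained tangent space.

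\textbf{Step 1 (parametrization).} Using the $C^1$-diffeomorphism $\Psi\colon \partial\lensc \to \partial\E$ from Definition~\ref{def:epsmuC-perturbation}, decompose $\Psi - \id = \tau + h\nu$ into tangential and normal components on each of the three smooth arcs of $\partial\lensc$: the upper and lower circular arcs $\Gamma_\pm$ (each of length $2\pi r_m/3$ and mean curvature $H = 1/r_m$ with respect to the outer normal of $\lensc(1)$) and the flat interface $\Gamma_0$ between the two infinite regions. The tangential component parametrizes the same image curves and is absorbed by reparametrization; the three normal displacements $h_+, h_-, h_0$ together with the triple-junction shifts $\xi_i \defeq \Psi(p_i)-p_i$ completely describe $\partial\E$. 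The $120^\circ$ condition on the three tangents at $p_i$ translates into the scalar junction compatibility $h_0(p_i) = h_+(p_i) - h_-(p_i)$ for $i = 1, 2$ (up to signs). The volume constraint $|\E(1)| = m$ and orthogonality of $h$ to rigid motions, enforced by choosing the optimal isometry in $\Asymm{\E}{\lensc}$, further cut down the admissible class.

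\textbf{Step 2 (second-order expansion).} The standard normal-graph formulas
\[
L(h) - L(0) = \int H h \, \dd s + \tfrac12\int (h')^2 \, \dd s + O(\|h\|_{C^1}^3), \qquad A(h) - A(0) = \int h \, \dd s + \tfrac12\int H h^2 \, \dd s + O(\|h\|_{C^1}^3),
\]
applied on each arc, combined with the $120^\circ$ conditions (which make the triple-junction boundary contributions in the first variation cancel), give the deficit
\[
\defi{\E}{\lensc} = \tfrac{1}{r_m}\!\int (h_+ + h_-) \, \dd s + \tfrac12\!\int \bigl[(h_+')^2 + (h_-')^2 + (h_0')^2\bigr] \, \dd s + O(\e_0 \|h\|_{H^1}^2),
\]
and rewrite the constraint $|\E(1)| = m$ as
\[
\int (h_+ + h_-) \, \dd s = -\tfrac{1}{2 r_m}\!\int (h_+^2 + h_-^2) \, \dd s + O(\e_0 \|h\|_{H^1}^2).
\]
Substituting one into the other yields $\defi{\E}{\lensc} = Q(h) + O(\e_0 \|h\|_{H^1}^2)$, where
\[
Q(h) \defeq \tfrac12\!\int [(h_+')^2 + (h_-')^2 + (h_0')^2] \, \dd s - \tfrac{1}{2 r_m^2}\!\int (h_+^2 + h_-^2) \, \dd s.
\]

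\textbf{Step 3 (strict coercivity of $Q$: main obstacle).} We must show that there exists $c > 0$ such that $Q(h) \geq c \|h\|_{L^2}^2$ for all admissible $h$. The underlying Jacobi operator is $-\partial_s^2 - r_m^{-2}$ on each arc $\Gamma_\pm$ and $-\partial_s^2$ on $\Gamma_0$; its kernel on a free arc is two-dimensional and consists of the restrictions of infinitesimal rigid translations of $\R^2$. On $\Gamma_0$, the compact support in $B_R$ yields a Poincaré inequality and positivity for free. On $\Gamma_\pm$, whose length $2\pi r_m/3$ gives the heuristic Dirichlet eigenvalue $(3/(2r_m))^2 - 1/r_m^2 = 5/(4r_m^2) > 0$, the required positivity is obtained by Fourier-expanding $h_\pm$ in a basis adapted to the arc length and using the junction compatibility $h_0(p_i) = h_+(p_i) - h_-(p_i)$ to couple the arc modes to the boundary values of $h_0$, while the volume and rigid-motion orthogonality constraints eliminate the translational zero modes. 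A closedness/compactness argument then promotes the strict positivity along each admissible direction to a uniform Rayleigh lower bound.

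\textbf{Step 4 (conclusion).} The thin-tube estimate
\[
\Asymm{\E}{\lensc} \leq C \sum_{a \in \{+,-,0\}} \int_{\Gamma_a} |h_a| \, \dd s \leq C_R \|h\|_{L^2}
\]
(the second inequality by Cauchy-Schwarz and the uniform support bound $R$) combines with Step 3, once $\e_0$ is small enough to absorb the $O(\e_0 \|h\|_{H^1}^2)$ remainder into the leading quadratic form, to yield $\defi{\E}{\lensc} \geq \tfrac{c}{2}\|h\|_{L^2}^2 \geq \kappa_0 \Asymm{\E}{\lensc}^2$. The principal difficulty is Step 3: one must exploit the specific geometry of $\lensc$ (arcs of length $2\pi r_m/3$ meeting a flat line at $120^\circ$ junctions) to rule out every nontrivial Jacobi mode beyond the obvious rigid-motion ones.
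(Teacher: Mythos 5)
Your proposal is a normal-graph Fuglede argument whose entire weight rests on Step 3, and Step 3 is asserted rather than proved: you yourself call it ``the main obstacle'' and then offer only a heuristic (a Dirichlet eigenvalue count on arcs of length $2\pi r_m/3$, a vague Fourier/compactness argument, and a junction relation $h_0(p_i)=h_+(p_i)-h_-(p_i)$ ``up to signs'' that is not derived). This is precisely the nontrivial spectral content of the theorem, and it cannot be waved through: the dangerous directions are not interior Dirichlet modes but the ones in which the two triple junctions slide along the line (the mode the paper calls $\sigma$), for which the Jacobi form has no obvious sign and whose positivity comes only from a second-order effect of the area constraint. Moreover, your Step 2 expansion is not justified in the presence of moving junctions: a perturbation in which $\Psi(p_i)$ moves tangentially cannot be written as a small normal graph over the \emph{fixed} arcs of $\partial\lensc$, so the displayed formulas for $L(h)-L(0)$ and $A(h)-A(0)$ miss the boundary (junction) contributions at second order, and the claim that the $120^\circ$ condition makes all junction terms cancel is only true at first order. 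Until the parametrization near the junctions is set up carefully (with the junction displacements as extra degrees of freedom entering both the quadratic form and the constraints) and the resulting constrained form is shown to be coercive modulo the horizontal translation only, the proof is incomplete at its core.

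For comparison, the paper avoids the spectral analysis altogether. It normalizes $r_m=1$, translates so the two junction images are symmetric, and uses Lemma~\ref{lem:riccardo-lemma} to write the three interfaces as graphs $g_0,g_1,g_2$ over the $x$-axis on the intervals determined by the (possibly displaced) junction abscissae $\pm\frac{\sqrt3}{2}(1+\sigma)$, comparing $g_1,g_2$ with the \emph{rescaled} lens profiles $\tilde u_i$. Uniform convexity of $t\mapsto\sqrt{1+t^2}$ gives the quadratic terms $\int (g_i'-\tilde u_i')^2$, while the first-order term is computed exactly: since $\tilde u_i'/\sqrt{1+(\tilde u_i')^2}=\mp x/(1+\sigma)$, an integration by parts converts it into the area difference between $\E(1)$ and the rescaled lens, which the constraint $|\E(1)|=m$ turns into the explicitly positive term $\frac{m\sigma^2}{1+\sigma}$. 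Coercivity is then just a sum of squares plus a one-dimensional Poincar\'e inequality, and the asymmetry is bounded by the same $L^2$ quantities plus $|\sigma|$. In other words, the specific geometry of the lens lets one gauge away the negative $-\kappa^2h^2$ term of the Jacobi operator by working with graphs over the symmetry line and a one-parameter dilation; this is what makes the stability elementary, whereas your route would require genuinely proving the constrained positivity of the Jacobi form with triple-junction coupling, which is the hard (and here unnecessary) part.
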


In order to prove this theorem we will use a geometric fact which states that the image of a graph by a diffeomorphism close to the identity is again a graph. We state this fact as a separate lemma since it could be of independent interest.

\begin{lemma}\label{lem:riccardo-lemma}
Let $f\in C^{1,1}([a,b])$, with $-\infty<a<b<+\infty$. Then, there exist $\lambda_0>0$ and $C>0$, depending only on the $C^{1,1}$-norm of $f$ and on $|b-a|$, with the following property: if $\Psi$ is a $C^1$-diffeomorphism between $\mathrm{graph}(f)$ and its image with $\|\Psi - \mathrm{Id}\|_{C^1(\mathrm{graph}(f))}<\lambda_0$, then there exist $-\infty<c<d<+\infty$ and $\widetilde{f}\in C^1([c,d])$ such that
\begin{equation} \label{proof:graph-1}
\Psi(\mathrm{graph}(f)) = \mathrm{graph}(\widetilde{f})
\qquad\text{and}\qquad \| \widetilde{f} - f\circ \eta \|_{C^1([c,d])} \leq C \|\Psi-\mathrm{Id}\|_{C^1(\mathrm{graph}(f))},
\end{equation}
where $\eta:[c,d]\to [a,b]$ is defined as
\[
\eta(x)\defeq a + \frac{b-a}{d-c}(x-c).
\]
\end{lemma}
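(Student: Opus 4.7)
The plan is to realize $\Psi(\mathrm{graph}(f))$ as a graph over its projection on the $x$-axis, and then to obtain the $C^1$-comparison with $f\circ\eta$ by writing everything through the diffeomorphism that relates the two parametrizations. First I would parametrize $\mathrm{graph}(f)$ by $\gamma(t)=(t,f(t))$ for $t\in[a,b]$ and set $(X(t),Y(t))\defeq\Psi(\gamma(t))$. Using that $\Psi-\mathrm{Id}$ has $C^1$-norm on $\mathrm{graph}(f)$ bounded by $\lambda_0$ and that $|\gamma'(t)|=\sqrt{1+f'(t)^2}$ is controlled by $\|f\|_{C^{1,1}}$, the chain rule yields
\[
|X(t)-t|,\ |Y(t)-f(t)|\leq\lambda_0,\qquad |X'(t)-1|,\ |Y'(t)-f'(t)|\leq C\lambda_0,
\]
with $C$ depending only on $\|f\|_{C^{1,1}}$. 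In particular, for $\lambda_0$ sufficiently small (depending only on $\|f'\|_\infty$) we have $X'(t)\geq\tfrac12$, so $X\colon[a,b]\to[c,d]\defeq[X(a),X(b)]$ is a $C^1$-diffeomorphism.

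Next I would set $t(x)\defeq X^{-1}(x)$ and $\widetilde f(x)\defeq Y(t(x))$ on $[c,d]$, so that by construction $\Psi(\mathrm{graph}(f))=\mathrm{graph}(\widetilde f)$. The intermediate step is to prove
\[
\|t-\eta\|_{C^1([c,d])}\leq C\lambda_0.
\]
For the derivatives this follows from $t'(x)=1/X'(t(x))$ and $\eta'(x)=(b-a)/(d-c)$ together with $|d-c-(b-a)|\leq 2\lambda_0$; for the values, from $t(c)=\eta(c)=a$ by integration on $[c,d]$.

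Finally, using the decompositions
\begin{align*}
\widetilde f(x)-f(\eta(x)) &= \bigl[Y(t(x))-f(t(x))\bigr]+\bigl[f(t(x))-f(\eta(x))\bigr],\\
\widetilde f'(x)-(f\circ\eta)'(x) &= \bigl[Y'(t(x))-f'(t(x))\bigr]t'(x)+f'(t(x))\bigl[t'(x)-\eta'(x)\bigr]+\bigl[f'(t(x))-f'(\eta(x))\bigr]\eta'(x),
\end{align*}
the bounds on $Y-f$, $Y'-f'$, and $t-\eta$ together with $\|f'\|_\infty\leq\|f\|_{C^{1,1}}$ control every term by $C\lambda_0$, except for $f'(t(x))-f'(\eta(x))$, which is estimated by $[f']_{\mathrm{Lip}}\,|t(x)-\eta(x)|\leq C\|f\|_{C^{1,1}}\lambda_0$. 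This is exactly the point where the $C^{1,1}$-hypothesis enters in an essential way (a mere $C^1$-regularity for $f$ would fail to close the argument at this step). Combining all contributions yields the desired bound \eqref{proof:graph-1}.

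The main obstacle is less conceptual than bookkeeping: one has to verify that every constant depends solely on $\|f\|_{C^{1,1}}$ and $|b-a|$, and that the smallness threshold $\lambda_0$ needed to invert $X$ depends only on $\|f'\|_\infty$. Once these uniformities are checked, the argument is essentially the implicit function theorem applied quantitatively along the graph.
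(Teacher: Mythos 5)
Your proof is correct and follows the same basic strategy as the paper's: both arguments invert the first-coordinate map along the graph (your $X$ is exactly the paper's $G(x)=\Pi_1(\Psi(x,f(x)))$), define $\widetilde f$ as the second component composed with this inverse, and close the $C^1$-estimate by a triangle-inequality decomposition in which the Lipschitz bound on $f'$ (i.e.\ the $C^{1,1}$-hypothesis) absorbs the term $f'(t(x))-f'(\eta(x))$, exactly as in the paper. The one genuine difference is that the paper first extends $\Psi$ to a global diffeomorphism of $\R^2$ via Whitney's Extension Theorem, so that it can manipulate the partial derivatives $\partial_x\Psi_1,\partial_y\Psi_1$ on all of $\R^2$, whereas you work intrinsically with the parametrization $t\mapsto(t,f(t))$ and the chain rule along the curve; this makes your version slightly more elementary, at the mild cost of having to be explicit that the $C^1$-norm on $\mathrm{graph}(f)$ controls the tangential derivative, whence the factor $|\gamma'(t)|\leq\sqrt{1+\|f'\|_\infty^2}$ you already account for. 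You also compare $t=X^{-1}$ directly with $\eta$, while the paper estimates $\|G^{-1}-\mathrm{Id}\|_{C^1}$ and $\|\eta-\mathrm{Id}\|_{C^1}$ separately; this is only a reorganization. Two small points to tidy up in writing: the final bounds should be stated in terms of $\e\defeq\|\Psi-\mathrm{Id}\|_{C^1(\mathrm{graph}(f))}$ rather than the threshold $\lambda_0$ (your estimates do give linear dependence on $\e$, since every occurrence of $\lambda_0$ in the chain really is a bound by $\e$), and the smallness threshold for inverting $X$, as well as the lower bound $d-c\geq\frac12(b-a)$ used when comparing $\eta'$ with $t'$, make $\lambda_0$ depend on $|b-a|$ in addition to $\|f\|_{C^{1}}$ --- which is allowed by the statement and consistent with the paper.
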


\begin{proof}
Using Whitney's Extension Theorem (see \cite[Theorem~2.3]{CicLeoMag16}), we can extend $\Psi$ to a diffeomorphism defined in the entire $\R^2$ with 
\begin{equation} \label{eq:whitney}
\|\Psi-\mathrm{Id}\|_{C^1(\R^2;\R^2)} \leq C \|\Psi-\mathrm{Id}\|_{C^1(\mathrm{graph}(f))} \leq C\lambda_0
\end{equation}
for a constant $C>0$ depending only on $|b-a|$ and on the $C^0$-norm of $f$.
Consider the function $G:[a,b]\to\R$ defined as
\[
G(x)\defeq \Pi_1\left( \Psi(x,f(x)) \right),
\]
where $\Pi_1:\R^2\to\R$ is the projection on the first coordinate. Note that $G\in C^1([a,b])$. By writing $\Psi(x,y) = (\Psi_1(x,y), \Psi_2(x,y))$, so that $G(x)=\Psi_1(x,f(x))$, we easily obtain the following estimates:
\begin{align*}
|G(x) - x|
\leq |(\Psi-\mathrm{Id})(x,f(x))| 
\leq \|\Psi-\mathrm{Id}\|_{C^0(\R^2;\R^2)},
\end{align*}
and
\begin{equation*}
\begin{split}
|G'(x)-1|
& = |\partial_x\Psi_1(x,f(x)) + \partial_y\Psi_1(x,f(x)) f'(x) -1 | \\
& \leq |\partial_x\Psi_1(x,f(x))-1| + |\partial_y\Psi_1(x,f(x))| |f'(x)| \\
& \leq \bigl(1+\|f\|_{C^1([a,b])}\bigr) \|\Psi - \mathrm{Id}\|_{C^1(\R^2;\R^2)}.
\end{split}
\end{equation*}
It follows in particular that 
\begin{equation} \label{eq:est_G-G'}
    \| G - \mathrm{Id} \|_{C^1([a,b])} \leq \bigl(1+\|f\|_{C^1([a,b])}\bigr) \|\Psi - \mathrm{Id}\|_{C^1(\R^2;\R^2)},
\end{equation}
so that by choosing
\begin{equation*}
    \lambda_0 \defeq \frac{1}{2C\bigl(1+\|f\|_{C^1([a,b])}\bigr)}
\end{equation*}
and recalling \eqref{eq:whitney} can guarantee that $G$ is invertible on $[a,b]$ with a $C^1$-inverse $G^{-1}:[c,d]\to[a,b]$, where $c\defeq G(a)$ and $d\defeq G(b)$.

Define $\widetilde{f}:[c,d]\to\R$ as
\[
\widetilde{f}(x) \defeq \Psi_2\left( G^{-1}(x), f(G^{-1}(x)) \right),
\]
so that $\widetilde{f}$ is of class $C^1$ and, by definition, $\Psi(\mathrm{graph}(f)) = \mathrm{graph}(\widetilde{f})$, proving the first condition in \eqref{proof:graph-1}. It remains to estimate $\|\widetilde{f} - f\circ\eta \|_{C^1([c,d])}$. In order to do so, we first observe that, by \eqref{eq:est_G-G'}, we also have
\begin{equation}\label{eq:est_G-1}
\|G^{-1} - \mathrm{Id} \|_{C^1([c,d])}
\leq 2 \|G - \mathrm{Id} \|_{C^1([a,b])}
\leq 2 \bigl(1+\|f\|_{C^1([a,b])}\bigr)\|\Psi-\mathrm{Id}\|_{C^1(\R^2;\R^2)}.
\end{equation}
Moreover, for $x\in[c,d]$ we have
\begin{align*}
|\eta(x) - x|
& = \Big| a + \frac{b-a}{G(b)-G(a)}(x-G(a)) -x \Big| \\
& = \Big| a - G(a) + \Bigl(\frac{b-a}{G(b)-G(a)}-1\Bigr)(x-G(a)) \Big| \\
& \leq |a-G(a)|  + \bigg| \frac{b-a}{G(b)-G(a)}-1 \bigg| |G(b)-G(a)| \\
& \leq \|G-\mathrm{Id}\|_{C^0([a,b])}  + \|G^{-1}-\mathrm{Id}\|_{C^1([c,d])} \|G\|_{C^1([a,b])}|b-a|\\
&\leq c_0 \|\Psi-\mathrm{Id}\|_{C^1(\R^2;\R^2)},
\end{align*}
where last step follows from \eqref{eq:est_G-G'} and \eqref{eq:est_G-1}, for a constant $c_0$ depending on $|b-a|$ and on $\|f\|_{C^1([a,b])}$. Similarly,
\begin{align*}
|\eta'(x) - 1|
= \bigg| \frac{b-a}{G(b)-G(a)} - 1 \bigg|
\leq \|G^{-1}-\mathrm{Id}\|_{C^1([c,d])}
\leq c_0 \|\Psi-\mathrm{Id}\|_{C^1(\R^2;\R^2)}.
\end{align*}
Thus,
\begin{equation}\label{eq:est_eta}
\|\eta - \mathrm{Id}\|_{C^1([a,b])} \leq c_0 \|\Psi-\mathrm{Id}\|_{C^1(\R^2;\R^2)}.
\end{equation}
We are now in position to obtain the desired estimate. For $x\in[c,d]$ we write
\begin{align*}
\widetilde{f}(x) - f\circ\eta(x)
    &= \widetilde{f}(x) - f(G^{-1}(x)) + f(G^{-1}(x)) -  f(\eta(x)) \\
    &= \Pi_2\bigl( (\Psi-\mathrm{Id}) \bigl(G^{-1}(x), f(G^{-1}(x)) \bigl) \bigr) + f(G^{-1}(x)) -  f(\eta(x)),
\end{align*}
from which we get that
\begin{equation*}
\| \widetilde{f} - f\circ\eta \|_{C^0([c,d])}
\leq \|\Psi-\mathrm{Id}\|_{C^0(\R^2;\R^2)}
+ \|f'\|_{C^0([a,b])} \bigl( \|G^{-1} - \mathrm{Id}\|_{C^0([c,d])} + \|\eta - \mathrm{Id}\|_{C^0([c,d])} \bigr),
\end{equation*}
and
\begin{align*}
\| \widetilde{f}' - (f\circ\eta)' \|_{C^0([c,d])}
& \leq \|G^{-1}\|_{C^1([c,d])} \bigl(1 + \|f'\|_{C^0([a,b])}\bigr) \|\Psi-\mathrm{Id}\|_{C^1(\R^2;\R^2)} \\
& \qquad + \| (f'\circ G^{-1}) ((G^{-1})'-1)\|_{C^0([c,d])} + \| (f'\circ\eta)(\eta'-1)\|_{C^0([c,d])} \\
& \qquad + \| f'\circ G^{-1} - f'\circ\eta \|_{C^0([c,d])} \\
& \leq \|G^{-1}\|_{C^1([c,d])} \bigl(1 + \|f'\|_{C^0([a,b])}\bigr) \|\Psi-\mathrm{Id}\|_{C^1(\R^2;\R^2)} \\
& \qquad + \|f\|_{C^{1,1}([a,b])} \bigl( \|G^{-1} - \mathrm{Id} \|_{C^1([c,d])} + \|\eta - \mathrm{Id}\|_{C^1([c,d])} \bigr).
\end{align*}
Thus, combining \eqref{eq:est_G-1} and \eqref{eq:est_eta}, and recalling \eqref{eq:whitney}, we get the second estimate in \eqref{proof:graph-1}.    
\end{proof}

\medskip

\begin{proof}[Proof of Theorem~\ref{thm:stability-smooth}]
Note that by scaling, we can assume that
\begin{equation}\label{eq:r1}
    r_m = 1,
\end{equation}
which, by \eqref{eq:lens-radius}, corresponds to taking $m=\frac{2\pi}{3}-\frac{\sqrt{3}}{2}$. The singular set $\Sigma(\lensc)$ is given by the two points $p_1=(-\sqrt{3}/2,0)$ and $p_2=(\sqrt{3}/2,0)$.
We can also assume without loss of generality that $R\geq R_0$ where $\lens\ssubset B_{R_0}$, since any perturbation in a smaller ball is also a perturbation in a larger ball. We write $\pt \E_0=\gamma_0 \cup \gamma_1 \cup \gamma_2$ with
    \[
        \gamma_0 = \Big\{ (x,0) \colon |x| \geq \sqrt{3}/2\Big\}, \qquad \gamma_i = \Big\{(x,u_i(x)) \colon |x|\leq\sqrt{3}/2 \Big\}, \quad i=1,2, 
    \]
where $u_1(x)=\sqrt{1-x^2}-1/2$ and $u_2(x)=-u_1(x)$.

\smallskip

\noindent \emph{Step 1: Graph representation.} Let $\E$ be an $\e_0$-perturbation of $\lensc$ in $B_R$, with $\e_0\in(0,1)$ to be chosen later, and let $(x_i,y_i)=\Psi(p_i)$, where $\Psi$ is a diffeomorphism as in Definition~\ref{def:epsmuC-perturbation}. We apply a small horizontal translation to $\E$ by $\tau=(-\frac{1}{2}(x_1+x_2),0)$ (notice that $|\tau|\leq\e_0$) and define $\F=\E+\tau$ so that
    \begin{gather*}
         |\F(1)|=|\E(1)|=m, \quad \asymm{\F(i)}{\lensc(i)}\ssubset B_{R+1},\\
         \pt\F = \bigcup_{i=0}^2 \big(\Psi(\gamma_i)+\tau\big), \\ \Sigma(\F)=\{q_1,q_2\} \text{ with }  q_i=\Psi(p_i)+\tau.
    \end{gather*}
Note that with this translation $q_1\cdot e_1 = -q_2\cdot e_1$ (see Figure~\ref{fig:lens-perturbation}). We can then write
    \[
        q_1 = \left(-\frac{\sqrt{3}}{2}(1+\sigma),y_1\right) \qquad \text{ and }\qquad q_2 = \left(\frac{\sqrt{3}}{2}(1+\sigma),y_2\right)
    \]
for some $\sigma\in[-2\e_0,2\e_0]$. Notice that $\F$ is a $(2\e_0)$-perturbation of $\lensc$ in $B_{R+1}$.

We write
\[
\partial\F = \tilde{\gamma}_0 \cup \tilde{\gamma}_1 \cup \tilde{\gamma}_2,
\]
where the former is the boundary between the two unbounded regions, while the other two are the boundaries between the finite region and the upper and the lower unbounded regions, respectively. By Lemma~\ref{lem:riccardo-lemma} we can find $\e_0$ sufficiently small and $C_0>0$, depending on $R$, such that for all $\F$ as before there exist
    \begin{gather*}
        g_0 \in C^1_c\Big((-R-1,R+1)\setminus\big(-\tfrac{\sqrt{3}}{2}(1+\sigma),\tfrac{\sqrt{3}}{2}(1+\sigma)\big)\Big), \nonumber \\
        g_1,g_2 \in C^1\Big(\big[-\tfrac{\sqrt{3}}{2}(1+\sigma),\tfrac{\sqrt{3}}{2}(1+\sigma)\big]\Big), 
    \end{gather*}
with $g_1\geq g_2$,
        \begin{gather}
        g_0\big(\pm(1+\sigma)\tfrac{\sqrt{3}}{2}\big)=g_1\big(\pm(1+\sigma)\tfrac{\sqrt{3}}{2}\big)=g_2\big(\pm(1+\sigma)\tfrac{\sqrt{3}}{2}\big), \label{eq:g-bound-cond}\\
        \int_{-(1+\sigma)\frac{\sqrt{3}}{2}}^{(1+\sigma)\frac{\sqrt{3}}{2}} (g_1-g_2) \dd x = m = \int_{-\frac{\sqrt{3}}{2}}^{\frac{\sqrt{3}}{2}} (u_1-u_2)\dd x \label{eq:g-vol-const},
    \end{gather}
such that
        \begin{equation*}
        \tilde{\gamma}_0 = \Big\{ (x,g_0(x)) \colon |x|\geq(1+\sigma)\tfrac{\sqrt{3}}{2} \Big\},
        \qquad
        \tilde{\gamma}_i = \Big\{ (x,g_i(x)) \colon |x|\leq(1+\sigma)\tfrac{\sqrt{3}}{2} \Big\} \quad\text{for } i=1,2,
        \end{equation*}
and
\begin{equation} \label{eq:g-norm}
\|g_0\|_{C^1\big(\big[-\tfrac{\sqrt{3}}{2}(1+\sigma),\tfrac{\sqrt{3}}{2}(1+\sigma)\big]^c\big)} + \sum_{i=1}^2 \| g_i - \tilde{u}_i \|_{C^1\big(\big[-\tfrac{\sqrt{3}}{2}(1+\sigma),\tfrac{\sqrt{3}}{2}(1+\sigma)\big]\big)} \leq C_0\e_0,
\end{equation}
where we define
    \[
        \tilde{u}_i(x) \defeq (1+\sigma) u_i\left(\frac{x}{1+\sigma}\right), \quad i=1,2.
    \]

\begin{figure}[ht]
\begin{tikzpicture}[scale=2.5,line cap=round,line join=round]
	\draw [thick,name path=A] plot [smooth] coordinates {(-{sqrt(3)/2-0.1},0.1) (-0.55,0.43) (-0.18,0.4) (0.15,0.55) (0.3,0.4) (0.6,0.4) ({sqrt(3)/2+0.1},-0.1)};
    \draw [thick,name path=B] plot [smooth] coordinates {(-{sqrt(3)/2-0.1},0.1) (-0.55,-0.2) (-0.3,-0.5) (0.15,-0.43) (0.4,-0.5) (0.6,-0.25) ({sqrt(3)/2+0.1},-0.1)};
    \tikzfillbetween[of=A and B]{blue!15!white};
	\draw [thick,dashed] (-2,0) -- (-{sqrt(3)/2},0);
	\draw [thick,dashed] ({sqrt(3)/2},0) -- (2,0);
	\draw [thick,dashed] (-{sqrt(3)/2},0) arc (150:30:1);
	\draw [thick,dashed] ({sqrt(3)/2},0) arc (330:210:1); 
	\draw [fill=black] (-{sqrt(3)/2},0) circle (0.75pt);
	\draw [fill=black] ({sqrt(3)/2},0) circle (0.75pt);
    \draw [fill=black] (-{sqrt(3)/2}-0.1,0.1) circle (0.75pt);
    \draw [fill=black] (0.9660254,-0.1) circle (0.75pt);
	\node at (-0.92,-0.1) {\footnotesize{$p_1$}};
	\node at (0.75,0) {\footnotesize{$p_2$}};
    \node at (-0.98,0.2) {\footnotesize{$q_1$}};
	\node at (0.98,-0.2) {\footnotesize{$q_2$}};
    \draw [thick] plot [smooth] coordinates {(-2,0) (-1.96,0) (-1.7,-0.08)  (-1.2,0.05)  (-{sqrt(3)/2-0.1},0.1)};
     \draw [thick] plot [smooth] coordinates {({sqrt(3)/2+0.1},-0.1)  (1.15,0.09)  (1.4,-0.06) (1.6,0.04) (1.95,0) (2,0)};
    \draw (-1.2,0.09) -- (-1.5,0.4);
    \draw (1.16,0.13) -- (1.5,0.4);
    \draw (-0.6,0.45) -- (-0.85,0.5);
    \draw (-0.65,-0.17) -- (-0.85,-0.4);
    \node at (-1.63,0.4) {\footnotesize{$\tilde{\gamma}_0$}};
    \node at (1.63,0.4) {\footnotesize{$\tilde{\gamma}_0$}};
    \node at (-1,0.5) {\footnotesize{$\tilde{\gamma}_1$}};
    \node at (-1,-0.4) {\footnotesize{$\tilde{\gamma}_2$}};
    \node at (0,0) {$\F(1)$};
    \node at (0.9,0.5) {$\F(2)$};
    \node at (0.9,-0.5) {$\F(3)$};
     
\end{tikzpicture}
\caption{A translated smooth perturbation of the standard lens partition $\lensc$.}\label{fig:lens-perturbation}
\end{figure}

\smallskip

\noindent\emph{Step 2: Perimeter deficit.} We now estimate the perimeter deficit between $\E$ and $\E_0$, by exploiting the graph representation obtained in the previous step. For simplicity of notation we call $\ell = \frac{\sqrt{3}}{2}$ and denote by $I_\sigma$ the interval $[-(1+\sigma)\ell,(1+\sigma)\ell]$ so that $I_0=[-\ell,\ell]$. Then we have
    \begin{align*}
    \defi{\E}{\lensc}
        & = \defi{\F}{\lensc} = \per(\F;B_{R+1})-\per(\lensc;B_{R+1}) \\
        & = \int_{I_\sigma^c} \left(\sqrt{1+(g_0^\pr)^2}-1\right) \dd x + \sum_{i=1}^2 \int_{I_\sigma} \sqrt{1+(g_i^\pr)^2} \dd x - \sum_{i=1}^2 \int_{I_0} \sqrt{1+(u_i^\pr)^2} \dd x - 2\sigma\ell \\
        & = \int_{I_\sigma^c} \left(\sqrt{1+(g_0^\pr)^2}-1\right) \dd x
            + \sum_{i=1}^2 \int_{I_\sigma} \left(\sqrt{1+(g_i^\pr)^2}-\sqrt{1+(\tilde{u}_i^\pr)^2}\right) \dd x \\
        & \qquad\qquad\qquad\qquad + \sum_{i=1}^2 \int_{I_\sigma} \sqrt{1+\left(u_i^\pr\left(\frac{y}{1+\sigma}\right)\right)^{2}}\dd y - \sum_{i=1}^2 \int_{I_0} \sqrt{1+(u_i^\pr)^2} \dd x - 2\sigma\ell \\
        & = \int_{I_\sigma^c} \left(\sqrt{1+(g_0^\pr)^2}-1\right) \dd x + \sum_{i=1}^2 \int_{I_\sigma} \left(\sqrt{1+(g_i^\pr)^2}-\sqrt{1+(\tilde{u}_i^\pr)^2}\right)\dd x \\
        & \qquad\qquad\qquad\qquad + \sigma \sum_{i=1}^2 \int_{I_0} \sqrt{1+(u_i^\pr)^2}\dd x - 2\sigma \ell \\
        &\geq \frac{1}{2^{5/2}} \int_{I_\sigma^c} (g_0^\pr)^2 \dd x + \sum_{i=1}^2 \int_{I_\sigma} \frac{\tilde{u}_i^\pr}{\sqrt{1+(\tilde{u}_i^\pr)^2}}(g_i^\pr - \tilde{u}_i^\pr) \dd x \\
        &\qquad\qquad\qquad + c_1 \sum_{i=1}^2 \int_{I_\sigma} (g_i^\pr - \tilde{u}_i^\pr)^2 \dd x + \sigma \sum_{i=1}^2 \int_{I_0} \sqrt{1+(u_i^\pr)^2}\dd x - 2\sigma \ell,
    \end{align*}
where $c_1>0$ is a numeric constant and, in the last inequality, we used the basic estimates 
    \begin{gather*}
        \sqrt{1+t^2}-1 \geq \frac{1}{2^{5/2}} t^2 \quad \text{ for all } |t|\leq 1, \\
        \sqrt{1+t^2}-\sqrt{1+s^2}\geq \frac{s}{\sqrt{1+s^2}}(t-s)+\frac{1}{2\cdot5^{3/2}}(t-s)^2 \quad \text{ for all } |s|,|t|\leq 2,
    \end{gather*}
combined with the fact that $\|g_0^\pr\|_{C^0(I_\sigma^c)} \leq 1$, $\|\tilde{u}_i^\pr\|_{C^0(I_\sigma)}\leq\sqrt{3}$ and $\| g_i'\|_{C^0(I_\sigma)}\leq 2$ for $i=1,2$ by \eqref{eq:g-norm} (up to taking a smaller $\e_0$, if needed).

Observe now that from the explicit form of the lens partition and recalling that $m=\frac{2\pi}{3}-\frac{\sqrt{3}}{2}$ in view of the normalization assumption \eqref{eq:r1}, we easily find
    \[
        \sigma \sum_{i=1}^2 \int_{I_0} \sqrt{1+(u_i^\pr)^2}\dd x - 2\sigma \ell = 2m\sigma.
    \]
Therefore, using this identity, the explicit form of $\tilde{u}_i^\pr$, integrating by parts, and using the boundary conditions \eqref{eq:g-bound-cond} for $g_1$ and $g_2$, we obtain
    \begin{align*}
        \defi{\E}{\lensc} 
        &\geq \frac{1}{2^{5/2}} \int_{I_\sigma^c} (g_0^\pr)^2\dd x + c_1 \sum_{i=1}^2 \int_{I_\sigma} (g_i^\pr - \tilde{u}_i^\pr)^2 \dd x \\
        &\qquad\qquad\qquad -\int_{I_\sigma} \frac{x}{1+\sigma} (g_1^\pr - \tilde{u}_1^\pr) \dd x + \int_{I_\sigma} \frac{x}{1+\sigma} (g_2^\pr - \tilde{u}_2^\pr) \dd x + 2m\sigma \\
        &\geq \frac{1}{2^{5/2}} \int_{I_\sigma^c} (g_0^\pr)^2\dd x + c_1 \sum_{i=1}^2 \int_{I_\sigma} (g_i^\pr - \tilde{u}_i^\pr)^2 \dd x \\
        &\qquad\qquad\qquad +\frac{1}{1+\sigma} \left[ \int_{I_\sigma} (g_1-g_2) \dd x - (1+\sigma)^2 \int_{I_0}(u_1-u_2) \dd x \right] + 2m\sigma \\
        & \xupref{eq:g-vol-const}{=} \frac{1}{2^{5/2}} \int_{I_\sigma^c} (g_0^\pr)^2\dd x + c_1 \sum_{i=1}^2 \int_{I_\sigma} (g_i^\pr - \tilde{u}_i^\pr)^2 \dd x - m\sigma\frac{2+\sigma}{1+\sigma} + 2m\sigma \\
        & = \frac{1}{2^{5/2}} \int_{I_\sigma^c} (g_0^\pr)^2\dd x + c_1 \sum_{i=1}^2 \int_{I_\sigma} (g_i^\pr - \tilde{u}_i^\pr)^2 \dd x + \frac{m\sigma^2}{(1+\sigma)}.
    \end{align*}
Now we extend $\tilde{u}_i$ to zero outside of the interval $I_\sigma$, and define 
    \[
        \tilde{g}_i(x) = 
        \begin{cases}
        g_i(x) &\text{if }x\in I_\sigma,\\
        g_0(x) &\text{if }x\in I_\sigma^c
        \end{cases}
    \]
for $i=1,2$. Then, by Poincar\'{e} inequality, we have for $c_2\defeq\min\{\frac{1}{2^{7/2}},c_1\}$
    \begin{align}
        \defi{\E}{\lensc} &\geq c_2\sum_{i=1}^2 \int_{-R-1}^{R+1} \big[ (\tilde{g}_i-\tilde{u}_i)^\pr\big]^2 \dd x + \frac{m\sigma^2}{1+\sigma} \geq C_R \sum_{i=1}^2 \int_{-R-1}^{R+1} \big(\tilde{g}_i-\tilde{u}_i \big)^2 \dd x + \frac{m\sigma^2}{1+\sigma}  \label{eq:per-deficit}
    \end{align}
for some constant $C_R>0$ depending on $R$.

\smallskip

\noindent \emph{Step 3: Asymmetry estimate.} Let
    \[ 
        \tilde{L}_m \defeq \Bigl\{(x,y)\in \R^2 \colon |x|\leq (1+\sigma)\ell, \ \ |y| \leq (1+\sigma)u_1\Bigl(\frac{x}{1+\sigma}\Bigr) \Bigr\}
    \]
so that $|\tilde{L}_m| = (1+\sigma)^2 |\lens|$. Define the rescaled lens partition $\tilde{\mathcal{L}}_m = (\tilde{L}_m, H^+\setminus\tilde{L}_m,H^-\setminus\tilde{L}_m)$. Then
    \begin{align*}
        \Asymm{\E}{\lensc} &= \Asymm{\F}{\lensc} \leq \frac{1}{2} \sum_{i=1}^3 |\asymm{\F(i)}{\tilde{\mathcal{L}}_m(i)}| + \frac{1}{2} \sum_{i=1}^3 |\asymm{\tilde{\mathcal{L}}_m(i)}{\lensc(i)}| \\
        &\leq \int_{I_\sigma^c} |g_0|\dd x + \sum_{i=1}^2 \int_{I_\sigma} |g_i - \tilde{u}_i|\dd x + \big| |\tilde{L}_m|-|\lens| \big| \\
        &\leq \sum_{i=1}^2 \int_{-R-1}^{R+1} |\tilde{g}_i - \tilde{u}_i| \dd x + |\sigma|(\sigma+2)m.
    \end{align*}
Therefore
    \begin{equation} \label{eq:asym}
        \Asymm{\E}{\lensc}^2 \leq C_R' \left( \sum_{i=1}^2 \int_{-R-1}^{R+1} \big(\tilde{g}_i - \tilde{u}_i\big)^2 \dd x + |\sigma|^2 m^2 \right)
    \end{equation}
for a constant $C_R'>0$ depending only on $R$.

\smallskip

\noindent \emph{Step 4: Conclusion.} Since $|\sigma|\leq2\e_0$, combining the estimates \eqref{eq:per-deficit} and \eqref{eq:asym} we obtain the result of the theorem.
\end{proof}

\begin{proof}[Proof of Theorem~\ref{thm:stability-lens}]
In view of Theorem~\ref{thm:small-asymm}, it is enough to show that the constant $\kappa_R(\lensc)$ defined in \eqref{eq:kappa} is strictly positive, for all $m$ and $R$. Consider the sequence of $C^{1,1}$-partitions $(\F_k)_k$ constructed in Theorem~\ref{thm:improved-conv}, satisfying in particular
\begin{equation} \label{proof:stability}
\kappa_R(\lensc) = \lim_{k\to\infty}\frac{\defi{\F_k}{\E_0}}{\Asymm{\F_k}{\E_0}}.
\end{equation}
For every $\e>0$, each partition $\F_k$ is an $\e$-perturbation of $\lensc$ in $B_{R_0}$ for all $k$ sufficiently large (depending on $\e$), according to Definition~\ref{def:epsmuC-perturbation}, for a suitable radius $R_0>0$ depending only on $m$ and $R$.

In particular we can apply Theorem~\ref{thm:stability-smooth} to $\F_k$ to deduce that the right-hand side of \eqref{proof:stability} is uniformly bounded from below by a positive constant $\kappa_0$, as desired.
\end{proof}


\section{An application: small-mass minimizers for an isoperimetric problem with nonlocal perturbation} \label{sect:nonlocal-lens}

Let $\alpha\in(0,2)$ and $\gamma\geq 0$ be fixed parameters. We consider for $m>0$ the following area-constrained nonlocal isoperimetric problem in $\R^2$:
\begin{equation} \label{nl:min}
\min\Bigl\{ \Fun_\gamma(E) \,:\, E\subset\R^2,\, |E| = m \Bigr\},
\end{equation}
where for every set of finite perimeter $E\subset\R^2$ we define the functional
\begin{equation} \label{nl:fun}
\Fun_\gamma(E)\defeq \per(E) - \Hone\bigl((\partial^*E\cup E^{(1)})\cap H\bigr) + \gamma \int_E\int_E \frac{1}{|x-y|^\alpha}\dd x \dd y.
\end{equation}
Here $H\defeq\{(x,y)\in\R^2:y=0\}$, and we recall that $E^{(\theta)}$ denotes the set of points of Lebesgue density $\theta$ of $E$.
For simplicity of notation we set $E_*\defeq \partial^*E\cup E^{(1)}$ for any set of finite perimeter $E$.

Although this problem can be formulated in any dimension, we restrict here to the case of dimension $d=2$ since our main goal is to show an application of the stability theorem for the lens partition. Nonlocal isoperimetric problems where the perimeter functional is perturbed by a nonlocal repulsive interaction have received great attention in the last decade, particularly in connection with Gamow's liquid drop model for the atomic nucleus (see \cite{ChoMurTop17} for a review), and with the Ohta-Kawasaki model for diblock copolymers. In particular the functional \eqref{nl:fun} is expected to emerge in the scaling limit of a three-phases system for triblock copolymers, in a regime where two majority phases equally occupy nearly all the space forming lamellar structures, and the third minority phase organizes in small droplets on the lamellar flat interfaces, see for instance \cite{AlaBroLuWan21,AlaBroLuWan}.

The connection with the partitioning problem studied in this paper is made clear in the following remark. As a consequence, we obtain that the standard lens $\lens$ (see \eqref{eq:lens}) minimizes the local functional $\Fun_0$, see Remark~\ref{rmk:nonlocal-lens}.

\begin{remark} \label{rmk:nonlocal-1}
For $\gamma=0$, the local functional $\Fun_0$ computed on any bounded set $F\ssubset B_R$, $R>0$, coincides up to a constant (depending on $R$) with the perimeter of the partition $\F\defeq(F,H^+\setminus F, H^-\setminus F)$ associated with $F$ ($H^+$ and $H^-$ denoting the upper and lower half-planes, respectively):
\begin{equation} \label{nl:fun-2}
\Fun_0(F) = \per(\F;B_R) - \Hone(B_R\cap H) \qquad\text{for all }F\ssubset B_R.
\end{equation}
Indeed,
\begin{align*}
\Fun_0(F) + \Hone(B_R\cap H)
& = \per(F) + \Hone(F^{(0)}\cap H\cap B_R) \\
& = \per(F) + \Hone(\partial^*(H^+\setminus F)\cap\partial^*(H^-\setminus F)\cap H\cap B_R)
= \per(\F;B_R).
\end{align*}
We also notice that in view of the identity \eqref{nl:fun-2}, the local functional $\Fun_0$ is lower semicontinuous along any sequence of sets converging in $L^1$ and all contained in a ball of fixed radius, see \cite[Lemma~2.4]{AlaBroVri}.
\end{remark}

\begin{remark} \label{rmk:nonlocal-lens}
For all $m>0$, the standard lens $\lens$ with area $m$ (see \eqref{eq:lens}) minimizes the local functional $\Fun_0$:
\begin{equation} \label{nl:lens-min}
    \mu_0 \defeq \min\bigl\{ \Fun_0(E) \,:\, |E|=1 \bigr\} = \Fun_0(L_1)
\end{equation}
and, by scaling,
\begin{equation} \label{nl:lens-min2}
    \min\bigl\{ \Fun_0(E) \,:\, |E|=m \bigr\} = \Fun_0(\lens) = \mu_0 \sqrt{m}.
\end{equation}
Indeed, for every bounded set $F$ with $|F|=m$, we immediately obtain that $\Fun_0(F)\geq\Fun_0(\lens)$ in view of the relation \eqref{nl:fun-2} and of the minimality of the lens partition $\lensc$. Since we can take a minimizing sequence $(F_k)_k$ for the minimum problem \eqref{nl:lens-min2} made of bounded sets, we have that $\Fun_0(\lens)\leq\Fun_0(F_k)$ for every $k$, so that the minimality of $\lens$ follows.
\end{remark}

In the following theorem we prove existence of minimizers of \eqref{nl:min} for small values of $m$, and that (rescaled) minimizers converge in measure to the standard lens as $m\to0$, as a consequence of the stability property of the lens partition proved in Theorem~\ref{thm:stability-lens}. Notice that by a standard scaling argument, using the homogeneity of the nonlocal kernel, the ``small mass regime'' $m\to0^+$ corresponds to small values of the coefficient $\gamma$ multiplying the nonlocal term; this observation, combined with Remark~\ref{rmk:nonlocal-lens}, provides an heuristic explanation of the following result.

\begin{theorem} \label{thm:nonlocal-lens}
There exists a threshold $m_0>0$, depending only on $\alpha$ and $\gamma$, such that for every $m\in(0,m_0)$ the minimum problem \eqref{nl:min} admits a solution $E_m$. Moreover
\begin{equation} \label{eq:nonlocal-3}
\big| \asymm{ \bigl({\textstyle\frac{1}{\sqrt{m}}}E_m\bigr) }{L_1}\big| \to 0 \qquad\text{as }m\to 0^+,
\end{equation}
where $L_1$ denotes the unit-area lens defined in \eqref{eq:lens}.
\end{theorem}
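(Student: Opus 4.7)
The overall plan is to exploit the scaling properties of $\Fun_\gamma$ to reduce, in the small-mass regime, to a question about the local functional $\Fun_0$, and then to invoke the stability inequality, Theorem~\ref{thm:stability-lens}, to upgrade a small energy deficit into $L^1$-closeness to the standard lens. The fundamental identity is that under $E=\sqrt{m}\,\tilde E$ the functional splits as
\[
\Fun_\gamma(\sqrt{m}\,\tilde E) = \sqrt{m}\,\Fun_0(\tilde E) + \gamma m^{(4-\alpha)/2}\int_{\tilde E}\int_{\tilde E}\frac{\dd x\dd y}{|x-y|^\alpha};
\]
since $\alpha<2$, the nonlocal correction is of lower order than $\sqrt{m}$, and the leading order is uniquely minimized by the unit-area lens $L_1$ thanks to Remark~\ref{rmk:nonlocal-lens}.

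The first step is existence: I would fix $m>0$ small and run a concentration-compactness argument on a minimizing sequence $(F_k)_k$, using the horizontal translation invariance of $\Fun_\gamma$ and the uniform perimeter bound coming from $\Fun_\gamma(F_k)\leq\Fun_\gamma(\lens)$ together with Remark~\ref{rmk:nonlocal-1}. Splitting of mass and escape to infinity are excluded by the strict subadditivity $\mu_0\sqrt{m_1+m_2}<\mu_0(\sqrt{m_1}+\sqrt{m_2})$: for $m$ small this strict local gap dominates any nonlocal gain from splitting, which is of order $m^{(4-\alpha)/2}=o(\sqrt m)$. Using $\lens$ itself as a competitor also produces the upper bound
\[
\Fun_\gamma(E_m)\leq\Fun_\gamma(\lens) = \mu_0\sqrt{m} + \gamma m^{(4-\alpha)/2}N(L_1), \qquad N(L_1)\defeq\int_{L_1}\int_{L_1}\frac{\dd x\dd y}{|x-y|^\alpha}.
\]

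The second step is the small-deficit estimate. Setting $\tilde E_m\defeq E_m/\sqrt{m}$, so that $|\tilde E_m|=1$, and using $N(\tilde E_m)\geq 0$, the scaling identity and the upper bound above combine to give
\[
\Fun_0(\tilde E_m) - \mu_0 \leq \gamma m^{(3-\alpha)/2}N(L_1) \xrightarrow{m\to 0^+} 0.
\]
The main obstacle to applying Theorem~\ref{thm:stability-lens} directly at this point will be to show that, after a horizontal translation, the rescaled sets $\tilde E_m$ are all contained in a common ball $B_{R_0}$ with $R_0$ independent of $m$. I expect to achieve this by noting that minimizers of $\Fun_\gamma$ are quasi-minimizers of the partition perimeter with uniform constants, so that $L^1$-closeness of (a translate of) $\tilde E_m$ to $L_1$ -- which follows from the lower semicontinuity of $\Fun_0$ on bounded sequences (Remark~\ref{rmk:nonlocal-1}) and the uniqueness in \eqref{nl:lens-min} -- bootstraps to uniform Hausdorff closeness through standard density estimates for quasi-minimizers.

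Once uniform containment is secured, Remark~\ref{rmk:nonlocal-1} rewrites the deficit of the associated partition $\tilde{\mathcal E}_m\defeq(\tilde E_m,H^+\setminus\tilde E_m,H^-\setminus\tilde E_m)$ against $\mathcal L_1\defeq(L_1,H^+\setminus L_1,H^-\setminus L_1)$ as
\[
\defi{\tilde{\mathcal E}_m}{\mathcal L_1} = \Fun_0(\tilde E_m) - \mu_0 \to 0,
\]
and $\tilde{\mathcal E}_m\in\comp_{R_0}(\mathcal L_1)$, so Theorem~\ref{thm:stability-lens} yields $\Asymm{\tilde{\mathcal E}_m}{\mathcal L_1}\to 0$. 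Because the isometries contributing to the partition asymmetry of $\mathcal L_1$ reduce in this situation to horizontal translations (any other isometry would force $T(H^\pm)\triangle H^\pm$ to be unbounded, contradicting the finiteness of $\dis{\tilde{\mathcal E}_m}{T(\mathcal L_1)}$), this unwinds to $|\tilde E_m\triangle(L_1+h_m e_1)|\to 0$ for suitable $h_m\in\R$. Replacing $E_m$ by its horizontal translate, still a minimizer by translation invariance of $\Fun_\gamma$, then yields \eqref{eq:nonlocal-3}.
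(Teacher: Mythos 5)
Your overall architecture --- the scaling identity, comparison with $\lens$, the deficit bound $\Fun_0(\widetilde E_m)-\mu_0\leq\gamma m^{(3-\alpha)/2}N(L_1)$ obtained by discarding the nonlocal energy of $\widetilde E_m$, and the final application of Theorem~\ref{thm:stability-lens} after reducing the relevant isometries to horizontal translations --- coincides with the paper's, and these steps are correct (the paper bounds the deficit instead through the $L^1$-Lipschitz continuity of the nonlocal term, comparing $\Fun_0(E_m)-\Fun_0(\lens)$ with $\gamma|\asymm{E_m}{\lens}|$, which yields the sharper rate $|\asymm{\widetilde E_m}{L_1}|\lesssim\sqrt m$; your weaker rate $m^{(3-\alpha)/4}$ still suffices for \eqref{eq:nonlocal-3}).

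The genuine gap is the step you yourself flag as the main obstacle: the uniform containment $\widetilde E_m\subset B_{R_0}$ after a horizontal translation. As written, your argument is circular: the $L^1$-closeness of a translate of $\widetilde E_m$ to $L_1$, which you want to feed into the density-estimate bootstrap, is claimed to follow from ``lower semicontinuity of $\Fun_0$ on bounded sequences (Remark~\ref{rmk:nonlocal-1}) and uniqueness in \eqref{nl:lens-min}''; but that compactness/semicontinuity argument is only available once the sets are already known to lie in a common ball (or once escape and splitting of mass are excluded), which is exactly what is to be proved. Nor can quasi-minimality and density estimates alone start the bootstrap: a union of two lens-like pieces of comparable mass placed far apart along $H$ satisfies uniform density bounds and is not $L^1$-close to any single translate of $L_1$. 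The paper closes this with an elementary observation you should insert: a minimizer cannot be written as $E_m=A\cup B$ with $|A|,|B|>0$ and $\per(E_m)=\per(A)+\per(B)$, because translating one piece horizontally far away leaves $\Fun_0$ unchanged and strictly decreases the nonlocal term; connectedness together with the equibounded perimeter of $\widetilde E_m$ (via $\Fun_0(E)\geq\tfrac12\per(E)$) then gives an $m$-independent diameter bound for connected planar sets, hence the desired containment. With this (or an equivalent no-splitting argument for minimizers, in the spirit of the subadditivity you invoke for existence), and with the existence step carried out in detail along Kn\"upfer--Muratov lines as in the paper, the rest of your proof goes through.
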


\begin{proof}
This type of results is by now quite standard for nonlocal isoperimetric problems. We adapt in particular the 2-dimensional existence argument by Kn\"upfer and Muratov for Gamow's liquid drop model, see \cite[Theorem~2.2]{KnuMur13}. We assume in the following $m\leq1$, and we denote along the proof by $C$ a generic, positive constant, depending only on $\alpha$ and $\gamma$, which might change from line to line.

\smallskip
\noindent\textit{Step 1: existence for small $m$.}
Let $(E_k)_k$ be a minimizing sequence for the minimum problem \eqref{nl:min}, and assume without loss of generality that each $E_k$ is the union of finitely many disjoint, open and smooth connected components $E_k = \bigcup_{i=1}^{N_k}E_{k,i}$, $N_k\in\N$, ordered so that $|E_{k,1}|\geq|E_{k,2}|\geq\ldots\geq|E_{k,N_k}|>0$. We can assume that, for $k$ large,
\begin{equation} \label{nl:proof1}
\Fun_\gamma(E_k) \leq \Fun_{\gamma}(\lens)
= \mu_0\sqrt{m} + \gamma m^{\frac{4-\alpha}{2}}\int_{L_1}\int_{L_1}\frac{1}{|x-y|^\alpha}\dd x \dd y
= \mu_0\sqrt{m} \Bigl( 1 + C m^{\frac{3-\alpha}{2}}\Bigr)
\end{equation}
(or else $\lens$ would already be a minimizer), where we used \eqref{nl:lens-min2}.

Suppose now $N_k>1$, so that $|E_{k,i}|\leq\frac{m}{2}$ for all $i=2,\ldots,N_k$. By minimality of $\lens$ for the local functional $\Fun_0$ and positivity of the nonlocal term we have for all $i\in\{1,\ldots,N_K\}$
\begin{equation} \label{nl:proof2}
\Fun_\gamma(E_k) \geq \Fun_0(E_{k,i}) + \Fun_0(E_k\setminus E_{k,i})
\xupref{nl:lens-min2}{\geq} \mu_0\bigl(|E_{k,i}|^{1/2} + (m-|E_{k,i}|)^{1/2}\bigr).
\end{equation}
By combining \eqref{nl:proof1} and \eqref{nl:proof2}, squaring both sides, we find for $i=2,\ldots,N_k$
\begin{align*}
    2|E_{k,i}|^{1/2}(m-|E_{k,i}|)^{1/2} \leq Cm^{4-\alpha} + C m ^{\frac{5-\alpha}{2}} \leq C m^{\frac{5-\alpha}{2}},
\end{align*}
from which it follows, using also $|E_{k,i}|\leq\frac{m}{2}$,
\begin{equation} \label{nl:proof3}
    |E_{k,i}| \leq C m^{4-\alpha} \qquad\text{for all }i=2,\ldots, N_k.
\end{equation}
Define the set
\begin{equation*}
  F_k\defeq \lambda (E_k\setminus E_{k,N_k}), \qquad\text{with }\lambda\defeq\biggl(\frac{m}{m-|E_{k,N_k}|}\biggr)^\frac12 \in (1,\sqrt{2}],
\end{equation*}
so that $|F_k|=m$. We have
\begin{align*}
    \Fun_{\gamma}(F_k) - \Fun_{\gamma}(E_k)
    & = \lambda\Bigl[\per(E_k)-\Hone((E_k)_*\cap H)\Bigr] - \lambda\Bigl[\per(E_{k,N_k})-\Hone((E_{k,N_k})_*\cap H)\Bigr] \\
    & \qquad  + \gamma\lambda^{4-\alpha}\int_{F_k}\int_{F_k}\frac{1}{|x-y|^\alpha}\dd x \dd y - \Fun_{\gamma}(E_k) \\
    & \leq (\lambda^{4-\alpha}-1)\Fun_{\gamma}(E_k) - \lambda \Fun_0(E_{k,N_k}) \\
    & \leq (\lambda^{4}-1)\Fun_{\gamma}(E_k) - \Fun_0(E_{k,N_k}) \\
    & \leq \frac{6}{m}|E_{k,N_k}|\Fun_{\gamma}(E_k) - \mu_0 |E_{k,N_k}|^{1/2} \\
    & \leq \mu_0 |E_{k,N_k}|^{1/2}\Bigl(C\Fun_\gamma(E_k)m^{\frac{2-\alpha}{2}} - 1 \Bigr),
\end{align*}
where we used in particular \eqref{nl:lens-min2} in the third inequality, and \eqref{nl:proof3} in the last one. Since $\Fun_{\gamma}(E_k)$ is uniformly bounded by a constant depending only on $\alpha$ and $\gamma$ (recall that $m\leq1$), we can find $m_0\in(0,1)$, also depending only on $\alpha$ and $\gamma$, such that the previous quantity is negative for all $m\in(0,m_0)$ and for all $k$. Therefore $\Fun_{\gamma}(F_k) < \Fun_{\gamma}(E_k)$, that is, by removing the last connected component of $E_k$ and rescaling we reduce the energy. By iterating this argument $N_k-1$ times, removing a connected component at each step, we replace $E_k$ by a connected set $G_k$ such that $|G_k|=m$ and $\Fun_{\gamma}(G_k)\leq \Fun_{\gamma}(E_k)$.

In particular, $(G_k)_k$ is a minimizing sequence for \eqref{nl:min} made of connected sets. Moreover, the sets $G_k$ have equibounded perimeter: this follows from the estimate
\begin{equation} \label{nl:proof4}
    \Fun_0(E) \geq \frac12\per(E)
\end{equation}
for every set of finite perimeter $E$, which can be proved as follows. By \cite[Proposition~19.22]{Maggi} one has $\per(F;H^\pm)>\per(F;H)$ for all $F\subset H^\pm$ with finite perimeter and finite measure. Hence, using also \cite[Theorem~16.3]{Maggi},
\begin{align*}
    \per(E)
    & = \per(E;H^+) + \per(E;H^-) + \Hone(\partial^*E\cap H) \\
    & \geq \per(E\cap H^+;H) + \per(E\cap H^-;H) + \Hone(\partial^*E\cap H) \\
    & = 2\Hone(E^{(1)}\cap H) + 2\Hone(\partial^*E\cap H)
    = 2 \Hone(E_*\cap H),
\end{align*}
from which the estimate \eqref{nl:proof4} follows.

Since we are in dimension $d=2$ and $(G_k)_k$ are connected sets with uniformly bounded perimeter, we have that $\sup_k\mathrm{diam} (G_k)<\infty$ \cite[Remark 12.28]{Maggi}. By applying horizontal translations (notice that the functional $\Fun_\gamma$ is invariant with respect to horizontal translations) we can then assume that the sets $G_k$ are contained in a fixed ball of large radius. A standard compactness argument, combined with the lower semicontinuity of $\Fun_{\gamma}$ with respect to $L^1$-convergence (see Remark~\ref{rmk:nonlocal-1}), yields the existence of a minimizer in \eqref{nl:min} for all $m\in(0,m_0)$.

\smallskip
\noindent\textit{Step 2: convergence to the lens.}
Let $E_m$ be a minimizer in \eqref{nl:min} for $m\in(0,m_0)$, and define $\widetilde{E}_m\defeq \frac{1}{\sqrt{m}}E_m$, so that $|\widetilde{E}_m|=1$.

Notice that every minimizer $E_m$ is necessarily connected, in the sense that it cannot be written as disjoint union $E_m=A\cup B$ of two sets of positive Lebesgue measure in such a way that $\per(E_m)=\per(A)+\per(B)$. Indeed, if not then one could horizontally translate one of the two components far apart from the other, without changing the local energy $\Fun_0$ but strictly decreasing the nonlocal energy.

The sets $\widetilde{E}_m$ have equibounded perimeter, since by scaling and comparing with $\lens$
\begin{align*}
    \per(\widetilde{E}_m)
    & = \frac{1}{\sqrt{m}}\per(E_m)
    \xupref{nl:proof4}{\leq}\frac{2}{\sqrt{m}}\Fun_0(E_m)
    \leq \frac{2}{\sqrt{m}}\Fun_\gamma(E_m)
    \leq \frac{2}{\sqrt{m}}\Fun_\gamma(\lens)
    \leq 2\Fun_\gamma(L_1).
\end{align*}
Therefore, by connectedness, there exists $R_0>0$ such that $\sup_{m <m_0}\mathrm{diam}(\widetilde{E}_m)\leq R_0$. Associate with $\widetilde{E}_m$ the partition $\E_m\defeq(\widetilde{E}_m,H^+\setminus\widetilde{E}_m,H^-\setminus\widetilde{E}_m)$, and notice that $\E_m\in\comp_{R_0}(\mathcal{L}_1)$. By applying an horizontal translation, we can assume that
\begin{equation*}
    \Asymm{\E_m}{\mathcal{L}_1} = \dis{\E_m}{\mathcal{L}_1} = |\asymm{\widetilde{E}_m}{L_1}|.
\end{equation*}
Then by the stability of $\mathcal{L}_1$ proved in Theorem~\ref{thm:stability-lens} we have
\begin{align*}
    \kappa_{1,R_0}|\asymm{\widetilde{E}_m}{L_1}|^2
    & \leq \per(\E_m;B_{R_0}) - \per(\mathcal{L}_1;B_{R_0})
    \xupref{nl:fun-2}{=} \Fun_0(\widetilde{E}_m) - \Fun_0(L_1) \\
    & = \frac{1}{\sqrt{m}} \Bigl( \Fun_0(E_m) - \Fun_0(\lens) \Bigr) \\
    & \leq \frac{\gamma}{\sqrt{m}} \biggl( \int_{\lens}\int_{\lens}\frac{1}{|x-y|^\alpha}\dd x \dd y - \int_{E_m}\int_{E_m}\frac{1}{|x-y|^\alpha}\dd x \dd y \biggr) \\
    & \leq \frac{C\gamma}{\sqrt{m}}|\asymm{E_m}{\lens}|
    = C\gamma\sqrt{m}|\asymm{\widetilde{E}_m}{L_1}|,
\end{align*}
where the second inequality follows by minimality of $E_m$, and the third inequality is a standard Lipschitz continuity estimate of the nonlocal energy, see for instance \cite[Equation~(3.2)]{KnuMur13}. Hence \eqref{eq:nonlocal-3} follows.
\end{proof}

\begin{remark} \label{rmk:gen-min}
For values of $m$ above a suitable threshold, it is expected that minimizers of \eqref{nl:min} fail to exist, since the nonlocal part of the energy becomes dominant and can be decreased by splitting a set into two parts and moving them far apart from each other (by horizontal translations). However, it is possible to prove existence of \emph{generalized minimizers} for all $m$, following for instance the approach in \cite{NovPra21} for a general Gamow's model. By \emph{generalized minimizer} of \eqref{nl:min} we mean a collection of sets of finite perimeter $(E_1,\ldots,E_M)$, $M\in\N$, such that
\begin{equation*}
    \sum_{i=1}^M|E_i|=m \qquad\text{and}\qquad \inf\Bigl\{ \Fun_\gamma(E) \,:\, E\subset\R^2,\, |E| = m \Bigr\}=\sum_{i=1}^M\Fun_\gamma(E_i).
\end{equation*}
We also remark that one can prove analogous results for more general kernels in the nonlocal energy (as those considered in \cite{NovPra21}). We will not further investigate these problems here.
\end{remark}


\appendix
\section{A ``volume-fixing variation'' lemma} \label{sect:appendix}

We state here a variant of a key result in the theory of minimizing clusters (see for instance \cite[Section~29.5]{Maggi}), which allows to exchange volumes between the different chambers of a given partition through local deformations, with a control on the corresponding perimeter variation. For our purposes (see in particular the proof of Theorem~\ref{thm:selection-principle}), we need to make sure that the perturbation is compactly supported in a fixed open set.

\begin{lemma}[Volume-fixing variation] \label{lem:volume-fixing}
Let $\E_0=(\E_0(1),\ldots,\E_0(N))$ be a locally isoperimetric partition in $\R^2$, and let $O_{R}$ be the set constructed in Definition~\ref{def:eye}. Let also $T:\R^2\to\R^2$ be an isometry such that $T(\E_0(i))\subset O_{R}$ for all $i\in I_{\E_0}$.

Then there exist constants $C_0>0$, $\e_0>0$, and $r_0>0$ (depending on $T(\E_0)$ and $R$) with the following property.

If $\E$ and $\G$ are $N$-partitions such that $\dis{\E}{T(\E_0)}<\e_0$ and $\asymm{\G(i)}{\E(i)}\ssubset B_{r_0}(x)\ssubset B_{R+1}$ for some $x\in\R^2$, then there exists a $N$-partition $\widetilde{\G}$ such that:
\begin{enumerate}\setlength\itemsep{5pt}
    \item $\asymm{\widetilde{\G}(i)}{\G(i)} \ssubset O_{R} \setminus\overline{B_{r_0}(x)}$ for all $i\in\{1,\ldots,N\}$,
    \item $|\widetilde{\G}(i)|=|\E(i)|$ for all $i\in\{1,\ldots,N\}$,
    \item $|\per(\widetilde{\G};B_{R+1})-\per(\G;B_{R+1})| \leq C_0\per(\E;B_{R+1})\sum_{i\in I_{\E_0}}\big| |\G(i)|-|\E(i)| \big|$,
    \item $|\dis{\widetilde{\G}}{\E}-\dis{\G}{\E}| \leq C_0\per(\E;B_{R+1})\sum_{i\in I_{\E_0}}\big| |\G(i)|-|\E(i)| \big|$.
\end{enumerate}
\end{lemma}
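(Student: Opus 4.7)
The plan is to adapt the classical volume-fixing construction for minimizing clusters (see \cite[Section~29.5]{Maggi}) with the additional requirement that the variation be supported inside $O_R \setminus \overline{B_{r_0}(x)}$. Set $\widetilde{\E}_0 \defeq T(\E_0)$. I would build a finite family $\{X_1,\dots,X_K\}$ of smooth, compactly supported vector fields attached to interface points of $\widetilde{\E}_0$ chosen away from $B_{r_0}(x)$, and apply to $\G$ the flow of a suitable combination $\sum_k t^*_k X_k$, with $t^*$ determined by an implicit function argument so as to enforce the volume constraint (ii). Properties (iii) and (iv) would then follow from the standard transport-formula estimates for perimeter and Lebesgue measure along such flows.

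For the construction, recall that by Theorem~\ref{thm:regularity} the set $\partial\widetilde{\E}_0$ is a finite union of $C^{2,1}$ arcs meeting at $120^\circ$ vertices and is connected. In particular, for $r_0>0$ small enough (depending only on $\widetilde{\E}_0$) and for every $x \in \R^2$, every nonempty interface $\widetilde{\E}_0(i,j) \cap O_R$ contains a regular point $q_{ij}$ at distance at least $2r_0$ from both $\overline{B_{r_0}(x)}$ and $\Sigma(\widetilde{\E}_0) \cup \partial O_R$. Around each such $q_{ij}$ I would choose a smooth vector field $X_{ij}$ supported in $B_{r_0/4}(q_{ij})$, pointing along $\nu_{\widetilde{\E}_0(i)}$ near $q_{ij}$, normalized so that $\int_{\widetilde{\E}_0(i,j)} X_{ij}\cdot\nu_{\widetilde{\E}_0(i)} \dd\Hone = 1$. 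The supports of these fields are pairwise disjoint and contained in $O_R \setminus \overline{B_{r_0}(x)}$. Since the interface graph of $\widetilde{\E}_0$ is connected, a subfamily $\{X_1,\dots,X_K\}$ of the $X_{ij}$ can be selected so that the induced volume-derivative map spans the hyperplane $\{\delta \in \R^N : \sum_i \delta_i = 0\}$, which is the only constraint on admissible corrections since $\G$ and $\E$ are partitions differing on a bounded set.

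Let $\Phi^t$ denote the time-$1$ flow of $\sum_k t_k X_k$. Since $\Phi^t \equiv \id$ on $B_{r_0}(x)$ and $\G = \E$ outside $B_{r_0}(x)$, the map $v: t \mapsto (|\Phi^t(\G)(i)|-|\G(i)|)_{i\in I_{\E_0}}$ coincides with $t \mapsto (|\Phi^t(\E)(i)|-|\E(i)|)_i$, and its Jacobian at $t=0$ has entries $(Dv(0))_{i,k} = \int_{\partial^*\E(i)} X_k \cdot \nu_{\E(i)} \dd\Hone$. By the hypothesis $\dis{\E}{\widetilde{\E}_0} < \e_0$ combined with a standard localization/trace argument, for $\e_0$ small these entries are close to the corresponding values computed at $\widetilde{\E}_0$, so $Dv(0)$ admits a right inverse of norm bounded in terms of $\widetilde{\E}_0$ and $R$ only. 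The implicit function theorem would then produce $t^* = t^*(\G)$ with $|t^*| \leq C \sum_{i\in I_{\E_0}} \big||\G(i)|-|\E(i)|\big|$ such that $\widetilde{\G} \defeq \Phi^{t^*}(\G)$ satisfies $|\widetilde{\G}(i)|=|\E(i)|$ for every $i$, proving (i) and (ii).

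For (iii) and (iv), the standard transport formula applied to $s \mapsto \per(\Phi^s(\G); B_{R+1})$ and to $s \mapsto |\Phi^s(\G)(i) \triangle \G(i)|$, combined with the uniform bound $\|X_k\|_{C^1} \leq C(\widetilde{\E}_0,R)$, yields the Lipschitz-type estimate
\[
|\per(\widetilde{\G}; B_{R+1})-\per(\G; B_{R+1})| + |\dis{\widetilde{\G}}{\E}-\dis{\G}{\E}| \leq C |t^*| \per(\G; B_{R+1}).
\]
Since $\G = \E$ on $B_{R+1} \setminus B_{r_0}(x)$ and the flow is the identity on $B_{r_0}(x)$, we may replace $\per(\G; B_{R+1})$ by $\per(\E; B_{R+1})$ on the right-hand side, obtaining the desired bounds with $C_0 = C$. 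The main technical obstacle will be the uniform invertibility of $Dv(0)$ as both $x$ and $\G$ vary: this is handled by letting the anchors $q_{ij}$ depend on $x$ (which is possible because each interface $\widetilde{\E}_0(i,j)\cap O_R$ has $\Hone$-measure bounded away from zero), and by exploiting the $L^1$-closeness of $\E$ to $\widetilde{\E}_0$ to transfer the invertibility from the reference partition to $\E$.
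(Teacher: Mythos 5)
Your proposal is correct in substance and uses the same underlying machinery as the paper: the paper's proof simply invokes the classical volume-fixing variation of \cite[Theorem~29.14, Remark~29.15 and Corollary~29.17]{Maggi} (with estimate (iv) taken from \cite[Appendix~B]{CicLeoMag16}), and that construction is exactly the localized-flow/flux-matrix/implicit-function argument you reconstruct. The one real difference is how the deformation is kept away from $\overline{B_{r_0}(x)}$: the paper, following Maggi, fixes once and for all \emph{two} disjoint finite families of interface points inside $O_R$, so that for $r_0$ small any ball $B_{r_0}(x)$ misses at least one family entirely and the variation is performed in that family; you instead let the anchors $q_{ij}$ move with $x$. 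Your variant works, and you correctly flag that its cost is uniformity in $x$ of the invertibility of the volume-derivative map --- which does hold, since each field meets $\partial T(\E_0)$ only in the single interface through $q_{ij}$ and has unit normalized flux, so the Jacobian at $T(\E_0)$ is an anchor-independent signed incidence matrix of the adjacency graph --- whereas the two-family trick gives this uniformity for free. Two points to tighten: the closeness of the Jacobian entries at $\E$ to those at $T(\E_0)$ should be justified via the Gauss--Green identity $\int_{\partial^*\E(i)}X_k\cdot\nu_{\E(i)}\dd\Hone=\int_{\E(i)}\dive X_k\dd x$ (bulk integrals are continuous under $L^1$ convergence; a trace-type argument is not), and what you actually need is surjectivity of the volume map onto the coordinates indexed by $I_{\E_0}$ only, which follows from your hyperplane-spanning claim precisely because at least one region is infinite and absorbs the balance; finally, the quantitative implicit function step requires the admissible volume discrepancy (at most $|B_{r_0}|$) to lie below the invertibility threshold, which is the same bookkeeping carried out in \cite[Corollary~29.17]{Maggi} and should be stated when fixing $r_0$.
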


\begin{proof}
This result is proved in \cite[Corollary~29.17]{Maggi} for clusters in $\R^d$. The presence of multiple regions with infinite measure in our case does not affect the proof, as their volumes do not need to be preserved and, furthermore, everything is localized in a large ball $B_{R+1}$. We need only to enforce the condition $\asymm{\widetilde{\G}(i)}{\G(i)} \ssubset O_{R}$.

The proof is based on \cite[Theorem~29.14]{Maggi}, where one selects two finite families $\{y_\alpha\}_{\alpha=1}^M$ and $\{z_\alpha\}_{\alpha=1}^M$ of interface points of $T(\E_0)$, and constructs the required perturbation by modifying $\G$ either in $\bigcup_{\alpha=1}^M B_{\e_1}(y_\alpha)$ or in $\bigcup_{\alpha=1}^M B_{\e_1}(z_\alpha)$, for some $\e_1>0$, by means of suitable diffeomorphisms.

Therefore, to ensure that the part outside $O_R$ is unchanged, we only need to make sure that it is possible to choose the points $y_\alpha$ and $z_\alpha$ inside $O_R$. This is guaranteed by \cite[Remark~29.15]{Maggi}, which only requires that $|T(\E_0(i))\cap O_R|>0$ for all $i\in\{1,\ldots,N\}$.

With these considerations, the proof can be adapted to deal with our situation. The only condition not addressed in \cite{Maggi} is the estimate (iv). Its proof is discussed in \cite[Appendix~B]{CicLeoMag16}.
\end{proof}

\bigskip
\subsection*{Acknowledgments}
MB is member of the GNAMPA group of INdAM. MB is partially supported by the project PRIN 2022PJ9EFL ``Geometric Measure Theory: Structure of Singular Measures, Regularity Theory and Applications in the Calculus of Variations'', funded by the European Union - Next Generation EU, Mission 4 Component 2 - CUP:E53D23005860006. RC was partially supported under NWO-OCENW.M.21.336, MATHEMAMI - Mathematical Analysis of phase Transitions in HEterogeneous MAterials with Materials Inclusions. IT was partially supported by the Simons Foundation (851065), by the NSF-DMS (2306962), and by the Alexander von Humboldt Foundation.

\end{document}